\theoremstyle{plain}
\newtheorem{theorem}[equation]{Theorem}
\newtheorem{lemma}[equation]{Lemma}
\newtheorem{corollary}[equation]{Corollary}
\newtheorem{proposition}[equation]{Proposition}
\newtheorem{claim}[equation]{Claim}
\theoremstyle{definition}
\newtheorem{definition}[equation]{Definition}
\theoremstyle{remark}
\newtheorem{remark}[equation]{Remark}
\numberwithin{equation}{section}
\newcommand{\eps}{\varepsilon}
\newcommand{\dint}{\iint}
\newcommand{\dist}{\operatorname{dist}}
\newcommand{\dv}{\operatorname{div}}
\newcommand{\re}{\mathbb{R}}
\newcommand{\rn}{\mathbb{R}^n}
\newcommand{\reu}{\mathbb{R}^{n+1}_+}
\newcommand{\ree}{\mathbb{R}^{n+1}}
\newcommand{\A}{\Lambda}
\newcommand{\dd}{\mathbb{D}}
\newcommand{\C}{\mathcal{C}}
\newcommand{\om}{\Omega}
\newcommand{\F}{\mathcal{F}}
\newcommand{\K}{\mathcal{K}}
\newcommand{\M}{\mathcal{M}}
\newcommand{\W}{\mathcal{W}}
\newcommand{\tU}{\widetilde{U}}
\newcommand{\tB}{\widetilde{B}}
\newcommand{\B}{\mathcal{B}}
\newcommand{\oo}{O}
\newcommand{\G}{\mathcal{G}}
\newcommand{\mut}{\mathfrak{m}}
\newcommand{\mutt}{\widetilde{\mathfrak{m}}}
\newcommand{\pom}{\partial\Omega}
\newcommand{\hm}{\omega}
\newcommand{\bqo}{B_Q^{\rm big}}
\newcommand{\dqo}{\Delta_Q^{\rm big}}
\renewcommand{\P}{\mathcal{P}}
\newcommand{\oT}{\widetilde{\Omega}}
\renewcommand{\emptyset}{\mbox{\textup{\O}}}
\DeclareMathOperator{\supp}{supp}
\DeclareMathOperator{\diam}{diam}
\DeclareMathOperator{\interior}{int}
\begin{document}
\allowdisplaybreaks

\title[Uniform rectifiability, harmonic and $p$-harmonic measure]{The weak-$A_\infty$ property of harmonic and $p$-harmonic measures
implies uniform rectifiability.}

\author{Steve Hofmann}

\address{Steve Hofmann
\\
Department of Mathematics
\\
University of Missouri
\\
Columbia, MO 65211, USA} \email{hofmanns@missouri.edu}

\author{Phi Le}

\address{Phi Le
\\
Department of Mathematics
\\
University of Missouri
\\
Columbia, MO 65211, USA} \email{llc33@mail.missouri.edu}

\author{Jos\'e Mar{\'\i}a Martell}

\address{Jos\'e Mar{\'\i}a Martell\\
Instituto de Ciencias Matem\'aticas CSIC-UAM-UC3M-UCM\\
Consejo Superior de Investigaciones Cient{\'\i}ficas\\
C/ Nicol\'as Cabrera, 13-15\\
E-28049 Madrid, Spain} \email{chema.martell@icmat.es}

\author{Kaj Nystr\"om}
\address{Kaj Nystr\"{o}m\\Department of Mathematics, Uppsala University\\
S-751 06 Uppsala, Sweden}
\email{kaj.nystrom@math.uu.se}

\thanks{The first author was supported by NSF grant DMS-1361701.
The third author was supported by ICMAT Severo Ochoa project SEV-2011-0087. He also acknowledges that the research leading to these results has received funding from the European Research Council under the European Union's Seventh Framework Programme (FP7/2007-2013)/ ERC agreement no. 615112 HAPDEGMT}

\date{\today}
\subjclass[2000]{31B05, 31B25, 35J08, 42B25, 42B37, 28A75, 28A78}
\keywords{
Harmonic measure and $p$-harmonic measure, Poisson kernel, uniform rectifiability, Carleson measures, Green function,
weak-$A_\infty$.}

\begin{abstract}
Let $E\subset \ree$, $n\ge 2$, be an Ahlfors-David regular set of dimension $n$.
We show that the weak-$A_\infty$ property of harmonic measure, for the open set
$\Omega:= \ree\setminus E$, implies uniform rectifiability of $E$.
More generally, we establish a similar result for the Riesz measure, $p$-harmonic measure,
associated to the $p$-Laplace operator, $1<p<\infty$.

\end{abstract}

\vskip-2cm

\maketitle

\vskip-1.5cm\null

{\small
\tableofcontents}

\vskip-1cm\null

\section{Introduction}\label{s1}

In this paper we prove quantitative, scale invariant results of free boundary type, for harmonic measure and, more generally, for $p$-harmonic measure. More precisely, let $\om\subset \ree $ be an open set (not necessarily connected nor bounded) satisfying an interior Corkscrew condition, whose boundary is $n$-dimensional Ahlfors-David regular (ADR) (see Definition \ref{defadr}).
Given these background hypotheses we prove, see Theorem \ref{t1} and Corollary \ref{c1} below, that if $\hm$, the harmonic measure for $\Omega$,  is absolutely continuous with respect to $\sigma$, and if the Poisson kernel $k=d\hm/d\sigma$ verifies an appropriate scale invariant higher integrability estimate (in particular, if $\hm$ belongs to weak-$A_\infty$ with respect to $\sigma$),
then $\pom$ is uniformly rectifiable in the sense of \cite{DS1,DS2}. In particular, our background hypotheses hold in the case that $\om:=\ree\setminus E$ is the
complement of an ADR set of co-dimension 1, as in that case it is well known that
the Corkscrew condition is verified automatically in $\om$,  i.e., in every ball $B=B(x,r)$
centered on $E$, there is some component of $\om\cap B$ that contains a point $Y$ with
$\dist(Y,E)\approx r$. Furthermore, our argument is general enough to allow us to establish a non-linear version of Theorem \ref{t1}, see Theorem \ref{t3} below,
involving the $p$-Laplace operator, $p$-harmonic functions and $p$-harmonic measure.

To briefly outline previous work, in \cite{HMU} the first and third authors, together with I. Uriarte-Tuero, proved the same result (cf. Theorem \ref{t1} and Corollary \ref{c1}) under the additional strong hypothesis that $\om$ is a connected domain, satisfying an interior Harnack Chain condition.  In hindsight, under that extra
assumption, one obtains the stronger conclusion that the exterior domain $\ree\setminus \overline{\om}$ in fact also satisfies a
Corkscrew condition, and hence that $\om$ is an NTA domain in the sense of \cite{JK}, see \cite{AHMNT} for the details. Compared to \cite{HMU} the main new advances in the present paper are two. First, the removal of any
connectivity hypothesis, in particular, we avoid
the Harnack Chain condition. Second, we are able to establish a version of our results also in the non-linear case $1<p<\infty$.
Our main results, Theorem \ref{t1}, Corollary \ref{c1} and Theorem \ref{t3}, are new even in the linear case $p=2$. 

Our approach is decidedly influenced by prior work of Lewis and Vogel \cite{LV-2006}, \cite{LV-2007}. In particular, in \cite{LV-2007} the authors proved  a version of Theorem \ref{t3}, and Theorem \ref{t1}, under the stronger hypothesis that $p$-harmonic measure $\mu$ itself is an Ahlfors-David regular measure which in the linear case $p=2$ implies that
the Poisson kernel is a bounded, accretive function, i.e., $k\approx 1$.    However, to weaken the hypotheses on $\hm$ and $\mu$,
as we have done here, requires further considerations that we discuss below in Subsection \ref{ss1.2}.

To provide some additional context, we mention that out results here
may be viewed as  ``large constant'' analogues of  results of
Kenig and Toro \cite{KT3}, in the linear case $p=2$,
and of J. Lewis and the fourth named author of the present paper
\cite{LN}, in the general $p$-harmonic case $1<p<\infty$.  These authors show
that in the presence of a Reifenberg flatness
condition and Ahlfors-David regularity,  $\log k \in VMO$ implies that the unit normal $\nu$ to the boundary  belongs to
$VMO$, where $k$ is the Poisson kernel with pole at some fixed point (resp., the density of $p$-harmonic Riesz measure associated to a particular ball $B(x,r)$).
Moreover, under the same background hypotheses,
the condition that $\nu \in VMO$ is equivalent to
a uniform rectifiability (UR) condition with vanishing trace,
thus $\log k \in VMO \implies vanishing \,\, UR,$ given sufficient Reifenberg flatness. On the other hand, our
large constant version ``almost'' says  ``$\,\log k \in BMO\implies UR\,$''.
Indeed,  it is well known that the $A_\infty$ condition, i.e.,  weak-$A_\infty$ plus the doubling property, implies that $\log k \in BMO$, while
if $\log k \in BMO$ with small norm, then $k\in A_\infty$.  We further note that, in turn, the results of \cite{KT3}
may be viewed as an ``endpoint" version of the free boundary results of \cite{AC} and \cite{Je},
which say, again in the presence of Reifenberg flatness, that H\"older continuity of
$\log k$ implies that of the unit normal $\nu$ (and indeed, that $\pom$ is of class
$C^{1,\alpha}$ for some $\alpha>0$).

\subsection{Statement of main results}  

Given an open set $\om\subset \ree$, and a Euclidean ball $B=B(x,r)\subset \ree$, 
centered on $\pom$, we let $\Delta =\Delta(x,r):= B\cap\pom$ 
denote the corresponding surface ball.  
For $X\in \om$, let $\hm^X$ be
harmonic measure for $\om$, with pole at $X$.
As mentioned above,
all other terminology and notation will be defined below.

Concerning the Laplace operator and harmonic measure we prove the following results.

\begin{theorem}\label{t1} Let $\om\subset \ree$, $n\ge 2$, be an 
open set, whose boundary is Ahlfors-David regular of dimension $n$ (see Definition \ref{defadr}).
Suppose that there are positive constants $C_0$ and $c_0$, 
and an exponent $q>1$, such that for every 
surface ball $\Delta=\Delta(x,r)$,
with $x\in \pom$ and $0<r<\diam(\pom)$, there exists $X_{\Delta}\in  B(x,r)\cap\Omega$, 
with $\dist(X_\Delta,\pom)\ge \, c_0r$, satisfying
\begin{list}{$(\theenumi)$}{\usecounter{enumi}\leftmargin=.8cm
\labelwidth=.8cm\itemsep=0.2cm\topsep=.1cm
\renewcommand{\theenumi}{\star}}
\item \textbf{Scale-invariant higher integrability:} 
$\hm^{X_\Delta}\ll \sigma$ in  $2\Delta$, 
and $k^{X_\Delta}:=\frac{d\hm^{X_\Delta}}{d\sigma}$ satisfies
\begin{equation}\label{eqn:main-SI}
\int_ {2\Delta} 
k^{X_\Delta}(y)^q\,d\sigma(y)
\le
C_0\,   \sigma(\Delta)^{1-q} \,.
\end{equation}
\end{list}
Then $\pom$ is uniformly rectifiable and moreover the ``UR character'' (see Definition \ref{defurchar}) depends only on $n$, the ADR constants, $q$, $c_0$, and $C_0$.
\end{theorem}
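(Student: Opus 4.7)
The plan is to prove the theorem by adapting the corona/stopping-time scheme of \cite{HMU} to the setting where $\Omega$ may fail the Harnack chain condition, and ultimately to verify one of the Carleson-measure characterizations of uniform rectifiability from \cite{DS1,DS2} (for instance the bilateral weak geometric lemma, or the existence of big pieces of chord-arc subdomains). First I would fix a Christ-style dyadic cube structure on $\partial\Omega$, a reference cube $Q_0$, and for each descendant $Q\subseteq Q_0$ use the corkscrew point $X_Q$ supplied by the hypothesis. Because $\Omega$ need not be connected, the measures $\omega^{X_{Q_0}}$ and $\omega^{X_Q}$ cannot be compared directly via Harnack chains; to get around this I would build local sawtooth subregions $\Omega_{\mathcal{F},Q_0}$ out of Whitney cubes associated to coherent stopping-time families $\mathcal{F}$, designed so that each sawtooth is quantitatively NTA and so that $X_{Q_0}$ has non-tangential access inside it to a large portion of $\partial\Omega\cap Q_0$.

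The central step will be a stopping-time construction based on the normalized Poisson density $\sigma(Q)\,k^{X_{Q_0}}$: a descendant $Q$ is marked as a stopping cube when this density becomes too large or too small, or when a quantitative thickness/accessibility condition fails at $Q$. The higher integrability hypothesis \eqref{eqn:main-SI}, combined with a Kolmogorov/reverse-H\"older argument and weak-$A_\infty$ self-improvement, should then yield a Carleson packing bound $\sum_{Q\in\mathcal{F},\,Q\subseteq Q_0}\sigma(Q)\le \eta\,\sigma(Q_0)$ with $\eta<1$. On the complementary ``good'' sawtooth $\Omega_{\mathcal{F},Q_0}$, the density $k^{X_{Q_0}}$ is essentially bounded above and below on the relevant portion of $\partial\Omega$, so one can invoke the Lewis--Vogel machinery of \cite{LV-2006,LV-2007}, originally designed for the case $k\approx 1$, to produce the Carleson-type square-function/$\beta$-number estimate for the associated piece of $\partial\Omega$. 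Extrapolation of Carleson measures over dyadic scales then yields big pieces of UR sets at every scale and location, so that \cite{DS1,DS2} gives UR of $\partial\Omega$ with the desired character.

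The hard part is precisely what was handed to us for free in \cite{HMU}: without an interior Harnack chain hypothesis, the sawtooth subregions must be constructed so that they simultaneously (i) are NTA with uniform constants (so that the comparison principle and boundary H\"older estimates are available inside them), and (ii) have their boundary quantitatively aligned with $\partial\Omega$ on the region where the hypothesis is to be used, so that $k^{X_{Q_0}}$ for the original domain $\Omega$ can be compared with the Poisson kernel of the sawtooth with the same pole. Carrying out this transference, and ensuring that when $Q$ lies in the coherent regime below $Q_0$ the corkscrew points $X_Q$ and $X_{Q_0}$ can be joined by a quantitative chain inside a common connected component of the sawtooth, is the principal new difficulty and forces a more delicate dyadic/Whitney construction than in \cite{HMU,LV-2007}. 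Once this geometric setup is in place, the nonlinear version (Theorem~\ref{t3}) will follow by recasting the PDE estimates in the variational framework of \cite{LV-2006,LV-2007}, with $p$-harmonic Riesz measure and its Poisson-type density playing the roles of their linear counterparts.
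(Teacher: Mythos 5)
Your outline correctly identifies several of the paper's ingredients (a dyadic stopping-time extraction of an ample ``good'' family on which the normalized Poisson density is bounded above and below, the use of the Lewis--Vogel machinery, and a John--Nirenberg/extrapolation step to go from local packing to global UR), but the central geometric step you propose is not what the paper does and would not work in this generality.

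You plan to build local sawtooth subdomains $\Omega_{\mathcal F,Q_0}$ out of Whitney cubes that are ``quantitatively NTA'' and to transfer harmonic measure estimates by comparing $k^{X_{Q_0}}$ for $\Omega$ with the Poisson kernel of the sawtooth. That route is precisely what the absence of a Harnack chain hypothesis forecloses: with only ADR and an interior corkscrew, the Whitney regions $U_Q$ can split into many connected components, and there is no mechanism to connect corkscrew points $X_Q$, $X_{Q_0}$ in $\Omega$ at all, let alone inside a single component of a sawtooth. One cannot manufacture Harnack chains that the domain does not have, so the sawtooths cannot be made NTA and no Poisson-kernel comparison of this kind is available. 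The paper's stopping-time lemma (Lemma \ref{l4.4}) is instead purely measure-theoretic: it selects a family $\mathcal F$ so that $\mu(Q)/\sigma(Q)\approx 1$ for $Q\in\mathbb D_{\mathcal F,Q_0}$, with no claim about the geometry of the associated region. The PDE input is then much more local: via the CFMS-type bound and the Riesz representation of $\mu$, one obtains a point $Y_Q$ in a Whitney region near $Q$ with $|\nabla u(Y_Q)|\gtrsim 1$ (Lemma \ref{l4.1}), and the Lewis--Vogel dichotomy is applied directly to $u$ and $\nabla u$ in $\Omega$: either $\nabla u$ oscillates (packable via an integration-by-parts square-function bound over the fattened Whitney union), or it is nearly constant, in which case the cube satisfies the WHSA condition. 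No auxiliary domain, NTA structure, or comparison principle across domains is invoked.

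A second, related omission: you aim to land on BWGL or big pieces of chord-arc subdomains, but because the harmonic measure here need not be doubling, the Lewis--Vogel Weak Exterior Convexity conclusion is unavailable, and the paper has to introduce the weaker WHSA condition (Definition \ref{def2.14}) and prove separately (Proposition \ref{prop2.20}) that WHSA still implies BAUP and hence UR. This is not a cosmetic detail; without it the final step from the gradient-flatness estimate to uniform rectifiability has no known target criterion. You would need to replace your proposed endpoint with WHSA (or reprove something equivalent) for the argument to close.
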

We note that the point $X_\Delta$ in Theorem \ref{t1} 
is a ``Corkscrew point" for $\Omega$, relative to $\Delta$.   An open set $\Omega$ for which
there is such a point relative to every surface ball $\Delta(x,r)$, $x\in \pom$, $0<r<\diam(\pom)$, with a uniform constant
$c_0$, is said to satisfy the ``Corkscrew condition"
(see Definition \ref{def1.cork}
below).

 \begin{remark}\label{r4.7} We note that, in lieu of absolute continuity and
$(\star)$,  only the following apparently weaker condition is
actually used in the proof of 
Theorem \ref{t1}.
\begin{list}{$(\theenumi)$}{\usecounter{enumi}\leftmargin=.8cm
\labelwidth=.8cm\itemsep=0.2cm\topsep=.1cm
\renewcommand{\theenumi}{\star\star}}
\item \textbf{Local non-degeneracy:} 
there exist uniform constants $\eta,\beta>0$, such that if $A\subset\Delta$ is Borel measurable,
then
\begin{equation}\label{eq1.4}
\sigma(A) \geq (1-\eta) \,\sigma(\Delta) \quad \implies 
\quad \omega^{X_\Delta}(A)\, \geq\, \beta \, \hm^{X_\Delta}(\Delta).\footnote{This formulation is adapted from 
\cite{MT};  see the discussion in Subsection \ref{ss1.4} below.}
\end{equation}
\end{list}
Here, $\Delta=\Delta(x,r)$ with $x\in \pom$ and $0<r<\diam(\pom)$, and $X_{\Delta}\in  B(x,r/2)\cap\Omega$, 
with $\dist(X_\Delta,\pom)\ge \, c_0r/2$.\footnote{For aesthetic reasons, and for convenience in the sequel,
in contrast to condition $(\star)$, we prefer to state condition $(\star\star)$
in terms of $\Delta$ rather than $2\Delta$, and with $X_\Delta \in B(x,r/2)$ rather than $B(x,r)$.} 
We observe that there turns out to be some flexibility in the choice of $X_\Delta$
(see the discussion at the beginning of Section \ref{s4}), and consequently it is not hard to see that
$(\star)$ implies $(\star\star)$;  see Lemma \ref{l3.4}.
\end{remark} 


We also have the following easy corollary of Theorem \ref{t1}
(we shall give the short proof of the corollary in Section \ref{subs4}).

\begin{corollary}\label{c1}  Let $\om\subset \ree$, $n\ge 2$, be an 
open set, satisfying the Corkscrew condition,
whose boundary is Ahlfors-David regular of dimension $n$.  Suppose further that 
for every ball $B= B(x,r)$, $x\in \pom$, $0<r<\diam (\pom)$, 
and for all $Y\in \Omega\setminus B(x,2r)$,
harmonic measure $\hm^Y\in$ weak-$A_\infty(\Delta(x,r))$, i.e., there is a constant $C_0\ge 1$, and an
exponent $q>1$, each of which is
uniform with respect to $x,r$ and $Y$,
such that $\hm^{Y}\ll \sigma$ in $\Delta(x,r)$, and $k^{Y}=d\hm^{Y}/d\sigma$ satisfies
\begin{equation}\label{eqn:main-weak-RHP}
\left(\fint_{\Delta'} k^{Y}(z)^q\,d\sigma(z)\right)^{\frac1q}
\le
C_0\,\fint_{2\Delta'} k^{Y}(z)\,d\sigma(z),
\end{equation}
for every surface ball centered on the boundary $\Delta'=B'\cap \pom$ with $2B'\subset B(x,r)$.
Then $\pom$ is uniformly rectifiable and moreover the ``UR character'', see Definition \ref{defurchar}, \
depends only on $n$, the ADR constant of $\pom$, $q$, $C_0$, and the Corkscrew constant.
\end{corollary}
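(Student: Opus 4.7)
The plan is to deduce the scale-invariant higher integrability hypothesis $(\star)$ of Theorem~\ref{t1} from the weak-$A_\infty$ assumption, and then invoke Theorem~\ref{t1} directly. The conceptual obstacle is the absence of any connectivity or Harnack Chain hypothesis: $(\star)$ requires a Poisson kernel estimate with pole at a \emph{close} Corkscrew point $X_\Delta \in B(x_0, r_0)$, while the weak reverse Hölder bound \eqref{eqn:main-weak-RHP} is stated for poles $Y$ that are \emph{far} from the relevant sub-ball. Without Harnack chains one cannot simply switch between the two, so a more direct route is needed.

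The trick I would exploit is a scale-switching observation: for $\Delta = \Delta(x_0, r_0)$, the Corkscrew condition produces $X_\Delta \in B(x_0, r_0) \cap \Omega$ with $\dist(X_\Delta, \partial\Omega) \ge c_0 r_0$, and although $X_\Delta$ is ``close'' at scale $r_0$, it is automatically ``far'' at any scale $\rho \ll c_0 r_0$. Concretely, for any $x_\star \in \partial\Omega$ and $\rho := c_0 r_0/10$, the estimate $|X_\Delta - x_\star| \ge c_0 r_0 = 10\rho$ places $X_\Delta$ outside $B(x_\star, 8\rho)$. Applying \eqref{eqn:main-weak-RHP} with outer ball $B(x_\star, 4\rho)$ and pole $Y = X_\Delta$ then yields $\hm^{X_\Delta} \ll \sigma$ on $\Delta(x_\star, 4\rho)$ together with the weak reverse Hölder inequality on the sub-surface ball $\Delta(x_\star, \rho)$, whose doubled ball $B(x_\star, 2\rho)$ lies inside $B(x_\star, 4\rho)$.

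With this in hand I would cover $2\Delta$ by $\{\Delta_j := \Delta(x_j, \rho)\}$, where $\{x_j\} \subset 2\Delta$ is a maximal $\rho/2$-separated collection. Ahlfors-David regularity forces the cardinality to be $\lesssim_{c_0, n} 1$, the family $\{2\Delta_j\}$ to have bounded overlap (with constant depending only on $n$), and $\bigcup_j 2\Delta_j \subset 3\Delta$. Applying the previous paragraph to each $x_j$ and converting averages into integrals via ADR,
\begin{equation*}
\int_{\Delta_j} (k^{X_\Delta})^q\, d\sigma \,\lesssim\, \rho^{n(1-q)}\, \hm^{X_\Delta}(2\Delta_j)^q \,\le\, \rho^{n(1-q)}\, \hm^{X_\Delta}(2\Delta_j),
\end{equation*}
where the last step uses $\hm^{X_\Delta}(2\Delta_j) \le 1$ and $q \ge 1$. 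Summing in $j$, invoking bounded overlap and $\hm^{X_\Delta}(\partial\Omega) \le 1$, and substituting $\rho \approx_{c_0} r_0$, produces
$\int_{2\Delta} (k^{X_\Delta})^q\, d\sigma \,\lesssim_{c_0, n, q}\, \sigma(\Delta)^{1-q}$,
which is precisely $(\star)$. Theorem~\ref{t1} then delivers the uniform rectifiability of $\partial\Omega$ with UR character depending only on $n$, the ADR constant, $q$, $C_0$, and $c_0$.

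The hard part, as indicated, is recognizing that one does \emph{not} need Harnack chains or a change-of-pole formula to convert the ``far-pole'' hypothesis into the ``close-pole'' input of Theorem~\ref{t1}; once the scale-switching observation is in place, the remaining work is routine ADR bookkeeping, with a controlled loss of order $c_0^{n(1-q)}$ absorbed into the implicit constant.
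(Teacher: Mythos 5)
Your proposal is correct and follows essentially the same route as the paper's proof: exploit the Corkscrew point $X_\Delta$, which is ``far'' relative to much smaller sub-balls of radius $\approx c_0 r$, cover a dilate of $\Delta$ by boundedly many such sub-balls, apply the weak reverse H\"older inequality on each, and sum using ADR and $\omega^{X_\Delta}\le 1$. The only differences are cosmetic choices of radii and the use of a maximal separated net versus an informal covering, so nothing to flag.
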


\begin{remark}  As mentioned above, the Corkscrew condition is automatically satisfied in the case that
$E$ is an $n$-dimensional ADR set (hence closed, see Definition \ref{defadr} below), and
$\Omega =\ree\setminus E$ is its complement,
with the Corkscrew constant for $\Omega$
depending only on $n$ and the ADR constant of $E$.  Thus, in particular, Corollary \ref{c1} applies in that setting,
so in the presence of the weak reverse H\"older condition \eqref{eqn:main-weak-RHP}, we deduce that
$E$ is uniformly rectifiable.
\end{remark}

Combining Theorem \ref{t1} with the results in \cite {BH},
we obtain as an immediate consequence a ``big pieces"
characterization of uniformly rectifiable sets of co-dimension 1, in terms of
harmonic measure.   Here and in the sequel, given an ADR set $E$,  $Q$ will denote a ``dyadic cube"
on $E$ in the sense of \cite{DS1,DS2} and \cite{Ch}, and $\dd(E)$ will denote the collection
of all such cubes,  see Lemma \ref{lemmaCh} below.

\begin{theorem}\label{t2} Let $E\subset \ree$, $n\ge 2$,  be an $n$-dimensional ADR set.
Let $\om := \ree\setminus E$.
Then $E$ is uniformly rectifiable if and only if it
has ``big pieces of good harmonic measure estimates"
in the  following sense:
for each $Q \in \dd (E)$ there exists an open set $\oT=\oT_Q$ 
with the following properties, with uniform control of the various implicit
constants:
\begin{list}{$\bullet$}{\leftmargin=.8cm
\labelwidth=.8cm\itemsep=0.2cm\topsep=.1cm}
\item $\partial\oT$ is ADR;

\item  the interior Corkscrew condition holds in $\oT$;

\item $\partial\oT$ has a ``big pieces" overlap with $ E$, in the sense that $\sigma(Q\cap \partial\oT) \gtrsim \sigma(Q)$;

\item for each surface ball
$\Delta = \Delta(x,r) := B(x,r) \cap \partial\oT$, with $x \in \partial\oT$ and $r \in (0, \diam(\oT))$;
there is an interior corkscrew point $X_\Delta\in \oT$, such that
$\hm^{X_\Delta}_{\widetilde{\om}}$, the harmonic measure for $\oT$ with pole at $X_\Delta$,
satisfies
$\hm^{X_\Delta}_{\widetilde{\om}}(\Delta) \gtrsim 1$, and
belongs to
weak-$A_\infty(\Delta)$.
\end{list}
\end{theorem}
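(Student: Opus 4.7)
The theorem is an ``if and only if'', so I would handle the two implications separately. Both reduce quickly to tools we already have: one direction is a direct application of Theorem~\ref{t1}, the other is a direct invocation of~\cite{BH}.

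\textbf{Sufficiency (big pieces $\implies$ UR).} Fix $Q\in\dd(E)$ and consider the open set $\oT=\oT_Q$ furnished by the hypothesis. Since $\partial\oT$ is ADR and $\oT$ satisfies the interior Corkscrew condition, the only hypothesis of Theorem~\ref{t1} that needs to be verified is the scale-invariant higher-integrability estimate~$(\star)$. This follows from the assumed weak-$A_\infty$ property by a standard argument (which may require enlarging the surface ball and a routine change-of-pole, using the flexibility in the choice of corkscrew point noted in Remark~\ref{r4.7}): starting from \eqref{eqn:main-weak-RHP} applied to $\Delta$ viewed as a sub-ball of a slightly larger one, and using the trivial bound $\hm^{X_\Delta}_{\oT}(2\Delta)\le 1$ together with ADR of $\partial\oT$, one obtains
\[
\int_{2\Delta} \bigl(k^{X_\Delta}\bigr)^q\,d\sigma\;\le\;C\,\sigma(\Delta)^{1-q}.
\]
Theorem~\ref{t1} then yields uniform rectifiability of $\partial\oT_Q$, with UR character independent of $Q$. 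Finally, the set $F_Q:=Q\cap\partial\oT_Q$ lies in both $E$ and in the UR set $\partial\oT_Q$, with $\sigma(F_Q)\gtrsim\sigma(Q)$ by hypothesis; thus $E$ has ``big pieces of uniformly rectifiable sets'' at every dyadic scale with uniform constants. The David--Semmes big-pieces stability theorem~\cite{DS1,DS2} then upgrades this to uniform rectifiability of $E$ itself.

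\textbf{Necessity (UR $\implies$ big pieces).} For this direction I would simply cite~\cite{BH}, where the required construction is carried out: for any $n$-dimensional UR set $E$ and any $Q\in\dd(E)$, one produces a subdomain $\oT_Q\subset\Omega=\ree\setminus E$ satisfying all four of the listed bullets, with harmonic measure in weak-$A_\infty$ with respect to surface measure on $\partial\oT_Q$. Morally, UR delivers big pieces of Lipschitz graphs (David--Semmes), from which one constructs chord-arc type subdomains inside $\Omega$ whose boundaries absorb a definite fraction of $Q$; Dahlberg's theorem, in its chord-arc extension due to Jerison--Kenig and David--Jerison, then supplies the weak-$A_\infty$ estimate on those subdomains.

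The only real obstacle lives in the sufficiency direction, and it has been absorbed into Theorem~\ref{t1}. Once Theorem~\ref{t1} is in hand, the passage from the big-pieces hypothesis to uniform rectifiability of $E$ is routine (verify $(\star)$ on each $\oT_Q$, apply Theorem~\ref{t1} to get UR of each $\partial\oT_Q$, then apply David--Semmes stability), and the reverse implication is a straight invocation of~\cite{BH}.
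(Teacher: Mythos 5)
Your proof is correct and matches the paper's argument: the paper likewise disposes of the ``only if'' direction by citing \cite{BH}, and handles the converse by applying Theorem~\ref{t1} to each $\oT_Q$ and invoking the David--Semmes stability of UR under ``big pieces.'' The only addition you make is to spell out the verification that the bulleted weak-$A_\infty$ hypothesis (together with $\hm^{X_\Delta}_{\oT}(\Delta)\gtrsim 1$) delivers condition $(\star)$ of Theorem~\ref{t1} for $\oT_Q$, which the paper leaves implicit; that step is correct, and as you note it suffices to land the estimate on $\Delta$ rather than $2\Delta$ since, via Lemma~\ref{l3.4} and Remark~\ref{r4.7}, only the non-degeneracy $(\star\star)$ is actually used.
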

The ``only if" direction is proved in \cite{BH}, and the open sets $\oT$ constructed in \cite{BH} even
satisfy a 2-sided Corkscrew condition, and moreover, $\oT\subset \om$,
with $\diam(\oT) \approx \diam(Q)$.
To obtain the converse direction, we simply observe that by Theorem
\ref{t1}, the subdomains $\oT$ have uniformly rectifiable boundaries, with uniform control of the ``UR character" 
of each $\partial\oT$, and thus, by \cite{DS2}, $E$ is uniformly rectifiable.

To formulate our main result in the  non-linear setting we first need to introduce some notation.  If $ \oo  \subset \mathbb R^{n+1} $ is an open set and $ 1  \leq  p  \leq  \infty, $ then by   $
W^{1 ,p} ( \oo ) $ we denote the space of equivalence classes of functions
$ f $ with distributional gradient $ \nabla f = ( f_{x_1},
 \dots, f_{x_{n+1}} ), $ both of which are $ q $
th power integrable on $ \oo$.  Let  $  \| f \|_{1,p} = \| f \|_p +  \big\| \, | \nabla f | \, \big\|_{p}  \,  $
be the  norm in $ W^{1,p} ( \oo ) $ where $ \| \cdot \|_q $ denotes
the usual  Lebesgue $ p $ norm in $ \oo$.  Next let $ C^\infty_0 (\oo)$ be
 the set of infinitely differentiable functions with compact support in $
\oo$ and let  $ W^{1,p}_0 ( \oo ) $ be the closure of $ C^\infty_0 (\oo) $
in the norm of $ W^{1,p} ( \oo)$.  We let $ W^{1,p}_{\rm loc} ( \oo ) $ be the set of all functions $u$ such that $u\,\Theta\in  W^{1,p}_0 ( \oo )$ whenever
$\Theta\in C^\infty_0 ( \oo )$.

Given an open set  $  \oo  $, and $   1 < p < \infty, $
  we say that
 $  u $ is  $ p$-harmonic in $ \oo $ provided
$  u \in W_{\rm loc}^ {1,p} ( \oo ) $  and
\begin{equation}\label{1.1} \dint_{\mathbb {R}^{n+1}} | \nabla  u |^{p - 2} \nabla  u\cdot
 \nabla \Theta  \, dX  = 0,
\qquad
\forall\,\Theta  \in
C^\infty_0 ( \oo ) \, . 
 \end{equation}
Observe that if $  u $ is smooth and $ \nabla  u \not = 0 $
in $ \oo$, then
\begin{equation}\label{1.2}  
\nabla \cdot ( | \nabla  u |^{ p - 2} \, \nabla  u ) \equiv 0\quad
 \mbox{in}\quad  \oo,
 \end{equation}  
and $  u $ is a classical solution in $ \oo $ to the $p$-Laplace partial differential equation.  Here, as in the sequel,
 $ \nabla \cdot $  is  the divergence operator.

Let $\Omega\subset \ree$ be an open set,  not necessarily connected, with $n$-dimensional ADR boundary.  Let $p\in(1,\infty)$.
Given $x\in  \pom$, and $0<r<\diam(\pom)$,  let  $u$ be a non-negative
$p$-harmonic function in $\Omega \cap B(x,r)$ which vanishes continuously on $\Delta(x,r):=B(x,r)\cap\pom$. Extend $u$ to all of $B(x,r)$ by putting
$u\equiv 0$ on $B(x,r)\setminus\overline{\Omega}$.  Then there exists (see \cite[Chapter 21]{HKM} and Lemma \ref{l2.10p} below), a unique non-negative finite Borel measure $   \mu $  on     $ \mathbb {R}^{n+1}$, with support contained in $ \Delta(x,r)$, such that
\begin{equation}\label{1.2+}
-  \dint_{\mathbb {R}^{n+1}} | \nabla u |^{p - 2}
 \nabla   u \cdot  \nabla \phi\, dX  =   \int_{\pom}  \phi \, d   \mu \,,
\forall\, \phi \in C_0^\infty (  B(x,r)).
\end{equation}
We refer to $\mu$ as the $p$-harmonic measure associated to $ u$.  In the case $ p = 2$, and  if $ u$ is the Green function for $\Omega$ with pole at $X\in\Omega$, then the measure $\mu$ coincides with harmonic measure at $X$, $\omega=\omega^X$.

Concerning the $p$-Laplace operator, $p$-harmonic functions and $p$-harmonic measure we prove the following theorem.

\begin{theorem}\label{t3} Let $\Omega\subset \ree$, $n\ge 2$, be an open set, whose boundary is
Ahlfors-David regular of dimension $n$. Let $p$, $1<p<\infty$, be given.   
Let $C$ be a sufficiently large constant (to be specified), depending 
only on $n$ and the ADR constant, and 
Suppose that there exist $q>1$, and a positive
constant $C_0$, for which the following holds:  for each $x\in \pom$ and
each $0<r<\diam( \pom)$, there is a non-trivial, non-negative
$p$-harmonic function $u=u_{x,r}$ in $\Omega\cap B(x,Cr)$, and corresponding
$p$-harmonic measure $\mu=\mu_{x,r}$, such that $\mu\ll\sigma$ in $\Delta(x,Cr)$, and  such that $k:= d\mu/d\sigma$
satisfies
\begin{equation}\label{eq1.9}
\left(\fint_{\Delta(x,Cr)} k(y)^q\,d\sigma(y)\right)^{1/q}
\le
C_0\,\frac{\mu\big(\Delta(x,r) \big)}{\sigma\big(\Delta(x,r)\big)}\,.
\end{equation}
Then $\pom$ is uniformly rectifiable, and
moreover the ``UR character'', see Definition \ref{defurchar}, depends only on $n$, the ADR constant, $p,q$ and $C_0$.
\end{theorem}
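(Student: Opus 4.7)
The plan is to reduce Theorem \ref{t3} to the same corona-decomposition/Carleson-measure machinery that yields Theorem \ref{t1}, with harmonic measure and the Green function replaced throughout by the $p$-harmonic analogs developed in \cite{LV-2006,LV-2007,LN}. The first move is to extract from the scale-invariant reverse H\"older estimate \eqref{eq1.9} a $p$-harmonic version of the local non-degeneracy condition $(\star\star)$ from Remark \ref{r4.7}. Concretely, H\"older's inequality applied to \eqref{eq1.9} yields: for each $\Delta(x,r)$ and each Borel set $A\subset \Delta(x,Cr)$ with $\sigma(\Delta(x,Cr)\setminus A)\le \eta\, \sigma(\Delta(x,Cr))$ (for $\eta$ small enough in terms of $q$, $C_0$, and the ADR constant), one gets $\mu(A)\gtrsim \mu(\Delta(x,r))$. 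This is the non-linear analog of \eqref{eq1.4}, and morally it is the only consequence of \eqref{eq1.9} that the proof will use.

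Next I would establish the interior Corkscrew condition for $\Omega$ together with a distinguished Corkscrew point $X_\Delta$ adapted to the test function $u=u_{x,r}$. Since $u$ is non-trivial, non-negative and $p$-harmonic with $\mu(\Delta(x,r))>0$, the comparison between $p$-harmonic measure and $p$-capacity, together with the interior non-degeneracy estimates of Lewis--Vogel \cite{LV-2006}, yield a point $X_\Delta\in B(x,r)\cap \Omega$ with $\dist(X_\Delta,\partial\Omega)\gtrsim r$ at which $u(X_\Delta)$ is comparable to $r\,[\mu(\Delta(x,r))/\sigma(\Delta(x,r))]^{1/(p-1)}$. Boundary H\"older continuity and Caccioppoli-type estimates for non-negative $p$-harmonic functions vanishing on $\Delta(x,Cr)$ then let me upgrade the non-degeneracy of the previous step to a bona fide weak-$A_\infty$-style control of $\mu$ by $\sigma$ on the family of subsurface balls of $\Delta(x,r)$, with the growth of $u$ at $X_\Delta$ playing the role of the normalizing mass $\hm^{X_\Delta}(\Delta)$ in the linear case.

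With these two inputs in hand, I would re-run the argument behind Theorem \ref{t1}: a corona decomposition of the dyadic lattice $\dd(\partial\Omega)$ into stopping trees on which $\mu$ is essentially a multiple of $\sigma$, followed by a Carleson-measure estimate that feeds a David--Semmes criterion (e.g., the bilateral weak geometric lemma) to conclude uniform rectifiability via \cite{DS1,DS2}. The main obstacle lies here: the linear tools (Green function, Riesz transforms, layer potentials) are unavailable, and the Green-function oscillation estimate that drives the Carleson packing in the linear case must be replaced by a $p$-harmonic substitute expressed directly in terms of $u$ itself, normalized so that $u(X_\Delta)\approx r\,[\mu(\Delta)/\sigma(\Delta)]^{1/(p-1)}$. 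Leveraging the Lewis--Vogel type identification $|\nabla u|^{p-1}\,d\sigma \sim d\mu$ at the boundary, together with interior gradient and Caccioppoli estimates for $p$-harmonic functions, the Carleson packing of the stopping cubes is ultimately reduced to the weak-$A_\infty$-style control of $\mu$ produced above, which completes the proof.
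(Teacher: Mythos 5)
Your proposal captures the opening moves of the paper's argument correctly: extracting the non-degeneracy condition $(\star\star)$ from \eqref{eq1.9} via H\"older, normalizing $u$ and $\mu$, and running a stopping-time (corona) decomposition to produce trees on which $\mu$ is comparable to $\sigma$ in the appropriate average sense. Those steps match Lemmas \ref{l3.4} and \ref{l4.4} and the preliminaries of Section \ref{s4}. However, there are two substantive gaps in the remainder.

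First, the target geometric criterion you reach for is the wrong one, and this is not merely a matter of choosing between equivalent David--Semmes conditions. You propose to feed the Carleson estimate into ``e.g., the bilateral weak geometric lemma.'' The BWGL (and the Weak Exterior Convexity condition used by Lewis--Vogel under the stronger hypothesis $k\approx 1$) require bilateral control: $E$ must be trapped between two nearby parallel planes at a good scale. The paper's entire point is that, once doubling of $\mu$ may fail, only a \emph{one-sided} flatness can be extracted from the $p$-harmonic function: one obtains a half-space $H$ with $H\cap E$ empty near $Q$ and a hyperplane $P=\partial H$ approximated by $E$ from one side, but no direct control ruling out large holes on the other side. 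This forces the introduction of a new condition, the Weak Half-Space Approximation (WHSA, Definition \ref{def2.14}), together with the non-trivial Proposition \ref{prop2.20} showing WHSA $\Rightarrow$ UR (which goes through the BAUP criterion, not BWGL). Without this step the argument does not close: you would need a second, exterior corkscrew or some doubling to make the bilateral condition hold, and neither is available here.

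Second, your account of how the Carleson packing is actually obtained is off. You say the packing ``is ultimately reduced to the weak-$A_\infty$-style control of $\mu$,'' but that control only furnishes the stopping-time tree and the pointwise lower bound $|\nabla u(Y_Q)|\gtrsim 1$ on it (Lemmas \ref{l4.4}, \ref{l4.1}, and \eqref{eq6.5}). The packing of the remaining bad cubes (Case 1) comes from a square-function bound of the form
\[
\iint_{\Omega^{*}_{\F,Q_0}} u\,F_\gamma(|\nabla u|)\,|\nabla u|^{p-2}\,|\nabla^2 u|^2\,dY \lesssim \sigma(Q_0),
\]
proved by integration by parts against the degenerate linearized operator $L$ in \eqref{eq1.6}--\eqref{eq1.7}, exploiting that $u$ solves $L u=0$ and $|\nabla u|^2$ is an $L$-subsolution, together with the ADR property of the sawtooth boundary. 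This is a genuinely different mechanism than a direct appeal to the reverse H\"older input, and it is where the $p$-Laplace structure really enters. The cubes where $\nabla u$ has small oscillation on the enlarged Whitney region (Case 2) are then shown to satisfy the $\eps$-local WHSA by the Lewis--Vogel construction (Lemma \ref{LVlemma}), a step your proposal also does not identify. So while your outline has the right large-scale shape, the two ingredients that make the non-doubling, non-linear case actually work --- the WHSA characterization of UR and the square-function/integration-by-parts estimate for the packing --- are missing.
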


Some remarks are in order concerning the hypotheses of Theorem \ref{t3}.    Let us
observe that, in particular,  Ahlfors-David regularity and \eqref{eq1.9} imply that
\begin{equation}\label{eq1.10}
 \mu\big(\Delta(x,Cr) \big) \leq  C_1\,\mu\big(\Delta(x,r) \big)\,,
\end{equation}
with $C_1 \approx C_0$.  In the linear case, the latter estimate will
follow automatically, 
with $\mu =\hm^{Y}$, for some $Y\in B(x,r)$ such that $\dist(Y,E) \approx r$,
and with $C_1$ depending only on $n$ and the ADR constant, by Bourgain's Lemma \ref{Bourgainhm} below,
even though $\hm^{Y}$
need not be a doubling measure (i.e., \eqref{eq1.10} says nothing about points other than $x$
nor about scales other than $r$).
In the non-linear case,
it seems that we must impose  condition \eqref{eq1.10} by hypothesis.  We also observe that
\eqref{eq1.9} holds in particular if $\mu\in$ weak-$A_\infty(\Delta(x,2Cr))$ and satisfies
\eqref{eq1.10} (with radius $2C$ in place of $C$).  Of course, \eqref{eq1.10} holds trivially if $\mu$ is a doubling measure,
but we do not assume doubling.

 \begin{remark}\label{rr1.15}  We note that, as in Remark \ref{r4.7}, 
the proof of Theorem \ref{t3} will in fact use,
in lieu of
absolute continuity and \eqref{eq1.9}, only the apparently weaker condition
that there exist uniform constants $\eta,\beta\in (0,1)$
such that for all $\Delta=\Delta(x,r)$, and for all Borel sets $A\subset \Delta$,
\begin{equation}\label{eq1.4p}
\sigma(A) \geq (1-\eta) \,\sigma(\Delta) \quad \implies 
\quad \mu(A)\, \geq\, \beta \, \mu(\Delta)\,.
\end{equation}
\end{remark} 

\subsection{Brief outline of the proofs of the main results}\label{ss1.2} As mentioned, the approach in the present paper is strongly influenced by prior work due to Lewis and Vogel \cite{LV-2006}, \cite{LV-2007}, who in \cite{LV-2007} proved  a version of Theorem \ref{t3}, and Theorem \ref{t1}, under the stronger hypothesis that $p$-harmonic measure $\mu$ itself is an Ahlfors-David regular measure. In the linear case $p=2$, this implies that
the Poisson kernel is a bounded, accretive function, i.e., $k\approx 1$.  Assuming that  $p$-harmonic measure $\mu$  is an Ahlfors-David regular measure, Lewis and Vogel were able to show that $ E$ satisfies the so-called Weak Exterior Convexity (WEC) condition, which characterizes
uniform rectifiability  \cite{DS2}.  To weaken the hypotheses on $\hm$ and $\mu$,
as we have done here,  requires
two further considerations.  The first is quite natural in this context:  a stopping time argument, in the spirit of the
proofs of the Kato square root conjecture \cite{HMc}, \cite{HLMc}, \cite{AHLMcT} (and of local $Tb$ theorems
\cite{Ch}, \cite{AHMTT}, \cite{H}), by means of which we extract ample dyadic sawtooth regimes on which averages
of harmonic measure and $p$-harmonic measure are bounded and accretive, see Lemma \ref{l4.4} below.  This allows us to
use the arguments of \cite{LV-2007} within these good sawtooth regions.
The second new consideration
is necessitated by the fact that in our setting, the doubling property may fail for harmonic and $p$-harmonic measure.  In the absence of doubling,
we are unable to
obtain the  WEC condition directly.  Nonetheless, we are able to follow the arguments of \cite{LV-2007}  very closely up to a point, to obtain a condition on $ \pom$ which we call the ``Weak Half Space Approximation"
(WHSA) property (see Definition \ref{def2.14}).  Indeed, extracting the essence of the \cite{LV-2007} argument, while dispensing with the doubling
property, one realizes that the WHSA is precisely what one obtains.  In the sequel, we present the argument of \cite{LV-2007} as Lemma \ref{LVlemma}. Finally, having obtained
that $\pom$ satisfies the WHSA property, we are able prove the following proposition stating that WHSA implies uniform rectifiability.
\begin{proposition}\label{prop2.20}
An $n$-dimensional ADR set $E\subset \ree$ is uniformly rectifiable if and only if it satisfies the
WHSA property.
\end{proposition}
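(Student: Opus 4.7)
The proposition is an equivalence, and we aim at reducing it to the bilateral weak geometric lemma (BWGL) characterization of uniform rectifiability due to David--Semmes. The easy direction UR $\Rightarrow$ WHSA proceeds by noting that BWGL yields, for every $\varepsilon>0$, a Carleson packing estimate on the cubes $Q\in\dd(E)$ where $E$ fails, inside the associated ball, to be $\varepsilon$-close in Hausdorff distance, bilaterally, to some $n$-plane $P_Q$. At each good cube the ball splits (outside a thin slab around $P_Q$) into two open half-spaces, both of which miss $E$; either one serves as the half-space required in the definition of WHSA, and the Carleson control of the bad cubes transfers directly to the exceptional set tolerated by WHSA.

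The substantive direction is WHSA $\Rightarrow$ UR. Our strategy is to establish BWGL directly from WHSA. The core difficulty is the one-sided nature of WHSA: at a good scale it produces only a single half-space $H_Q\subset \ree\setminus E$ near $Q$, giving no a priori control on how $E$ behaves on the opposite side of the bounding plane $P_Q$. We plan to proceed in three steps. First, use the Carleson packing built into WHSA, together with a standard stopping-time construction, to partition $\dd(E)$ into a Carleson family of bad cubes and a family of coherent regimes on whose cubes WHSA half-spaces exist at every relevant subscale. Second, within a coherent regime, show that the approximating planes at nearby scales and locations make small angles with one another, since their associated half-spaces must all avoid the same ADR set and any meaningful misalignment at a larger scale would contradict the existence of a WHSA half-space at a finer scale sitting inside the misaligned wedge. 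Third, reduce the one-sided approximation to a bilateral one by a pigeonhole on the side of $P_Q$ chosen by the half-space: the $n$-dimensional ADR lower bound on $E$ prevents $E$ from lying, at every subscale, on a single side of its approximating plane, and this forces the two sides to be realized at Carleson-comparable frequency.

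Combining these three ingredients produces a Carleson-full subcollection of cubes at which $E$ is trapped in a thin slab between two WHSA half-spaces on opposite sides of a common plane, which is exactly small bilateral $\beta_\infty$ approximation; BWGL then gives UR by David--Semmes. The main obstacle is the third step, namely converting one-sided half-space information into bilateral information without any doubling property of a measure on $E$. This is precisely the difficulty that forced the authors to introduce WHSA as a substitute for the weak exterior convexity condition of \cite{LV-2007} in the absence of doubling for harmonic or $p$-harmonic measure, and supplying a purely geometric pigeonhole resting on the ADR lower bound for $E$ appears to be the natural route.
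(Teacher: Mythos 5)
The easy direction (UR $\Rightarrow$ WHSA via BWGL) matches the paper. For the substantive direction, your Steps 1 and 2 are in the right spirit: the paper also packages the WHSA-bad cubes into a modified Carleson measure (via the family $\dd_\eps(Q)$ in \eqref{eq5.8aa}--\eqref{eq4.1}), and the heart of the argument is indeed an angle estimate showing that WHSA planes at nearby cubes and slightly smaller scales are nearly parallel (Claim \ref{claim:nu'-down} and Claim \ref{claim5.18}).

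However, your Step 3 and your overall target are where the argument breaks. You aim to deduce the bilateral weak geometric lemma (single-plane bilateral $\beta_\infty$ smallness) directly from WHSA, via a pigeonhole on the ADR lower bound. This cannot work, because WHSA genuinely does not force single-plane approximation at a Carleson-full family of cubes. The scenario that arises in the paper's Case 2 of Claim \ref{claim6.15} is exactly the obstruction: the WHSA plane $P=P(Q)$ lies within $K_0^{3/2}\ell(Q)$ of $Q$ with the half-space $H$ above it and empty of $E$, yet there can be a point $x\in 10Q\cap E$ more than $\sqrt\eps\,\ell(Q)$ below $P$. Picking a cube $Q'\ni x$ at scale $\approx\eps^{3/4}\ell(Q)$ and invoking WHSA there yields a second plane $P'$ nearly parallel to $P$, but separated from $P$ by a definite amount. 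The conclusion of the paper's argument is that $E\cap 10Q$ is close to $P\cup P'$ -- a union of two parallel planes, not a single plane. Nothing about the ADR lower bound excludes this: indeed $E$ being locally close to two parallel planes is ADR, is UR, and at the intermediate scale genuinely fails single-plane bilateral approximation. There is no pigeonhole that restores single-plane control there.

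The correct endpoint, and the one the paper uses, is the BAUP condition (Definition \ref{def2.5}), which allows bilateral approximation by a \emph{union} of planes, and then the David--Semmes Theorem \ref{t2.7} (\cite[Theorem I.2.18]{DS2}) that BAUP is equivalent to UR. Concretely, the paper shows (Claim \ref{claim6.15}) that if WHSA holds at every cube in $\dd_\eps(Q)$, then $Q$ satisfies $\sqrt\eps$-local BAUP with a family $\mathcal P$ consisting of at most two planes; the cubes where WHSA fails somewhere in $\dd_\eps(Q)$ pack by \eqref{eq6.2}. Your plan would need to be restructured to target BAUP rather than BWGL, at which point the pigeonhole of Step 3 becomes unnecessary and the argument collapses to the paper's two-plane geometry (Claims \ref{claim:nu'-down} and \ref{claim5.18}).
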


While the WHSA condition,
per se, is new, our proof of Proposition \ref{prop2.20} is based on a modified version of part of the argument in \cite{LV-2007}.

\subsection{Organization of the paper} The paper is organized as follows.  In Section \ref{s2}, we state 
several definitions, including definitions of ADR, UR, and dyadic grids,  and introduce  
further notions and notation. In Section \ref{s3}, we state, and  either prove, or give references for, 
the PDE estimates needed in the proofs of our main results. In Section \ref{s4}, we begin the (simultaneous) proofs of Theorem \ref{t1} and  Theorem \ref{t3} by giving some preliminary arguments. In Section \ref{s5}, following \cite{LV-2006}, \cite{LV-2007}, we complete the proofs of Theorem \ref{t1} and  Theorem \ref{t3},  modulo Proposition \ref{prop2.20}. At the end of Section \ref{s5} we also give the (very short) proof of
Corollary \ref{c1}.  In Section \ref{s6},
we give the proof of Proposition \ref{prop2.20}, i.e., the proof of the fact that the WHSA condition
implies uniform rectifiability.

\subsection{Discussion of recent related work}\label{ss1.4}
We note that some  related work has recently appeared, or been carried out, while this manuscript was in preparation. In the setting of uniform domains with lower ADR boundary with locally finite $n$-dimensional Hausdorff measure Mourgoglou \cite{Mo} has shown that rectifiability of the boundary implies absolute continuity of surface measure with respect to harmonic measure (for the Laplacian).  
Akman, Badger and the first and third authors of the present paper \cite{ABHM}, in the setting of uniform domains with ADR boundary, have characterized the rectifiability of the boundary in terms of the absolute continuity of harmonic measure and some elliptic measures and surface measure or in terms of some qualitative $A_\infty$ condition.  
Also,  Azzam, Mourgoglou and Tolsa \cite{AMT2} 
have obtained that absolute continuity of harmonic measure with respect to surface measure on a $H^n$-finite piece of  the boundary implies that harmonic measure is rectifiable in that piece. The setting is very general as they only assume 
a  ``porosity" (i.e.~Corkscrew) condition in the complement of $\pom$.  In \cite{HMMTV}, Mayboroda, Tolsa, Volberg and the first and third authors of the present paper , the same result is proved removing the porosity assumption. Both \cite{AMT2} and the follow-up version \cite{HMMTV} (which will be combined in the forthcoming paper \cite{AHMMMTV}) rely on recent deep results
of \cite{NToV}, \cite{NToV2}, concerning connections between rectifiability and the behavior of
Riesz transforms.

Finally, we discuss two closely related papers treating the case $p=2$.
First, we mention that a preliminary version of our results, treating only the linear
harmonic case (i.e., Theorem \ref{t1} of the present paper)
under hypothesis $(\star)$, 
appeared earlier in the unpublished preprint \cite{HM-4}.
The result of \cite{HM-4}, again in the case $p=2$, was then essentially
reproved, by a different method,
in the work of Mourgoglou and Tolsa \cite{MT}, but assuming condition
$(\star\star)$ in place of $(\star)$.   While the present paper 
was in preparation,  we learned of the work in \cite{MT}, and we 
realized that our arguments (and those of \cite{HM-4}), almost unchanged, also allow $(\star)$ to be replaced by
$(\star\star)$ or its $p$-harmonic equivalent.  The current version of this
manuscript incorporates this observation.\footnote{We thank the authors of \cite{MT} for making their
preprint available to us, while our manuscript was in preparation.}
Let us mention also that the approach in \cite{MT}
 is based on showing that  $(\star\star)$ for harmonic measure
 implies $L^2$ boundedness of the Riesz transforms, and thus it is a quantitative version of
 the method of  \cite{AHMMMTV}.  An interesting feature of the proof in \cite{MT}, is that
 it works even without the lower bound in the Ahlfors-David condition;  in that 
 case, one may deduce
 rectifiability, as opposed to uniform rectifiability, of the underlying measure on 
 $\pom$. On the other hand, it seems difficult to generalize the 
 approach of \cite{MT} to the $p$-Laplace setting, since it is based on 
 Riesz transforms, which are tied to the linear harmonic case.

\section{ADR, UR, and dyadic grids}\label{s2}

\begin{definition}\label{defadr} ({\bf  ADR})  (aka {\it Ahlfors-David regular}).
We say that a  set $E \subset \ree$, of Hausdorff dimension $n$, is ADR
if it is closed, and if there is some uniform constant $C$ such that
\begin{equation} \label{eq1.ADR}
C^{-1}\, r^n \leq \sigma\big(\Delta(x,r)\big)
\leq C\, r^n,\qquad \forall\, r\in(0,\diam (E)),\quad x \in E,
\end{equation}
where $\diam(E)$ may be infinite.
Here, $\Delta(x,r):= E\cap B(x,r)$ is the ``surface ball" of radius $r$,
and $\sigma:= H^n|_E$ 
is the ``surface measure" on $E$, where $H^n$ denotes $n$-dimensional
Hausdorff measure.
\end{definition}

\begin{definition}\label{defur} ({\bf UR}) (aka {\it uniformly rectifiable}).
An $n$-dimensional ADR (hence closed) set $E\subset \ree$
is UR if and only if it contains ``Big Pieces of
Lipschitz Images" of $\rn$ (``BPLI").   This means that there are positive constants $\theta$ and
$M_0$, such that for each
$x\in E$ and each $r\in (0,\diam (E))$, there is a
Lipschitz mapping $\rho= \rho_{x,r}: \rn\to \ree$, with Lipschitz constant
no larger than $M_0$,
such that
$$
H^n\Big(E\cap B(x,r)\cap  \rho\left(\{z\in\rn:|z|<r\}\right)\Big)\,\geq\,\theta\, r^n\,.
$$
\end{definition}

We recall that $n$-dimensional rectifiable sets are characterized by the
property that they can be
covered, up to a set of
$H^n$ measure 0, by a countable union of Lipschitz images of $\rn$;
we observe that BPLI  is a quantitative version
of this fact.

We remark
that, at least among the class of ADR sets, the UR sets
are precisely those for which all ``sufficiently nice" singular integrals
are $L^2$-bounded  \cite{DS1}.    In fact, for $n$-dimensional ADR sets
in $\ree$, the $L^2$ boundedness of certain special singular integral operators
(the ``Riesz Transforms"), suffices to characterize uniform rectifiability (see \cite{MMV} for the case $n=1$, and
\cite{NToV} in general).
We further remark that
there exist sets that are ADR (and that even form the boundary of a domain satisfying
interior Corkscrew and Harnack Chain conditions),
but that are totally non-rectifiable (e.g., see the construction of Garnett's ``4-corners Cantor set"
in \cite[Chapter1]{DS2}).  Finally, we mention that there are numerous other characterizations of UR sets
(many of which remain valid in higher co-dimensions); see \cite{DS1,DS2}, and in particular
Theorem \ref{t2.7} below.  In this paper, we shall also present a new characterization of UR sets of co-dimension 1
(see Proposition \ref{prop2.20} below),
which will be very useful in the proof of Theorem \ref{t1}.

\begin{definition}\label{defurchar} ({\bf UR character}).   Given a UR set $E\subset \ree$, its ``UR character"
is just the pair of constants $(\theta,M_0)$ involved in the definition of uniform rectifiability,
along with the ADR constant; or equivalently,
the quantitative bounds involved in any particular characterization of uniform rectifiability.
\end{definition}

\begin{definition} ({\bf Corkscrew condition}).  \label{def1.cork}
Following
\cite{JK}, we say that an opent set $\Omega\subset \ree$
satisfies the ``Corkscrew condition'' if for some uniform constant $c_0>0$ and
for every surface ball $\Delta:=\Delta(x,r),$ with $x\in \partial\Omega$ and
$0<r<\diam(\partial\Omega)$, there is a point $X_\Delta\in B(x,r)\cap\Omega$ such that
$\dist(X_\Delta,\pom)\geq c_0 r$.  The point $X_\Delta\subset \Omega$ is called
a ``Corkscrew point'' relative to $\Delta.$  
\end{definition}

\begin{lemma}\label{lemmaCh}({\bf Existence and properties of the ``dyadic grid''})
\cite{DS1,DS2}, \cite{Ch}.
Suppose that $E\subset \ree$ is closed $n$-dimensional ADR set.  Then there exist
constants $ a_0>0,\, \gamma>0$ and $C_*<\infty$, depending only on dimension and the
ADR constant, such that for each $k \in \mathbb{Z},$
there is a collection of Borel sets (``cubes'')
$$
\mathbb{D}_k:=\{Q_{j}^k\subset E: j\in \mathfrak{I}_k\},$$ where
$\mathfrak{I}_k$ denotes some (possibly finite) index set depending on $k$, satisfying

\begin{list}{$(\theenumi)$}{\usecounter{enumi}\leftmargin=.8cm
\labelwidth=.8cm\itemsep=0.2cm\topsep=.1cm
\renewcommand{\theenumi}{\roman{enumi}}}

\item $E=\cup_{j}Q_{j}^k\,\,$ for each
$k\in{\mathbb Z}$.

\item If $m\geq k$ then either $Q_{i}^{m}\subset Q_{j}^{k}$ or
$Q_{i}^{m}\cap Q_{j}^{k}=\emptyset$.

\item For each $(j,k)$ and each $m<k$, there is a unique
$i$ such that $Q_{j}^k\subset Q_{i}^m$.

\item $\diam\big(Q_{j}^k\big)\leq C_* 2^{-k}$.

\item Each $Q_{j}^k$ contains some ``surface ball'' $\Delta \big(x^k_{j},a_02^{-k}\big):=
B\big(x^k_{j},a_02^{-k}\big)\cap E$.

\item $H^n\big(\big\{x\in Q^k_j:{\rm dist}(x,E\setminus Q^k_j)\leq \varrho \,2^{-k}\big\}\big)\leq
C_*\,\varrho^\gamma\,H^n\big(Q^k_j\big),$ for all $k,j$ and for all $\varrho\in (0,a_0)$.
\end{list}
\end{lemma}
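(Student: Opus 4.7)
\textbf{Proof proposal for Lemma \ref{lemmaCh}.} The plan is to follow the Christ--David--Semmes construction: first build a ``tree'' structure by a maximal-net argument, then derive properties (i)--(v) by soft geometric considerations using only the ADR hypothesis, and finally establish the small boundary condition (vi), which is the genuinely nontrivial point.

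First, for each $k\in\ZZ$, fix a maximal $2^{-k}$-separated subset $\{x_j^k\}_{j\in\mathfrak{I}_k}\subset E$. Maximality together with the upper ADR bound gives bounded overlap of the balls $B(x_j^k, 2^{-k})$, and Zorn's Lemma produces these nets in every scale. Next, I would set up the parent relation: for each $k$ and each $j\in\mathfrak{I}_{k+1}$, select (breaking ties by an arbitrary fixed well-ordering) a unique $i\in\mathfrak{I}_k$ minimizing $|x_j^{k+1}-x_i^k|$; by maximality of the level-$k$ net this distance is at most $2^{-k}$. Iterating, every $x_j^k$ gets a unique chain of ancestors, one per coarser scale. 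Then define the preliminary cubes by
\begin{equation*}
\widehat Q_j^k := \bigcup_{m\geq k}\bigcup\big\{\,\{x_i^m\} : x_i^m \text{ has } x_j^k \text{ as its ancestor at scale } k\,\big\},
\end{equation*}
and finally declare $Q_j^k$ to be the closure of $\widehat Q_j^k$ intersected with $E$, modified on a negligible set so that every point of $E$ lies in exactly one cube of each generation. Since the closest centers at scale $m\to\infty$ are dense in $E$, property (i) holds; the nesting properties (ii) and (iii) follow from the tree structure of the ancestor map by construction.

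For (iv), an easy telescoping argument bounds $\dist(x, x_j^k)$ by $\sum_{m\geq k} 2^{-m}\lesssim 2^{-k}$ for any $x\in Q_j^k$, giving $\diam(Q_j^k)\leq C_*2^{-k}$. For (v), note that if $y\in E$ with $|y-x_j^k|<a_0 2^{-k}$ for $a_0$ small, then for every $m\geq k$ the unique ancestor at scale $k$ of the center nearest $y$ at scale $m$ must be $x_j^k$ (again by a telescoping estimate), whence $y\in Q_j^k$; thus the inner surface ball exists.

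The heart of the lemma is property (vi). Here I would follow Christ's original approach. Fix $k$ and $Q=Q_j^k$; for a point $x\in Q$ within distance $\varrho\,2^{-k}$ of $E\setminus Q$, there must exist some scale $m\geq k$ such that $x$ lies close to the ``interface'' between the level-$m$ cube containing it and a neighboring level-$m$ cube that has a different ancestor at scale $k$. Decompose the boundary layer dyadically in $m$: the portion coming from scale $m$ is a union of pieces contained in annuli of thickness $\approx \varrho\,2^{-k}$ around centers $x_i^m$ which are ``boundary centers'' (i.e., have a near-neighbor at scale $m$ with a different scale-$k$ ancestor). Using the upper ADR bound on each annular region and summing the geometric series in $m$, one obtains an estimate of the form
\begin{equation*}
H^n\big(\{x\in Q: \dist(x, E\setminus Q)\leq \varrho\,2^{-k}\}\big) \,\leq\, C \varrho\cdot H^n(Q) + \text{(tail)}.
\end{equation*}
The linear factor $\varrho$ is not strong enough; this is where the main obstacle lies. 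To upgrade to a power $\varrho^\gamma$ with $\gamma>0$, one exploits a self-improvement: the boundary layer of $Q$ at depth $\varrho\,2^{-k}$ can be decomposed into boundary layers of sub-cubes at their own scales, whose total measure obeys a geometric recursion. Iterating this recursion $N$ times with $\varrho\approx 2^{-N}$ gives the desired bound with $\gamma=\gamma(n, \text{ADR})>0$. Alternatively, one may perform a small random ``jitter'' of the assignment rule as in \cite{Ch}, and average: the expected measure of the boundary layer is $\lesssim\varrho^\gamma H^n(Q)$, so some deterministic realization works.

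The main obstacle, as noted, is (vi): the naive estimate yields only a linear dependence on $\varrho$, and improving to $\varrho^\gamma$ requires either the iterative self-improvement or the probabilistic perturbation argument. Properties (i)--(v), by contrast, are consequences of maximal-net geometry and the upper ADR inequality alone, and require no delicate measure-theoretic input.
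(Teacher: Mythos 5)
The paper does not prove Lemma~\ref{lemmaCh}: it is cited outright from \cite{DS1,DS2} and \cite{Ch}, and the bullet points following the statement explicitly record that Christ's original construction uses a \emph{small} dyadic parameter $\delta\in(0,1)$, with the passage to $\delta=1/2$ requiring a further modification (cf.\ \cite[Proof of Proposition~2.12]{HMMM}). Your sketch reconstructs the Christ net-and-ancestor scheme, which is indeed the framework the cited references use, and you correctly flag (vi) as the delicate point; but two steps would not survive being made rigorous.

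First, the telescoping argument you give for (v) fails at parameter $1/2$: if $z_m$ denotes the nearest scale-$m$ center to $y$ and $z_m^{(k)}$ its scale-$k$ ancestor, you only get $|z_m-z_m^{(k)}|\le\sum_{l=k}^{m-1}2^{-l}<2\cdot 2^{-k}$, which is \emph{larger} than the $2^{-k}$ separation of the scale-$k$ net, so $|y-x_j^k|<a_0 2^{-k}$ does not force $z_m^{(k)}=x_j^k$. Christ takes $\delta$ small precisely so that $\sum_{l\ge k}\delta^l\ll\delta^k$; recovering $\delta=1/2$ needs the extra argument (e.g.\ nested nets) the paper cites from \cite{HMMM}. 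Relatedly, declaring $Q_j^k$ to be ``the closure modified on a negligible set'' is circular: negligibility of the overlaps between closures of sibling cubes is essentially the content of (vi) in the limit $\varrho\to 0$, so it cannot be assumed before (vi) is established. Second, for (vi) the intermediate claim that a single-scale covering ``yields a linear dependence on $\varrho$'' is not justified by the ADR hypothesis alone: the number of scale-$m$ pieces of side $\approx\varrho\,2^{-k}$ that touch $E\setminus Q$ is uncontrolled a priori, and the naive count only yields the trivial bound $\lesssim\sigma(Q)$. The actual proofs do not pass through a linear bound: Christ randomizes the tie-breaking in the parent map and estimates the expected measure of the boundary layer, while David--Semmes run an iteration in which, at each generation, a definite proportion of the near-boundary measure must peel off into the interior, crucially using the inner ball property (v). You name both mechanisms, so the spirit is right, but one of them would need to be carried out in full in place of the asserted linear estimate.
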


Let us make a few remarks are concerning this lemma, and discuss some related notation and terminology.

\begin{list}{$\bullet$}{\leftmargin=0.4cm  \itemsep=0.2cm}

\item In the setting of a general space of homogeneous type, this lemma has been proved by Christ
\cite{Ch}, with the
dyadic parameter $1/2$ replaced by some constant $\delta \in (0,1)$.
In fact, one may always take $\delta = 1/2$ (cf.  \cite[Proof of Proposition 2.12]{HMMM}).
In the presence of the Ahlfors-David
property (\ref{eq1.ADR}), the result already appears in \cite{DS1,DS2}.

\item  For our purposes, we may ignore those
$k\in \mathbb{Z}$ such that $2^{-k} \gtrsim {\rm diam}(E)$, in the case that the latter is finite.

\item  We shall denote by  $\mathbb{D}=\mathbb{D}(E)$ the collection of all relevant
$Q^k_j$, i.e., $$\mathbb{D} := \cup_{k} \mathbb{D}_k,$$
where, if $\diam (E)$ is finite, the union runs
over those $k$ such that $2^{-k} \lesssim  {\rm diam}(E)$.

\item Properties $(iv)$ and $(v)$ imply that for each cube $Q\in\mathbb{D}_k$,
there is a point $x_Q\in E$, a Euclidean ball $B(x_Q,r)$ and a surface ball
$\Delta(x_Q,r):= B(x_Q,r)\cap E$ such that
$r\approx 2^{-k} \approx {\rm diam}(Q)$
and \begin{equation}\label{cube-ball}
\Delta(x_Q,r)\subset Q \subset \Delta(x_Q,Cr),\end{equation}
for some uniform constant $C$.
We shall denote this ball and surface ball by
\begin{equation}\label{cube-ball2}
B_Q:= B(x_Q,r) \,,\qquad\Delta_Q:= \Delta(x_Q,r),\end{equation}
and we shall refer to the point $x_Q$ as the ``center'' of $Q$.

\item Given a dyadic cube $Q\in\dd$, we define its ``$\kappa$-dilate"  by
\begin{equation}\label{dilatecube}
\kappa Q:= E\cap B\left(x_Q,\kappa \diam(Q)\right).
\end{equation}

\item For a dyadic cube $Q\in \mathbb{D}_k$, we shall
set $\ell(Q) = 2^{-k}$, and we shall refer to this quantity as the ``length''
of $Q$.  Clearly, $\ell(Q)\approx \diam(Q).$

\item For a dyadic cube $Q \in \mathbb{D}$, we let $k(Q)$ denote the ``dyadic generation''
to which $Q$ belongs, i.e., we set  $k = k(Q)$ if
$Q\in \mathbb{D}_k$; thus, $\ell(Q) =2^{-k(Q)}$.

\item For any $Q\in \dd(E)$, we set $\dd_Q:= \{Q'\in\dd:\,Q'\subset Q\}\,.$

\item Given $Q_0\in\dd(E)$ and a family $\F=\{Q_j\}\subset\dd$ of pairwise disjoint cubes, we set
\begin{equation}
\dd_{\F,Q_0}\!:=
\big\{ Q\in\dd_{Q_0}\!:\, Q \mbox{ is not contained in any }Q_j\in\F\big\}
=\dd_{Q_0}\!\setminus\Big(\bigcup_{Q_j\in\F}\dd_{Q_j}\Big).
\label{eq:def-sawt}
\end{equation}

\end{list}

\begin{definition} ({\bf $\eps$-local BAUP})\label{def2.4} Given $\eps>0$,
we shall say that $Q\in\dd(E)$ satisfies the
$\eps$-{\it local BAUP} condition if there is a family $\mathcal{P}$ of hyperplanes (depending on $Q$)
such that every point in $10Q$ is at a distance at most $\eps \ell(Q)$ from $\cup_{P\in\mathcal{P}}P$, and
every point in $\left(\cup_{P\in\mathcal{P}}P\right) \cap B(x_Q, 10\diam(Q))$ is at a distance at most $\eps\ell(Q)$
from $E$.

\end{definition}

\begin{definition}\label{def2.5} ({\bf BAUP}).
We shall say that an $n$-dimensional ADR set $E\subset \ree$
satisfies the condition of {\it Bilateral Approximation by
Unions of Planes}  (``BAUP"),
if for some $\eps_0>0$, and for every positive
$\eps<\eps_0$, there is a constant $C_\eps$ such that the set $\B$ of bad cubes in $\dd(E)$, for which the
$\eps$-local
BAUP condition 
fails, satisfies the packing condition
\begin{equation}\label{eq2.pack}
\sum_{Q'\subset Q,\, Q'\in\B} \sigma(Q')  \leq C_\eps \,\sigma(Q)\,,\qquad \forall\, Q\in \dd(E)\,.
\end{equation}
\end{definition}

For future reference, we recall the following result of David and Semmes \cite{DS2}, see {\cite[Theorem I.2.18, p. 36]{DS2}}.

\begin{theorem}\label{t2.7} Let $E\subset \ree$ be an $n$-dimensional ADR set. Then, $E$ is uniformly rectifiable if and only if it satisfies
BAUP.
\end{theorem}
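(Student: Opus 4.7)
The plan is to establish the equivalence in two steps, following the general framework for comparing geometric characterizations of UR as in \cite{DS1,DS2}.

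\textbf{UR $\implies$ BAUP.} For this direction, my plan is to invoke the Bilateral Weak Geometric Lemma (BWGL), which is already known to characterize UR among $n$-dimensional ADR sets: the cubes $Q \in \dd(E)$ for which no single hyperplane $\eps$-bilaterally approximates $E$ in $10Q$ satisfy a Carleson packing estimate. A singleton family $\{P\}$ is, in particular, a degenerate union of planes in the sense of Definition \ref{def2.4}, so any cube that is ``good'' for BWGL at level $\eps$ is automatically good for the $\eps$-local BAUP condition. Hence the BAUP bad set $\B$ is contained in the BWGL bad set and inherits the packing bound \eqref{eq2.pack}.

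\textbf{BAUP $\implies$ UR.} This is the substantive direction. The plan is to show that BAUP implies a Big Pieces of Lipschitz Graphs condition, which in turn gives UR via the coincidence results of \cite{DS1,DS2}. Fix a small parameter $\eps$, a top cube $Q_0 \in \dd(E)$, and run a stopping-time argument using \eqref{eq2.pack} to extract a coherent subfamily $\mathbf{S} \subset \dd_{Q_0}$ whose maximal elements tile a definite fraction of $Q_0$ and for which the $\eps$-local BAUP condition holds at every $Q \in \mathbf{S}$. Next, for each $Q \in \mathbf{S}$, select a single distinguished plane $P_Q$ from the family $\mathcal{P}_Q$ by a canonical geometric rule, e.g., take the component of $\cup_{P\in\mathcal{P}_Q}P$ carrying the densest piece of $E \cap B(x_Q,10\diam(Q))$. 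Using bilateral approximation at two overlapping scales $Q \subset Q'$ in $\mathbf{S}$, deduce that the slopes and offsets of $P_Q$ and $P_{Q'}$ lie within $O(\eps)$ of each other. Finally, patch the collection $\{P_Q\}_{Q\in\mathbf{S}}$ into a Lipschitz graph $\Gamma$ over $P_{Q_0}$, with Lipschitz constant controlled by dimension and $\eps$, and verify $\sigma(E \cap Q_0 \cap \Gamma) \gtrsim \sigma(Q_0)$ using the bilateral approximation at the top of $\mathbf{S}$ together with the ADR lower bound on $Q_0$.

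\textbf{Main obstacle.} The core difficulty lies in the plane selection step. Unlike the single-plane BWGL, BAUP permits a whole union $\cup_{P\in\mathcal{P}_Q} P$ at each scale, and there is no canonical way to pick a single component that remains coherent across generations: distinct components of $\mathcal{P}_Q$ may each carry mass of $E$, and when one descends to a child $Q'\subset Q$ the decomposition may refine, split, or reshuffle. To overcome this, one must localize the stopping-time construction to a single ``branch'' of the union of planes at each scale and prove that bilateral approximation, used in both the ``$E$ close to planes'' and ``planes close to $E$'' directions, forces the chosen branch to vary only by $O(\eps)$ between consecutive scales. Once this branch-tracking stability is in place, the final patching into a Lipschitz graph and the big-piece estimate follow along fairly standard lines in the David-Semmes framework.
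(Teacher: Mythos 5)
The paper does not prove this theorem at all: it is stated as a recalled result of David and Semmes, with an explicit citation to \cite[Theorem~I.2.18, p.~36]{DS2}, followed only by a short remark that the dyadic formulation of BAUP given in Definition~\ref{def2.5} implies the (superficially different) version in \cite{DS2} --- which is the only direction needed in the paper, since BAUP is used here solely as a sufficient condition for UR. Your proposal, by contrast, attempts a proof from scratch, so it is a genuinely different route from the paper's.

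Your argument for UR $\implies$ BAUP is fine and matches the spirit of what the paper uses elsewhere (compare the start of the proof of Proposition~\ref{prop2.20}, which invokes BWGL in exactly this way): a BWGL-good cube furnishes a single hyperplane bilaterally approximating $E$ in a fixed dilate of $Q$, and a singleton $\{P\}$ is trivially admissible for the $\eps$-local BAUP condition of Definition~\ref{def2.4}.

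The BAUP $\implies$ UR direction, however, has a real gap, and it is precisely the one you flag as the ``main obstacle.'' The ``densest branch'' selection rule does not give the $O(\eps)$ slope/offset stability you assert between consecutive scales. Concretely, take $E$ to be two parallel hyperplanes at unit distance, which is ADR and satisfies BAUP. At a cube $Q$ with $\ell(Q)\gg 1$ the admissible family may be a single plane between the two sheets; at $\ell(Q)\approx 1$ it is the union of the two sheets; and at a child $Q'\subset Q$ with $\ell(Q')\ll 1$ that happens to sit on the sheet opposite to your chosen ``densest'' component of $\mathcal{P}_Q$, the distinguished plane $P_{Q'}$ is at distance $\approx 1 \gg \eps\,\ell(Q)$ from $P_Q$, so the claimed coherence fails. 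This is not a cosmetic issue: bilateral approximation by a \emph{union} of planes is genuinely weaker than bilateral approximation by a single plane, and converting the former into a coherent single-graph corona is the substantive content of the David--Semmes argument (which proceeds through their corona-decomposition machinery rather than a direct patching of selected planes). To repair your scheme one would at least have to incorporate a stopping rule that terminates a branch whenever the selected plane drifts, and then show that the stopped region still carries a definite fraction of $\sigma(Q_0)$; as written, the proposal asserts the stability rather than proving it, and the rest of the argument rests on that unproved step.
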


We remark that the definition of BAUP in \cite{DS2} is slightly different in superficial
appearance, but it is not hard to verify that the dyadic version stated here is equivalent to
the condition in \cite{DS2}.  We note that we shall not need the full strength of this equivalence here,
but only the fact that our version of BAUP implies the version in \cite{DS2}, and hence implies UR.

We shall also require a new characterization of UR sets
of co-dimension 1, which is related to the BAUP and its variants.
For a sufficiently large constant $K_0$ to be chosen (see Lemma \ref{l4.1} below), we set
\begin{equation}\label{eq2.bstar}
B_Q^*:= B(x_Q,K_0^2\ell(Q))\,, \qquad \Delta^*_Q:= B_Q^*\cap E\,.
\end{equation}
Given a small positive number $\eps$, which we shall typically assume to be much smaller than $K_0^{-6}$,
we also set
\begin{equation}\label{eq2.bstarstar}
B_Q^{**}=B_Q^{**}(\eps) := B(x_Q,\eps^{-2}\ell(Q))\,,\quad
B_Q^{***}=B_Q^{***}(\eps) := B(x_Q,\eps^{-5}\ell(Q))\,. 
\end{equation}

\begin{definition} ({\bf $\eps$-local WHSA})\label{def2.13} Given $\eps>0$,
we shall say that $Q\in\dd(E)$ satisfies the
$\eps$-{\it local WHSA} condition
(or more precisely, the ``$\eps$-local WHSA with parameter $K_0$") if there is a half-space
$H = H(Q)$, a hyperplane  $P=P(Q) =\partial H$, and a fixed positive number $K_0$
satisfying
\begin{enumerate}
\item $\dist(Z,E)\leq\eps\ell(Q),$ for every $Z\in P\cap B_Q^{**}(\eps)$.

\smallskip

\item $\dist(Q,P)\leq K_0^{3/2} \ell(Q).$

\smallskip

\item $H\cap B_Q^{**}(\eps)\cap E=\emptyset.$

\end{enumerate}
\end{definition}

Note that  part (2) of the previous definition says that 
the hyperplane $P$ has an ``ample'' intersection with the ball  $B_Q^{**}(\eps)$. Indeed,
\begin{equation}\label{intersect-WHSA}
\dist(x_Q,P) \lesssim  K_0^{\frac32}\,\ell(Q) 
\,\ll\eps^{-2}\ell(Q).
\end{equation}

\begin{definition} ({\bf WHSA})\label{def2.14}
We shall say that an $n$-dimensional ADR set $E\subset \ree$
satisfies the {\it Weak  Half-Space Approximation} property
(``WHSA") if for some pair of positive constants $\eps_0$ and $K_0$, and for every positive
$\eps<\eps_0$, there is a constant $C_\eps$ such that the set $\B$ of bad cubes in $\dd(E)$, for which the
$\eps$-local
WHSA condition with parameter $K_0$
fails, satisfies the packing condition
\begin{equation}\label{eq2.pack2}
\sum_{Q\subset Q_0,\, Q\in\B} \sigma(Q)  \leq C_\eps \,\sigma(Q_0)\,,\qquad \forall\, Q_0\in \dd(E)\,.
\end{equation}
\end{definition}

Next, we develop some further notation and terminology.   Given a closed set $E$,
we set $\delta_E(Y):=\dist(Y,E)$, and we shall simply write $\delta(Y)$ when the set
has been fixed.

Let $\mathcal{W}=\W(\Omega)$ denote a collection
of (closed) dyadic Whitney cubes of $\Omega$, so that the cubes in $\mathcal{W}$
form a covering of $\Omega$ with non-overlapping interiors, and  which satisfy
\begin{equation}\label{eqWh1} 4\, {\rm{diam}}\,(I)\leq \dist(4 I,\pom) \leq  \dist(I,\pom) \leq 40 \, {\rm{diam}}\,(I)\end{equation}
and
\begin{equation}\label{eqWh2}\diam(I_1)\approx \diam(I_2), \mbox{ whenever $I_1$ and $I_2$ touch.}
\end{equation}

Assuming that $E=\pom$ is ADR  and given $Q\in \dd(E)$, for the same constant $K_0$ as in \eqref{eq2.bstar}, we set
\begin{equation}\label{eq2.1}
\W_Q:= \left\{I\in \W:\,K_0^{-1} \ell(Q)\leq \ell(I)
\leq K_0\,\ell(Q),\, {\rm and}\, \dist(I,Q)\leq K_0\, \ell(Q)\right\}\,.
\end{equation}
We fix a small, positive parameter $\tau$, to be chosen momentarily, and given $I\in\W$,
we let
\begin{equation}\label{eq2.3*}I^* =I^*(\tau) := (1+\tau)I
\end{equation}
denote the corresponding ``fattened" Whitney cube.
We now choose $\tau$ sufficiently small that the cubes $I^*$ will retain the usual properties of Whitney cubes,
in particular that
$$\diam(I) \approx \diam(I^*) \approx \dist(I^*,E) \approx \dist(I,E)\,.$$
We then define Whitney regions
with respect to $Q$ by setting
\begin{equation}\label{eq2.3}
U_Q:= \bigcup_{I\in \W_Q}I^*\,. 
\end{equation}
We observe that these Whitney regions may have more than one connected component,
but that the number of distinct components is uniformly bounded, depending only upon $K_0$ and dimension.
We enumerate the components of $U_Q$
as $\{U_Q^i\}_i$.

Moreover, we enlarge the Whitney regions as follows.
\begin{definition}\label{def2.11a} For $\eps>0$,
and given $Q\in\dd(E)$,
we write $X\approx_{\eps,Q} Y$ if $X$ may be connected to $Y$ by a chain of
at most $\eps^{-1}$ balls of the form $B(Y_k,\delta(Y_k)/2)$, with
$\eps^3\ell(Q)\leq\delta(Y_k)\leq \eps^{-3}\ell(Q)$.
Given a sufficiently small parameter $\eps>0$, we then set
\begin{equation}\label{eq2.3a}
\tU^i_Q:= \left\{X \in\ree\setminus E:\, X\approx_{\eps,Q} Y\,,\, {\rm for\, some\,} Y\in U^{i}_Q\right\} \,.
\end{equation}
\end{definition}
\begin{remark}\label{r2.5}
Since $\tU^i_Q$ is
an enlarged version of $U_Q$, it may be that 
there are some $i\neq j$ for which
$\tU^i_Q$ meets $\tU^j_Q$.  This overlap will be harmless.
\end{remark}


\section{PDE estimates}\label{s3}

In this section we recall several estimates for harmonic measure and harmonic functions, and also for $p$-harmonic measure and $p$-harmonic functions. Although some of the PDE results in the harmonic case $p=2$ can be 
subsumed into the general
$p$-harmonic theory, we choose to present some aspects of the harmonic theory separately, in part for the 
convenience of those readers
who are more familiar with the case $p=2$, and in part because the
presence of the Green function is unique to that case.

\subsection{PDE estimates: the harmonic case}

Next, we recall several facts concerning harmonic measure and Green's functions.
Let $\Omega$ be an open set, not necessarily connected, and set
$\delta(X)=\delta_{\pom}(X)= \dist(X,\pom)$.

\begin{lemma}[Bourgain \cite{B}]\label{Bourgainhm}  Suppose that
$\partial \Omega$ is $n$-dimensional ADR.  Then there are uniform constants $c\in(0,1)$
and $C\in (1,\infty)$, depending only on $n$ and ADR,
such that for every $x \in \partial\Omega$, and every $r\in (0,\diam(\partial\Omega))$,
if $Y \in \Omega \cap B(x,cr),$ then
\begin{equation}\label{eq2.Bourgain1}
\omega^{Y} (\Delta(x,r)) \geq 1/C>0 \;.
\end{equation}
\end{lemma}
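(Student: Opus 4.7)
The plan is to execute the classical ``Bourgain potential'' argument. Since $n\geq 2$, the ambient space $\ree=\mathbb{R}^{n+1}$ has dimension at least $3$, so (up to a dimensional constant) $|Y-z|^{1-n}$ is the fundamental solution of $-\Delta$ in $\ree$. I would introduce the Riesz potential
\[
u(Y):=\int_{\Delta(x,r)} |Y-z|^{1-n}\,d\sigma(z),
\]
which is non-negative, harmonic in $\ree\setminus \Delta(x,r)\supset \Omega$, tends to $0$ at infinity, and is bounded and continuous on $\ree$ (finiteness at points of $\Delta(x,r)$, hence continuity, being a routine ADR layer-cake computation). The entire argument rests on three scale-invariant estimates for $u$, all immediate from Ahlfors--David regularity: (i) $u(Y)\geq c_1r$ for every $Y\in B(x,r/2)$, since $|Y-z|\leq 3r/2$ and $\sigma(\Delta(x,r))\geq cr^n$; (ii) $u(Z)\leq C_1 r$ for every $Z\in \ree$, via a layer-cake decomposition combined with the ADR upper bound $\sigma(\partial\Omega\cap B(Z,s))\leq Cs^n$; and (iii) an annular decay estimate $u(Z)\leq C_1 r(r/|Z-x|)^{n-1}$ for $Z\in\partial\Omega$ with $|Z-x|\geq 2r$, obtained from $|Z-z|\geq |Z-x|/2$ together with $\sigma(\Delta(x,r))\leq Cr^n$.

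Next, I would invoke the harmonic measure representation formula. Ahlfors--David regularity forces every point of $\partial\Omega$ to be Wiener-regular for the Dirichlet problem: the $n$-dim ADR lower bound gives $\mathrm{cap}\bigl(B(z,\rho)\cap\partial\Omega\bigr)\gtrsim \rho^{n-1}$, so Wiener's series diverges at every boundary point. Consequently, $u$ is a bounded harmonic function in $\Omega$ with continuous boundary values (vanishing at infinity if $\Omega$ is unbounded), and so agrees with its Perron--Wiener--Brelot solution, i.e.\
\[
u(Y)=\int_{\partial\Omega}u(Z)\,d\omega^Y(Z), \qquad Y\in \Omega.
\]
Fix any $Y_0\in B(x,r/2)\cap \Omega$ and a large integer $K$, and split the right-hand side as $\Delta(x,2^Kr)$ plus its dyadic annular complement $A_k:=\Delta(x,2^{k+1}r)\setminus\Delta(x,2^kr)$, $k\geq K$. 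Using (ii) on $\Delta(x,2^Kr)$ and (iii) on each $A_k$, together with $\omega^{Y_0}(A_k)\leq 1$, one obtains
\[
c_1 r\leq u(Y_0)\leq C_1 r\,\omega^{Y_0}\bigl(\Delta(x,2^Kr)\bigr)+C_2 r\sum_{k\geq K}2^{-k(n-1)}.
\]
Since $n\geq 2$, the tail is at most $C\cdot 2^{-K(n-1)}$; fixing $K=K(n,\mathrm{ADR})$ large enough to render it $<c_1/2$ forces $\omega^{Y_0}\bigl(\Delta(x,2^Kr)\bigr)\geq c_1/(2C_1)=:c_*$. Rescaling $r\mapsto r/2^K$ (i.e., applying the above at radius $r/2^K$) delivers the lemma with $c=2^{-(K+1)}$ and $C=1/c_*$. (In the degenerate case $2^Kr\geq \diam(\partial\Omega)$, the surface ball $\Delta(x,2^Kr)$ exhausts $\partial\Omega$ and the conclusion is trivial.)

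The main obstacle is justifying the representation formula at this level of generality, for which two points must be settled: (a) $u$ admits a continuous extension to $\overline\Omega$ despite the kernel singularity on $\Delta(x,r)$, which reduces (via a standard Riesz-potential continuity argument) to pointwise finiteness of $u$ on $\partial\Omega$, itself a layer-cake/ADR computation of the same type used in (ii); and (b) every boundary point is Wiener-regular, where the ADR lower bound on capacity is decisive. Once these are in place, what remains is an elementary integral splitting against a geometrically convergent series followed by a trivial dilation.
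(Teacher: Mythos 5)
Your proposal is correct, and it is essentially the argument of Bourgain's Lemma 1 in \cite{B}, which is exactly what the paper points to (the paper states this lemma without proof and refers the reader to \cite{B}). The three ADR-driven estimates for the Riesz potential $u(Y)=\int_{\Delta(x,r)}|Y-z|^{1-n}\,d\sigma(z)$ (lower bound near $x$, global $L^\infty$ bound via layer-cake, and $(r/|Z-x|)^{n-1}$ decay), combined with the maximum principle / representation formula and a dyadic annular splitting with a final rescaling, is precisely Bourgain's comparison-function argument; the Wiener regularity you invoke is also the route the paper itself takes (cf.\ its Lemma on uniform $p$-thickness). Two minor points worth noting: your remark about the ``degenerate'' case $2^Kr\geq\diam(\partial\Omega)$ is not quite trivial as stated (one still needs $u(Y_0)\leq C_1 r\,\omega^{Y_0}(\partial\Omega)$, since $\omega^{Y_0}(\partial\Omega)$ may be less than $1$ for unbounded $\Omega$), but that case never arises after the rescaling $r\mapsto r/2^K$ since $2^K\cdot(r/2^K)=r<\diam(\partial\Omega)$ by hypothesis; and in the representation step one should make explicit that $u$ has a limit (namely $0$) at infinity so that no mass escapes to the ideal boundary point, which you do address.
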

We refer the reader to \cite[Lemma 1]{B} for the proof.  We note for future reference that
in particular,  if $\hat{x}\in \pom$
satisfies
$|X-\hat{x}|=\delta(X)$, and
$\Delta_X:= \pom\cap B\big(\hat{x}, 10\delta(X)\big)$,
then for a slightly different uniform constant $C>0$,
\begin{equation}\label{eq2.Bourgain2}
\omega^{X} (\Delta_X) \geq 1/C \;.
\end{equation}
Indeed, the latter bound follows immediately from \eqref{eq2.Bourgain1},
and the fact that we can form a Harnack Chain connecting
$X$ to a point $Y$ that lies on the line segment from $X$ to $\hat{x}$, and satisfies $|Y-\hat{x}|= c\delta(X)$.

A proof of the next lemma may be found, e.g., in \cite{HMT}. 
We note that, in particular,
the ADR hypothesis implies that $\pom$ is Wiener regular at every point (see Lemma \ref{Reiflem3}
below).


\begin{lemma} 
 \label{lemma2.green}
Let $\Omega$ be an open set with $n$-dimensional ADR boundary. There are positive, finite constants $C$, depending only on dimension, $\Lambda$
and $c_\theta$, depending on dimension, $\Lambda$, and $\theta \in (0,1),$
such that the Green function satisfies
\begin{equation}\label{eq2.green}
G(X,Y) \leq C\,|X-Y|^{1-n}\,
\end{equation}
\begin{equation}\label{eq2.green2}
c_\theta\,|X-Y|^{1-n}\leq G(X,Y)\,,\quad {\rm if } \,\,\,|X-Y|\leq \theta\, \delta(X)\,, \,\, \theta \in (0,1)\,;
\end{equation}
\begin{equation}
\label{eq2.green-cont}
G(X,\cdot)\in C(\overline{\Omega}\setminus\{X\}) \qquad \mbox{and}\qquad G(X,\cdot)\big|_{\pom}\equiv 0\,,\qquad \forall X\in\Omega;
\end{equation}
\begin{equation}
\label{eq2.green3}
G(X,Y)\geq 0\,,\qquad \forall X,Y\in\Omega\,,\, X\neq Y;
\end{equation}
\begin{equation}\label{eq2.green4}
G(X,Y)=G(Y,X)\,,\qquad \forall X,Y\in\Omega\,,\, X\neq Y;
\end{equation}
and for every $\Phi \in C_0^\infty(\ree)$,
\begin{equation}\label{eq2.14}
\int_{\partial\Omega} \Phi\,d\omega^X -\Phi(X)
=
-\iint_\Omega
\nabla_Y G(Y,X) \cdot\nabla\Phi(Y)\, dY, \qquad
\forall\, X\in\Omega.
\end{equation}
\end{lemma}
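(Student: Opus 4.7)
The plan is to build $G$ by the Perron-Wiener-Brelot method and then read off each listed property from standard maximum principle and regularity arguments. The first step is to verify that every boundary point of $\Omega$ is Wiener regular: as noted in the text, this follows from the ADR hypothesis via the capacity density condition (Lemma \ref{Reiflem3}). With Wiener regularity in hand, I would set, for each $X\in\Omega$,
\[
G(X,Y):=\Gamma(X-Y)-h_X(Y),\qquad Y\in\Omega,
\]
where $\Gamma$ is the usual fundamental solution of $-\Delta$ in $\ree$ and $h_X$ is the PWB solution of the Dirichlet problem in $\Omega$ with data $\Gamma(X-\cdot)|_{\pom}$. When $\Omega$ is unbounded, one first does this in truncations $\Omega_R:=\Omega\cap B(X,R)$, where the boundary data is continuous and bounded, and takes the monotone limit as $R\to\infty$. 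Non-negativity \eqref{eq2.green3} is immediate from the maximum principle, and \eqref{eq2.green-cont} (including vanishing on $\pom$) follows from Wiener regularity at each boundary point.

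For the upper bound \eqref{eq2.green}, I would observe that $h_X\geq 0$ by the maximum principle, whence $G(X,Y)\leq \Gamma(X-Y)\lesssim |X-Y|^{1-n}$. For \eqref{eq2.green2}, note that $h_X$ is harmonic in $\Omega$ and bounded on $\pom$ by $\sup_{y\in\pom}|X-y|^{1-n}\lesssim \delta(X)^{1-n}$; the maximum principle propagates this bound into $\Omega$. Therefore, when $|X-Y|\leq \theta\delta(X)$,
\[
G(X,Y)\geq c_n|X-Y|^{1-n} - C\delta(X)^{1-n} \geq \bigl(c_n - C\theta^{n-1}\bigr)|X-Y|^{1-n},
\]
which is $\gtrsim |X-Y|^{1-n}$ once $\theta$ is taken small. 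For the remaining range $\theta\in(0,1)$, a Harnack chain argument inside $B(X,\theta\delta(X))\subset\Omega$ upgrades the constant to the claimed $\theta$-dependent $c_\theta$.

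The symmetry \eqref{eq2.green4} is the classical fact that $-\Delta$ is self-adjoint: on each smooth approximating domain where the Green function has better regularity, symmetry follows from a standard integration by parts, and the property is preserved under the monotone limit. Finally, the representation \eqref{eq2.14} is the distributional identity $-\Delta_Y G(Y,X)=\delta_X$ tested against $\Phi\in C_0^\infty(\ree)$, once one knows that $\omega^X$ arises as the weak normal derivative of $G$ on $\pom$. This is obtained by integrating by parts on a smooth exhaustion $\{\Omega_j\}$ of $\Omega$, and passing to the limit using \eqref{eq2.green-cont} together with Caccioppoli bounds on $G$ near $\pom$.

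The main obstacle will be justifying these manipulations in the non-connected, possibly unbounded setting, and in particular pushing \eqref{eq2.14} to a merely ADR (not Lipschitz) boundary: one needs enough uniform control on $\nabla_Y G$ in Whitney collars of $\pom$ to show that the boundary term in the integration by parts converges to $-\int \Phi\,d\omega^X$. This is handled by combining Caccioppoli on thin Whitney neighborhoods with the De Giorgi-Nash-Moser Hölder continuity of $G$ up to $\pom$ (a consequence of the capacity density condition). The full technical execution is carried out in \cite{HMT}, and the present lemma merely catalogs what is needed here.
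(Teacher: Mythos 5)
The paper itself gives no proof of this lemma; it simply cites \cite{HMT} and points to Lemma \ref{Reiflem3} for the Wiener regularity that underlies the construction. Your sketch follows the route one would expect to find there (Perron--Wiener--Brelot definition of $G$, positivity and the upper bound from the maximum principle, the lower bound near the pole by subtracting off the bounded corrector, symmetry by approximation through smoother domains, and the Riesz-type identity \eqref{eq2.14} by integration by parts on an exhaustion), and it is correct in outline.

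Two small remarks. First, in the Harnack-chain step for \eqref{eq2.green2} with $\theta$ close to $1$, the chain cannot simply live ``inside $B(X,\theta\delta(X))$'': it must be arranged to avoid a fixed small neighborhood of the pole $X$, where $G(X,\cdot)$ is singular, as well as to stay a distance $\gtrsim(1-\theta)\delta(X)$ from $\pom$. One should chain in an annulus $\{\theta_0\delta(X)\le |Z-X|\le\theta\delta(X)\}$ with $\theta_0\lesssim 1-\theta$; the number of balls then grows as $\theta\uparrow 1$, which is precisely why $c_\theta$ degenerates. Second, the constant $\Lambda$ appearing in the statement is not accounted for in your sketch; it is the ellipticity constant carried over from the general divergence-form setting of \cite{HMT} and is vacuous for the Laplacian, so this is cosmetic rather than a gap. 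The genuinely delicate point --- justifying the boundary integration by parts behind \eqref{eq2.14} when $\pom$ is merely ADR, so that harmonic measure is identified with the weak co-normal derivative of $G$ --- is exactly what you (correctly) defer to \cite{HMT}; thus your proposal is no more self-contained than the paper, but it is an accurate account of what is involved.
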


Next we present a version of one of the estimates obtained 
by Caffarelli-Fabes-Mortola-Salsa in \cite{CFMS}, which remains true even in the absence of connectivity:

\begin{lemma}[``CFMS" estimates]\label{l2.10}
Suppose that $\partial \Omega$ is $n$-dimensional ADR.  
For every
$Y\in \Omega$ and $X\in\Omega$ such that $|X-Y|\ge \delta(Y)/2$ we have
\begin{equation}\label{eqn:right-CFMS}
\frac{G(Y,X)}{\delta(Y)}
\le
C\,\frac{\hm^X( \Delta_Y)}{\sigma( \,\Delta_Y)},
\end{equation}
where $\Delta_Y=B(\hat{y},10\delta(Y))\cap E$, with $\hat{y}\in\pom$ such that $|Y-\hat{y}|=\delta(Y)$.
\end{lemma}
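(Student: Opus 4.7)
The plan is to exploit the symmetry of the Green function, $G(Y,X)=G(X,Y)$ (cf.~\eqref{eq2.green4}), and to study $u(Z) := G(Z,Y)$ as a function of $Z$. By \eqref{eq2.green-cont}, $u$ is harmonic in $\Omega\setminus\{Y\}$ and vanishes continuously on $\pom$; by \eqref{eq2.green}, it also decays at infinity. I will compare $u$ to a harmonic barrier built from harmonic measure, using the maximum principle on the subdomain
\[
R := \Omega \setminus \overline{B(Y,r)}\,,\qquad r := \delta(Y)/4.
\]
Since $r<\delta(Y)$, we have $\overline{B(Y,r)}\subset \Omega$, so $R$ is a genuine open subset of $\Omega$ with $Y\notin R$, and hence $u$ is harmonic throughout $R$. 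The hypothesis $|X-Y|\ge \delta(Y)/2>r$ ensures that $X\in R$, so the bound we derive in $R$ may be evaluated at $X$.

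The barrier will be $v(Z) := M\,\omega^Z(\Delta_Y)$ with $M$ a fixed multiple of $\delta(Y)^{1-n}$. On the inner sphere $\partial B(Y,r)$, \eqref{eq2.green} gives $u(Z)\le C|Z-Y|^{1-n}\le C_1\delta(Y)^{1-n}$. Against this I need the lower bound $\omega^Z(\Delta_Y)\ge c_0>0$ for every $Z\in\partial B(Y,r)$: since $|Z-\hat y|\le r+\delta(Y)\le \tfrac{5}{4}\delta(Y)$, such a bound follows from Bourgain's Lemma~\ref{Bourgainhm} applied to the surface ball $\Delta_Y=\Delta(\hat y,10\delta(Y))$ with pole $Z$ (the factor $10$ being chosen relative to Bourgain's constant $c$ so that $Z$ lies in the admissible pole region $B(\hat y, 10c\,\delta(Y))$). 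Choosing $M := c_0^{-1}C_1\delta(Y)^{1-n}$, we obtain $u\le v$ on $\partial B(Y,r)$, while on $\pom\cap\overline R$ we have $u\equiv 0\le v$ trivially. Both $u$ and $v$ are harmonic in $R$, and in the unbounded case $u\to 0$ at infinity while $v$ is bounded, so the maximum principle yields $u\le v$ throughout $R$. Evaluating at $X\in R$ and invoking symmetry once more,
\[
G(Y,X)=u(X)\le M\,\omega^X(\Delta_Y) \,\lesssim\, \delta(Y)^{1-n}\,\omega^X(\Delta_Y).
\]
Dividing by $\delta(Y)$ and using the ADR estimate $\sigma(\Delta_Y)\approx \delta(Y)^n$ (Definition~\ref{defadr}) produces \eqref{eqn:right-CFMS}.

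The main obstacle is precisely the lower bound $\omega^Z(\Delta_Y)\gtrsim 1$ on the sphere $\partial B(Y,r)$. In the connected (e.g.\ NTA) setting one would immediately transfer the non-degeneracy bound \eqref{eq2.Bourgain2} from $Y$ to $Z$ along a Harnack chain, but here no such chain is available, so Bourgain's lemma must be invoked directly at each $Z$, keeping careful track of the relationship between the pole $Z$ and the \emph{fixed} surface ball $\Delta_Y$ (rather than the more natural $\Delta_Z$ centered at the closest boundary point of $Z$). The unbounded-domain case of the maximum principle is a minor technicality, handled by the $|Z-Y|^{1-n}$ decay of $G(\cdot,Y)$ at infinity and the boundedness of harmonic measure.
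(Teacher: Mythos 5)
Your overall maximum-principle strategy is the same as the paper's: compare $G(\cdot,Y)$ with a constant multiple of $\omega^{(\cdot)}(\Delta_Y)$ on the open set $\Omega\setminus\overline{B(Y,r)}$, show the inequality on the inner sphere and on $\pom$, and propagate it. The gap is in the only nontrivial step, the lower bound $\omega^Z(\Delta_Y)\gtrsim 1$ for $Z\in\partial B(Y,r)$. You claim this follows by invoking Lemma \ref{Bourgainhm} ``directly at each $Z$,'' with $\Delta_Y = \Delta(\hat y,10\delta(Y))$. For this you need $Z$ to lie in the admissible pole region $B(\hat y, 10c\,\delta(Y))$, and since $|Z-\hat y|$ can be as large as $\tfrac54\delta(Y)$ (with your choice $r=\delta(Y)/4$), the argument requires $10c\ge 5/4$, i.e.\ $c\ge 1/8$. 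But $c$ is the constant from Bourgain's lemma; it depends on $n$ and the ADR constant, is fixed by the lemma, and there is no reason it is as large as $1/8$. The factor $10$ in the definition of $\Delta_Y$ is also fixed by the statement of Lemma \ref{l2.10} and cannot be enlarged a posteriori to absorb a small $c$. Your parenthetical ``the factor 10 being chosen relative to Bourgain's constant'' is exactly where the proof breaks: no such relationship is stated or true.

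Moreover, the reasoning that led you to reject a Harnack-chain argument is incorrect. You write ``here no such chain is available,'' but the chain one needs is purely local: every point $Z\in\partial B(Y,r)$ can be connected to any other point of that sphere (or to a designated point $X_0$ on it) by a Harnack chain lying entirely inside $B(Y,\delta(Y))\subset\Omega$. The absence of a global Harnack Chain \emph{condition} on $\Omega$ is irrelevant for a chain that stays inside one interior ball. This is precisely how the paper closes the estimate: take $r=\delta(Y)/2$ and let $X_0$ be the point of $\partial B(Y,\delta(Y)/2)$ on the segment from $Y$ to $\hat y$; then $\hat y$ is the boundary point nearest $X_0$, $\delta(X_0)=\delta(Y)/2$, and $2\Delta_{X_0}=\Delta_Y$, so the boundary nondegeneracy \eqref{eq2.Bourgain2} gives $\omega^{X_0}(\Delta_Y)\ge\omega^{X_0}(\Delta_{X_0})\gtrsim 1$. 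Then one runs Harnack \emph{inside} $B(Y,\delta(Y))$ to transfer this lower bound from $X_0$ to every $Z$ on the inner sphere. (Note also that \eqref{eq2.Bourgain2} itself is proved by precisely such an interior Harnack chain from $X$ to a point at distance $c\delta(X)$ from $\hat x$, followed by a genuine application of \eqref{eq2.Bourgain1}; it is not a restatement of Lemma \ref{Bourgainhm}.) If you replace your direct Bourgain invocation by this $X_0$-plus-local-Harnack step, your proof becomes correct and agrees with the paper's, up to the cosmetic difference of using $r=\delta(Y)/4$ instead of $\delta(Y)/2$.
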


For future use, we note that as a consequence of \eqref{eqn:right-CFMS}, it follows directly
that for every $Q\in\dd(\pom)$, 
if $Y\in  B\big(x_Q,C\ell(Q)\big)$, with $\delta(Y)\geq c\ell(Q)$, 
then there exists $\kappa=\kappa(C,c)$ such that
\begin{equation}\label{eqn:right-CFMS:cubes}
\frac{G(Y,X)}{\ell(Q)}
 \lesssim \,\frac{\hm^X(\kappa Q)}{\sigma(Q)} \,
 \lesssim  \kappa^n \left(\fint_Q \left(\M\hm^X\right)^{1/2} \, d\sigma\right)^2,
\qquad \forall\,X\notin B\big(x_Q,\kappa \ell(Q)\big)\,,
\end{equation}
where $\kappa Q$ is defined in \eqref{dilatecube}, and
 $\M$ is the usual Hardy-Littlewood maximal operator on 
$\pom$.  



\begin{proof}[Proof of Lemma \ref{l2.10}]
We follow the 
well-known argument of  \cite{CFMS} (see also \cite[Lemma 1.3.3]{Ke}). Fix $Y\in \Omega$ and 
write $B^Y=\overline{B(Y,\delta(Y)/2)}$. Consider the open set $\widehat{\Omega}=\Omega\setminus B^Y$ 
for which clearly $\partial\widehat{\Omega}=\pom\cup \partial B^Y$.   Set 
$$u(X):=G(Y,X)/\delta(Y)\,,\qquad v(X):=\hm^X(\Delta_Y)/\sigma( \Delta_Y)\,,$$
for every $X\in \widehat{\Omega}$. Note 
that both $u$ and $v$ are non-negative harmonic functions in 
$\widehat{\Omega}$. If $X\in \pom$ then $u(X)=0\le v(X)$. Take now $X\in\partial B^Y$ 
so that $u(X)\lesssim \delta(Y)^{-n}$ by \eqref{eq2.green}. On the other 
hand, if we fix $X_0\in\partial B^Y$ with $X_0$ on the line
segment that joints $Y$ and $\hat{y}$, then $2\Delta_{X_0} = \Delta_Y$, so that
$v(X_0) \gtrsim \delta(Y)^{-n}$, by  \eqref{eq2.Bourgain2}.  By Harnack's inequality,
we then obtain $v(X) \gtrsim \delta(Y)^{-n}$, for all $X\in \partial B^Y$.
Thus, $u\lesssim v$ in $\partial\widehat{\Omega}$ and by the
maximum principle this immediately extends to $\widehat{\Omega}$ as desired.
\end{proof}

\begin{lemma}\label{lemma:G-aver}
Let  $\pom$ be $n$-dimensional ADR. Let $B=B(x,r)$ with $x\in\pom$ and $0<r<\diam(\pom)$, and set $\Delta=B\cap \pom$. There exist constants $\kappa_0>2$, $C>1$, and  $M_1>1$, depending only on 
$n$ and the ADR constant of $\pom$, such that
for $X\in\Omega\setminus B(x,\kappa_0r)$,  we have
\begin{equation}\label{eqn:aver-B}
\sup_{\frac12B} G(\cdot,X) \lesssim  \frac1{|B|}\iint_B G(Y,X)\,dY
\,\le \,
C\,r\,\,\frac{\hm^X\big(\Delta(x,M_1r)\big)}{\sigma(\Delta)}.
\end{equation}
Moreover, for each $\gamma \in (0,1]$
\begin{equation}\label{eqn:aver-B2}
\frac1{|B|}\iint_{B\cap \{Y:\,\delta(Y)<\gamma r\}} G(Y,X)\,dY
\,\le \,
C\,\gamma^2 r\,\,\frac{\hm^X\big(\Delta(x,M_1r)\big)}{\sigma(\Delta)}.
\end{equation}
where $C$ depends on $n$ and the ADR constant of $\pom$.
\end{lemma}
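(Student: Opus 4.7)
The plan is to split the two inequalities in \eqref{eqn:aver-B}: the first, an $L^\infty$-to-$L^1$ comparison, will follow from a sub-mean value inequality applied to $G(\cdot,X)$ extended by zero across $\pom$, while the second (main) bound will combine the pointwise CFMS estimate \eqref{eqn:right-CFMS} with Fubini. The truncated estimate \eqref{eqn:aver-B2} will then drop out of a routine modification of the last step.

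First I would extend $G(\cdot,X)$ by zero to $u:\ree\setminus\{X\}\to[0,\infty)$; by \eqref{eq2.green-cont} $u$ is continuous off $X$. Fixing $\kappa_0>2$, so that $X\notin 2B$, I would take an arbitrary nonnegative $\Phi\in C_0^\infty(2B)$, which satisfies $\Phi(X)=0$; then \eqref{eq2.14} reads $\iint_\Omega \nabla G\cdot\nabla\Phi=-\int_{\pom}\Phi\,d\omega^X\leq 0$. Combining this with integration by parts (justified in the rough ADR setting via a standard cutoff of $\Phi$ away from $\pom$, using $G|_{\pom}\equiv 0$) yields $\iint u\,\Delta\Phi\geq 0$, so $u$ is distributionally subharmonic on $2B$. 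Applying the continuous sub-mean value inequality on $B(Y,r/2)\subset B$ for $Y\in\tfrac12 B$ then gives
$$
G(Y,X)\,\leq\,\fint_{B(Y,r/2)} u(Z)\,dZ\,\lesssim\, \frac{1}{|B|}\iint_B G(Z,X)\,dZ,
$$
which is the first inequality in \eqref{eqn:aver-B}.

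For the main inequality, I would enlarge $\kappa_0$ if necessary so that every $Y\in B$ satisfies $|X-Y|\geq\delta(Y)/2$, so Lemma \ref{l2.10} applies pointwise:
$$
G(Y,X)\,\lesssim\,\delta(Y)\,\frac{\omega^X(\Delta_Y)}{\sigma(\Delta_Y)}\,=\,\int_{\pom}\frac{\delta(Y)\,\mathbf{1}_{\Delta_Y}(z)}{\sigma(\Delta_Y)}\,d\omega^X(z),
$$
with $\Delta_Y=B(\hat y,10\delta(Y))\cap\pom$. Integrating over $Y\in B$ and swapping the order of integration reduces matters to bounding, uniformly in $z\in\pom$, the inner integral $I(z):=\iint_B \delta(Y)\,\mathbf{1}_{\Delta_Y}(z)\,\sigma(\Delta_Y)^{-1}\,dY$. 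The indicator forces $|z-\hat y|<10\delta(Y)$, hence $\delta(Y)\leq|z-Y|<11\delta(Y)$, so $\delta(Y)\approx|z-Y|$; it also forces (since $\delta(Y)\leq r$ for $Y\in B$) $z\in\Delta(x,M_1 r)$ for a suitable dimensional constant $M_1$. Splitting the admissible $Y$'s by dyadic scales $|z-Y|\approx 2^{-j}r$, ADR gives $\sigma(\Delta_Y)\approx (2^{-j}r)^n$ and each shell has volume $\lesssim(2^{-j}r)^{n+1}$, so the $j$th contribution to $I(z)$ is $\lesssim(2^{-j}r)^2$. Summing the geometric series gives $I(z)\lesssim r^2$, and dividing the resulting bound on $\iint_B G\,dY$ by $|B|\approx r\,\sigma(\Delta)$ produces \eqref{eqn:aver-B}. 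For \eqref{eqn:aver-B2}, the extra constraint $\delta(Y)<\gamma r$ truncates the dyadic sum at $2^{-j}\lesssim\gamma$, yielding $I(z)\lesssim(\gamma r)^2$ and hence the $\gamma^2$ improvement.

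The hard part will be the first step: justifying the subharmonicity of the zero extension across the arbitrary ADR boundary $\pom$ without any regularity hypothesis, which rules out a naive boundary integration by parts. The representation \eqref{eq2.14}, together with a cutoff of $\Phi$ away from $\pom$ and a limiting argument, delivers it cleanly; all remaining steps are bookkeeping using ADR and a convergent geometric series.
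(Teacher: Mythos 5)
Your proof is correct, and the main estimate is driven by the same two engines as the paper's proof: the CFMS comparison of Lemma \ref{l2.10} and a dyadic decomposition of the region near $\pom$ organized by the scale of $\delta(Y)$. The technical packaging, however, is different. The paper covers $B\cap\{\delta(Y)<\gamma r\}$ by Whitney cubes $I$ with $\ell(I)\approx\delta(Y)$, applies the discrete CFMS estimate \eqref{eqn:right-CFMS:cubes} at each $I$ (with a nearby boundary cube $Q_I$), and sums scale by scale using the bounded overlap of the dilated cubes $\kappa Q_I$. You instead apply the pointwise CFMS estimate \eqref{eqn:right-CFMS} directly, write it as an integral in $z\in\pom$ against $d\omega^X$, apply Fubini, and bound $\iint_B\delta(Y)\mathbf{1}_{\Delta_Y}(z)\sigma(\Delta_Y)^{-1}\,dY$ uniformly in $z$ via dyadic shells $|z-Y|\approx 2^{-j}r$; this replaces the bounded-overlap bookkeeping with a geometric-series bound. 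Both yield the same $\gamma^2$ gain and the same $M_1$-enlarged surface ball, so they are of comparable difficulty; your version has the minor aesthetic advantage of avoiding the dyadic grid on $\pom$, while the paper's version localizes more naturally to Whitney boxes, which is the geometry it uses elsewhere. For the first inequality, the paper simply asserts that the zero extension of $G(\cdot,X)$ is subharmonic in $B$ and invokes the sub-mean-value property; you re-derive the distributional subharmonicity from the Riesz formula \eqref{eq2.14} plus a cutoff and limiting argument. That argument is valid (using the boundary H\"older decay of $G$ to kill the cutoff error term), but you could have avoided the weak formulation altogether: $G(\cdot,X)$ is nonnegative and continuous with $G|_{\pom}\equiv 0$, and a nonnegative continuous function that is harmonic on its positivity set and vanishes elsewhere is automatically subharmonic (compare the harmonic replacement on any small ball), which is the soft fact the paper has in mind.
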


We note that in the previous estimates it is implicitly understood that $G(\cdot,X)$ is extended to be $0$ outside of $\Omega$.

\begin{proof} Extending $G(\cdot,X)$ to be 0 outside of $\om$, we obtain a sub-harmonic function in
$B$.  The first inequality in \eqref{eqn:aver-B} follows immediately.  
The second inequality in \eqref{eqn:aver-B} is just the special case $\gamma =1$ of \eqref{eqn:aver-B2},
so it suffices to prove the latter.
Set $\Sigma_\gamma=\{I\in \W: I\cap B\neq\emptyset, \dist(I,\pom) < \gamma r\}$, 
and note that if $I\in\Sigma_\gamma$ then by \eqref{eqWh1}
$$
40^{-1}\dist(I,\pom)\le \diam(I)\leq\dist(I,\pom)< \gamma r\leq r\,,\qquad \dist(I,x)\le r\,.$$ 
In particular, $I\subset B(x,2r)$.  Moreover, we can find $\kappa_0$ depending only dimension so that $d(X,4I)\ge 4r$ for every $I\in \Sigma_\gamma$ and $X\in\Omega\setminus B(x,\kappa_0r)$. Let $Q_I\in\dd$ be so 
that $\ell(Q_I)=\ell(I)$ and $\dist(I,\pom)=\dist(I,Q_I)$. Then $\ell(Q_I)\leq \gamma r$, and 
$Y(I)$, the center of $I$, satisfies 
$Y(I)\in B\big(x_{Q_I},C\ell(Q_I)\big)$, and $\delta(Y(I))\approx \ell(I)= \ell(Q_I)$. Hence we 
can invoke \eqref{eqn:right-CFMS:cubes} (taking $\kappa_0$ larger if needed) and obtain that
for every $Y\in I$,
$$
G(Y,X)\approx G(Y(I),X)
\lesssim \ell(I)\,\frac{\hm^X(\kappa Q_I)}{\sigma(Q_I)},
$$
where the first estimate uses Harnack's inequality in $2I\subset \Omega$.
Hence,
\begin{multline*}
\iint_{B\cap\{Y:\, \delta(Y)<\gamma r\}} G(Y,X)\,dY
\le
\sum_{I\in\Sigma_\gamma} \iint_I G(Y,X)\,dY
\lesssim
\sum_{I\in\Sigma_\gamma} \ell(I)^2\,\hm^X(\kappa Q_I)
\\[4pt]
\le\,
\sum_{k:2^{-k}\lesssim \gamma r}2^{-2\,k}\sum_{I\in\Sigma_\gamma: \ell(I)=2^{-k}} \hm^X(\kappa Q_I)
\lesssim
(\gamma r)^2\,\hm^X\big(\Delta(x,M_1r)\big)\,,
\end{multline*}
where the last step we have used that for each fixed $k$, the cubes 
$\kappa Q_I$ with $\ell(I)=2^{-k}$ 
have uniformly bounded overlaps, and are all contained in $\Delta(x,M_1r)$, 
for $M_1$ chosen large enough.   Dividing by
$|B|\approx r^{n+1}$, and using the ADR property,
we obtain the desired estimate.
\end{proof}

\subsection{PDE estimates: the $p$-harmonic case}
We now recall several fundamental estimates for $p$-harmonic functions and $p$-harmonic measure,
some of which generalize certain of the preceding estimates that we have stated in the harmonic case.
We ask the reader to forgive a moderate amount of redundancy.  Given a closed set $E$,
as above we set $\delta(Y):=\dist(Y,E)$.

\begin{lemma}\label{lem2.1}
	Let $p$, $1<p<\infty$, be given. Let
 $ u $  be  a
positive
$p$-harmonic function in $B (X,2r)$.  Then 
	\begin{equation}\label{lem2.1:(i)}
				\left(\frac1{|B(X,r/2)|}\,  \iint_{B ( X, r/2)} \, | \nabla u |^{ p } \, dy\right)^\frac1p  \leq  \frac{C}{r} \,
		\max_{B ( X,r)}  u,
		\end{equation}
		\begin{equation}
			\max_{B ( X, r ) } \, u  \leq  C \min_{ B ( X, r )}  u .\label{lem2.1:(ii)}
	\end{equation}
	Furthermore, there exists $\alpha=\alpha(p,n)\in(0,1)$ such that if $Y, Y' \in B ( X, r )$, then
	\begin{align}\label{lem2.1:(iii)}
		\ | u ( Y ) - u ( Y' ) |   \leq  C\, \biggl( { \frac{ | Y - Y' |}{r} } \biggr)^{\alpha} \, { \max_{B (
X, 2 r )} } \, u.
	\end{align}
\end{lemma}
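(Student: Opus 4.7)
The plan is to recognize that all three statements are classical local regularity results for positive $p$-harmonic functions, going back to the work of Serrin, Trudinger and others, and available in detailed form in the monograph of Heinonen-Kilpel\"ainen-Martio \cite{HKM}. Since the paper will use these estimates as black boxes in later sections, I would state the lemma with references and then include a brief sketch to fix the constants and the dependencies ($p$, $n$, and the ADR constant are not needed here---only $p$ and $n$).

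For \eqref{lem2.1:(i)}, I would run the standard Caccioppoli argument. Fix a cutoff $\eta\in C_0^\infty(B(X,r))$ with $\eta\equiv 1$ on $B(X,r/2)$, $0\le \eta\le 1$, and $|\nabla\eta|\le C/r$, and test the weak formulation \eqref{1.1} with $\phi=\eta^p u$ (which is admissible since $u>0$ and $u\in W^{1,p}_{\rm loc}$). This yields
\begin{equation*}
\iint \eta^p |\nabla u|^p\, dy
\;=\; -p \iint u\, \eta^{p-1} |\nabla u|^{p-2} \nabla u \cdot \nabla \eta\, dy,
\end{equation*}
after which H\"older's and Young's inequalities absorb the gradient term, giving
\begin{equation*}
\iint_{B(X,r/2)} |\nabla u|^p\, dy \;\le\; \frac{C}{r^p}\iint_{B(X,r)} u^p\, dy
\;\le\; \frac{C}{r^p}\, |B(X,r)|\, \bigl(\max_{B(X,r)} u\bigr)^p,
\end{equation*}
which is \eqref{lem2.1:(i)} after normalizing by $|B(X,r/2)|\approx |B(X,r)|\approx r^{n+1}$.

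For \eqref{lem2.1:(ii)}, the Harnack inequality, I would simply cite the classical theorem of Serrin-Trudinger (see also \cite[Chapter 6]{HKM}): positive weak solutions of the $p$-Laplace equation in $B(X,2r)$ satisfy a scale-invariant Harnack inequality with constant depending only on $p$ and $n$. The proof is a Moser iteration, which requires Caccioppoli-type inequalities for powers $u^q$ (obtained by testing with $\eta^p u^{q}$ for various $q$, including negative $q$ using $u>0$), combined with Sobolev embedding and the John-Nirenberg lemma to close the gap between sub- and super-level iteration. Finally, \eqref{lem2.1:(iii)} is the standard oscillation decay consequence of Harnack: writing $\mathrm{osc}(u,B)=\max_B u - \min_B u$, apply \eqref{lem2.1:(ii)} to the non-negative $p$-harmonic functions $\max_{B(X,2r)}u - u$ and $u-\min_{B(X,2r)}u$ on each ball $B(X, 2^{-k}r)$ to obtain $\mathrm{osc}(u,B(X,2^{-k-1}r)) \le \theta\,\mathrm{osc}(u,B(X,2^{-k}r))$ for some $\theta=\theta(p,n)<1$, and iterate to obtain the H\"older modulus with exponent $\alpha=\log_2(1/\theta)\in (0,1)$.

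The main technical point (rather than an obstacle, since everything is classical) is that the $p$-Laplace operator is degenerate/singular at critical points of $u$, so none of the arguments above may use linearity or the non-vanishing of $\nabla u$; all inequalities must be derived purely from the weak formulation \eqref{1.1} and the monotonicity structure $|\xi|^{p-2}\xi$. Since this is handled once and for all in \cite{HKM}, I would not reproduce the Moser iteration in detail and would simply invoke the relevant chapters of that reference for \eqref{lem2.1:(ii)} and \eqref{lem2.1:(iii)}.
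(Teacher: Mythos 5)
Your proposal is correct and takes essentially the same approach as the paper, which simply records that \eqref{lem2.1:(i)} is a standard energy estimate, that \eqref{lem2.1:(ii)} and \eqref{lem2.1:(iii)} are the classical Harnack inequality and interior H\"older estimate, and refers to Serrin \cite{S} for all three. Your additional Caccioppoli sketch and oscillation-decay argument are accurate fillers for these black boxes but go beyond what the paper chooses to write out.
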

\begin{proof}  
\eqref{lem2.1:(i)} is  a standard energy estimate. \eqref{lem2.1:(ii)}  is the well known Harnack
inequality for positive solutions to the  $ p$-Laplace operator.  \eqref{lem2.1:(iii)}   is a well known   interior H\"{o}lder continuity
estimate  for solutions to equations of  $ p$-Laplace type. We refer to \cite{S} for these results.\end{proof}

\begin{definition}\label{adf} Let $O\subset \ree$ be open and let $K$ be a compact subset of $O$. Given $p$,
    $1<p<\infty$, we let
$$
\mbox{Cap}_{p}(K,O)=\inf\left\{\iint_O|\nabla\phi|^p\, dY:\ \phi\in C_0^\infty(O),
\ 
\phi\geq 1\mbox{ in }K\right\}.
$$
$\mbox{Cap}_{p}(K,O)$ is referred to as the $p$-capacity of  $K$ relative to $O$. The $p$-capacity of an arbitrary set
$E\subset O$ is  defined  by
\begin{equation}\label{cap1}
\mbox{Cap}_{p}(E,O)
=
\inf_{\substack{E\subset G\subset O\\G\text{ open}}}\ \ \ \ 
\sup_{\substack{K\subset G\\K\text{ compact}}} \mbox{Cap}_{p}(K,O).
\end{equation}
\end{definition}

\begin{definition}\label{def1.5-+}  Let $E\subset\ree$ be a closed set and let $x\in E$, $0<r<\diam(E)$. Given $p$, $1<p<\infty$, 
 we say that $E\cap B(x,4r)$ is $p$-thick if for every $x\in E\cap B(x,4r)$ there exists $r_x>0$ such that
\begin{equation*}
\int_0^{r_{x}}\,\left [\frac{\mbox{Cap}_p( E\cap B(x,\rho),B(x,2\rho))}
{\mbox{Cap}_p(B(x,\rho),B(x,2\rho))}\right ]^{\frac1{p-1}}\frac {d\rho} \rho=\infty
\end{equation*}
 \end{definition}

We note that this definition is just the Wiener criterion in the $p$-harmonic case. As it can be seen in \cite[Chapter 6]{HKM} $p$-thickness implies that all points on $E\cap B(x,4r)$ are regular for the continuous Dirichlet problem for  $\nabla \cdot (|\nabla u|^{p-2}\nabla u)=0$.

 \begin{definition}\label{def1.5-} 
Let $E\subset\ree$ be a closed set and let $x\in E$, $0<r<\diam(E)$. Given $p$, $1<p<\infty$, and $\eta>0$ we say that  $E\cap B(x,4r)$ is uniformly $p$-thick with constant $\eta$ if
\begin{equation}
\frac{\mbox{Cap}_p(E\cap B(\hat x,\hat r),B(\hat x,2\hat r))}
{\mbox{Cap}_p(B(\hat x,\hat r),B(\hat x,2\hat r))}\geq \eta,
\label{eq:eta-CDC}
\end{equation}
whenever $\hat x\in E\cap B(x,4r)$ and $B(\hat x,2\hat r)\subset B(x,4r)$. 
\end{definition}

 \begin{remark}\label{cden}
In the case $p=2$, the condition defined in Definition \ref{def1.5-} is sometimes called the
Capacity Density Condition (CDC), see for instance \cite{Aikawa}. Note that uniform $p$-thickness is a strong quantitative version of the $p$-thickness defined above and hence of the Wiener regularity for the Laplace and the $p$-Laplace operator.
\end{remark}

\begin{lemma}\label{Reiflem3} Let $E \subset \ree$, $n\ge 2$, be Ahlfors-David regular of dimension $n$. Let $p$, $1<p<\infty$, be given.Then  $E\cap B(x,4r)$ is uniformly $p$-thick
for some constant $\eta$, depending only on $p$, $n$, and the ADR constant,  whenever $x\in E$, $0<r<\frac14\diam E$.
\end{lemma}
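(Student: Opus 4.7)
The plan is, for any $\hat x \in E \cap B(x, 4r)$ and $\hat r > 0$ with $B(\hat x, 2\hat r) \subset B(x, 4r)$, to establish the single-scale lower bound
\begin{equation*}
\mbox{Cap}_p(E \cap B(\hat x, \hat r), B(\hat x, 2\hat r)) \gtrsim \hat r^{n+1-p},
\end{equation*}
with implicit constant depending only on $n$, $p$, and the ADR constant. Since the classical direct computation gives $\mbox{Cap}_p(B(\hat x, \hat r), B(\hat x, 2\hat r)) \approx \hat r^{n+1-p}$ (with the standard logarithmic modification when $p = n+1$, and the evident simplification when $p > n+1$), the ratio in \eqref{eq:eta-CDC} will then be bounded below by a uniform $\eta$ as required.

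First I would pass from ADR measure to Hausdorff content: the ADR bound \eqref{eq1.ADR} gives $H^n(E \cap B(\hat x, \hat r)) \gtrsim \hat r^n$ (the hypothesis $r < \tfrac14 \diam E$ ensures $\hat r < \diam E$), and a standard covering argument, using the upper ADR bound on each covering ball, converts this into the corresponding bound for the $n$-dimensional Hausdorff content,
\begin{equation*}
H^n_\infty(E \cap B(\hat x, \hat r)) \gtrsim \hat r^n.
\end{equation*}
I would then invoke the capacity-vs.-Hausdorff-content inequality from nonlinear potential theory: for $1 < p < \infty$, every compact $F \subset B(z, \rho) \subset \ree$, and every $s$ with $\max\{0, n+1-p\} < s \leq n+1$,
\begin{equation*}
\mbox{Cap}_p(F, B(z, 2\rho)) \gtrsim \rho^{n+1-p-s}\, H^s_\infty(F),
\end{equation*}
the implicit constant depending only on $n$, $p$, and $s$; see, e.g., Chapter 2 of \cite{HKM}. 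Applying this with $s = n$ and $F = E \cap B(\hat x, \hat r)$, the admissibility condition $s > n+1-p$ reduces precisely to $p > 1$, and the content lower bound above yields the desired capacity lower bound.

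The main obstacle is the capacity-content inequality itself, as it is the only non-elementary ingredient. A route to it from scratch would proceed via Frostman's lemma: one extracts from $F$ a positive measure $\mu$ with polynomial growth $\mu(B(y,t)) \leq t^s$ and $\mu(F) \gtrsim H^s_\infty(F)$, and then uses either a Wolff-potential estimate together with duality, or a direct Riesz-convolution construction of an admissible test function, to convert the mass bound into a $p$-capacity lower bound. The condition $s > n+1-p$ is precisely what forces integrability of the Wolff potential of $\mu$ near its support, and in our codimension one setting (where we take $s = n$) this is exactly why the lemma must require $p > 1$. Combining this with the ADR input and the explicit capacity of balls then gives $\eta$ depending only on $n$, $p$, and the ADR constant, as desired.
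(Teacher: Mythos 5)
Your proof is correct, and it takes a route closely parallel to, but not identical with, the paper's. Both arguments ultimately rest on the same nonlinear-potential-theoretic input, namely a Wolff-potential lower bound for the $p$-capacity of a set supporting a measure with polynomial growth. The paper reaches this directly: after rescaling it sets $\mu=H^n|_K$ with $K=E\cap\overline{\frac12 B}$, uses the upper ADR bound to see that $W_p(\mu)\lesssim 1$ on $\supp\mu$, and applies the Adams--Hedberg dual characterization of capacity to conclude (treating $p>n+1$ separately via the trivial monotonicity bound $\mbox{Cap}_p(E\cap B,2B)\geq\mbox{Cap}_p(\{0\},2B)>0$). You instead pass through Hausdorff content: from the lower ADR bound you extract $H^n_\infty\big(E\cap B(\hat x,\hat r)\big)\gtrsim \hat r^n$ and then invoke the capacity-versus-content inequality from nonlinear potential theory. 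Your route is sound and has the minor virtue of treating all $p\in(1,\infty)$ in a unified way, since your admissibility threshold $n>n+1-p$ is precisely $p>1$ whether or not $p$ exceeds the ambient dimension. The one inefficiency is that the content step is an unnecessary detour in this setting: you invoke Frostman's lemma to recover a measure with polynomial growth from the content estimate, but the ADR surface measure already \emph{is} such a measure, with constants controlled uniformly by the ADR constant --- which is exactly what the paper exploits by taking $\mu=H^n|_K$ outright. So the two proofs agree at the level of the key tool (Wolff potentials) but differ in how they manufacture the test measure, and the paper's choice is the more economical one here.
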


\begin{proof} 
We first observe that since the ADR condition is scale-invariant we may translate and rescale to prove \eqref{eq:eta-CDC} only for $\hat{x}=0$ and $\hat{r}=1$ (we would also need to rescale $E$ but abusing the notation we call it again $E$). Write then $B=B(0,1)$ and observe that, for every $1<p<\infty$, \cite[Example 2.12]{HKM} gives
\begin{equation}
\mbox{Cap}_p(B,2B)=C(n,p).
\label{eq:cap-B-2B}
\end{equation} 
The desired bound from below follows at once if $p>n+1$ from the estimate in \cite[Example 2.12]{HKM}:
$$
\mbox{Cap}_p(E\cap B,2B)
\ge 
\mbox{Cap}_p(\{0\},2B)
=C(n,p)'.
$$

Let us now consider the case $1<p \le n+1$. Write $K=E\cap \overline{\frac12\,B}$.
Combining \cite[Theorem 2.38]{HKM}, \cite[Theorem
 2.2.7]{AH} and \cite[Theorem 4.5.2] {AH} we have that
\begin{equation}\label{cap-dual}
\mbox{Cap}_p(E\cap B,2B)
\gtrsim
\widetilde{\mbox{Cap}}_p(K)
\gtrsim
\sup_{\mu} \left(\frac{\mu(K)}{\|W_p(\mu)\|_{L^{1}(\mu)}^{\frac1{p'}}}\right)^p.
\end{equation}
In the previous expression the implicit constants depend only on $p$ and $n$; $\widetilde{\mbox{Cap}}_p$ stands for the inhomogeneous $p$-capacity, that is,
$$
\widetilde{\mbox{Cap}}_p (K)=\inf\left\{\iint_{\ree} \big(|\phi|^p+|\nabla\phi|^p\big)\, dY:\ \phi\in C_0^\infty(\re),
\ 
\phi\geq 1\mbox{ in }K\right\};
$$
the sup runs over all Radon positive measures supported on $K$; and
$$
W_p(\mu)(y):=\int_0^1 \left(\frac{\mu(B(y,t))}{t^{n+1-p}}\right)^{p'-1}\,\frac{dt}{t},
\qquad
x\in\supp \mu.
$$
We choose $\mu=H^n|_{K}$ and observe that, if $y\in \supp \mu\subset K\subset E$ and $0<t<1$ then 
$\mu(B(y,t))=\sigma(B(y,t)\cap B(0, 1/2)\lesssim t^n$ by ADR. This easily gives 
$
W_p(\mu)(y)
\lesssim 1
$ for every $y\in \supp \mu$ and by ADR
$$
\int_K W_p(\mu)(y)\,d\mu(y)
\le
\mu(K)
\le
\sigma(B)
\lesssim
1.
$$
We can now use \eqref{cap-dual} and ADR again to conclude that
$$
\mbox{Cap}_p(E\cap B ,2B)
\gtrsim 
\mu(K)
\ge
\sigma(B(0,1/2))^p
\gtrsim
1
$$
Combining this with \eqref{eq:cap-B-2B} we readily obtain \eqref{eq:eta-CDC}.
 \end{proof}

 \begin{lemma}\label{ex+uni}
Let $E \subset \ree$, $n\ge 2$, be Ahlfors-David regular of dimension $n$. Let $p$, $1<p<\infty$, be given. Let $x\in E$ and let $0<r<\diam (E)$. Then, given $f\in W^{1,p}(B(x,4r))$ there exists a unique $p$-harmonic function $u\in W^{1,p}(B(x,4r)\setminus E)$ such that $u-f\in W^{1,p}_0(B(x,4r)\setminus E)$. Furthermore, let $u, v\in W^{1,p}_{\rm loc}(B(x,4r)\setminus E)$ be a $p$-superharmonic function and a $p$-subharmonic function  in $\Omega$, respectively. If $\inf\{u-v,0\}\in W^{1,p}_0(B(x,4r)\setminus E)$, then $u\geq v$ a.e in $B(x,4r)\setminus E$. Finally, every point $\hat x\in E\cap B(x,4r)$ is regular for the continuous   Dirichlet problem for  $\nabla \cdot (|\nabla u|^{p-2}\nabla u)=0$.
\end{lemma}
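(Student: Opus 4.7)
The plan is to derive all three claims as essentially direct consequences of the uniform $p$-thickness established in Lemma \ref{Reiflem3}, combined with classical theorems from \cite{HKM}. For the \emph{regularity} claim, I observe that Lemma \ref{Reiflem3} provides a scale invariant lower bound $\eta$ on the capacity ratio at every admissible scale. Consequently, for each $\hat x\in E\cap B(x,4r)$, choosing $r_{\hat x}>0$ small enough so that $B(\hat x,2\rho)\subset B(x,4r)$ for all $\rho\in(0,r_{\hat x})$, we obtain
\[
\int_0^{r_{\hat x}}\!\left[\frac{\mbox{Cap}_p(E\cap B(\hat x,\rho),B(\hat x,2\rho))}{\mbox{Cap}_p(B(\hat x,\rho),B(\hat x,2\rho))}\right]^{\frac1{p-1}}\frac{d\rho}{\rho}\,\geq\,\eta^{\frac1{p-1}}\int_0^{r_{\hat x}}\frac{d\rho}\rho\,=\,\infty,
\]
so the $p$-thickness criterion of Definition \ref{def1.5-+} holds. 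The Wiener-type regularity theorem in \cite[Chapter 6]{HKM} then yields regularity of every $\hat x\in E\cap B(x,4r)$ for the continuous Dirichlet problem for $\nabla\cdot(|\nabla u|^{p-2}\nabla u)=0$.

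For \emph{existence and uniqueness}, set $U:=B(x,4r)\setminus E$ and consider the closed affine subset $\mathcal{A}:=\{v\in W^{1,p}(U):\,v-f\in W^{1,p}_0(U)\}$. The approach is the direct method of the calculus of variations applied to the $p$-Dirichlet energy $J(v):=\iint_U|\nabla v|^p\,dY$ over $\mathcal{A}$: coercivity follows from Poincar\'e's inequality applied to $v-f\in W^{1,p}_0(U)$ (recall $U$ is bounded), weak lower semi-continuity of $J$ on $W^{1,p}(U)$ yields a minimizer $u\in\mathcal{A}$, and strict convexity of $t\mapsto|t|^p$ for $p>1$ gives uniqueness. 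A standard first-variation computation with arbitrary $\Theta\in C_0^\infty(U)$ shows that $u$ satisfies the weak $p$-Laplace equation \eqref{1.1} on $U$, i.e., is $p$-harmonic there; see \cite[Chapter 3]{HKM}.

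For the \emph{comparison principle}, set $w:=\inf\{u-v,0\}$, which lies in $W^{1,p}_0(U)$ by hypothesis. The plan is to test the weak formulations expressing $p$-superharmonicity of $u$ and $p$-subharmonicity of $v$ against $w$ (approximated by compactly supported smooth functions as needed) and subtract. On the set $\{u<v\}$ this yields
\[
\iint_{\{u<v\}}\bigl(|\nabla u|^{p-2}\nabla u-|\nabla v|^{p-2}\nabla v\bigr)\cdot\nabla(u-v)\,dY\,\leq\,0.
\]
The integrand is pointwise nonnegative by the classical monotonicity inequality for the $p$-Laplace vector field, so $\nabla(u-v)\equiv 0$ a.e.\ on $\{u<v\}$; since $w\in W^{1,p}_0(U)$, Poincar\'e's inequality then forces $w\equiv 0$, so $u\geq v$ a.e.\ in $U$.

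The only genuine technical subtlety—and the step where one must be careful—is the admissibility of $w$ as a test function when $u,v$ are only locally in $W^{1,p}(U)$; however, the hypothesis $\inf\{u-v,0\}\in W^{1,p}_0(U)$ is precisely what is needed to bypass this issue (cf.\ \cite[Chapter 3]{HKM}). In summary, all three assertions reduce to standard results of \cite{HKM}, the only input specific to our setting being the uniform $p$-thickness of $E\cap B(x,4r)$ supplied by Lemma \ref{Reiflem3}.
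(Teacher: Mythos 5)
Your proposal is correct and follows essentially the same route as the paper: the regularity claim rests on the uniform $p$-thickness of $E\cap B(x,4r)$ supplied by Lemma~\ref{Reiflem3} together with the Wiener criterion from \cite[Chapter 6]{HKM}, and the existence/uniqueness and comparison statements are the standard variational and monotonicity arguments to which the paper simply refers as ``a standard maximum principle.'' You have merely filled in the routine details that the paper's terse two-sentence proof leaves to the reader.
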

		
\begin{proof}  The first part of  the lemma  is  a  standard  maximum principle. The fact that every point $ \hat x\in E\cap B(x,4r)$ is regular in  the continuous Dirichlet problem for  $\nabla \cdot (|\nabla u|^{p-2}\nabla u)=0$ follows from the fact that Lemma \ref{Reiflem3} implies that $E\cap B(x,4r)$ is uniformly $p$-thick for every $1<p<\infty$ and hence we can invoke \cite[Chapter 6]{HKM}. 
\end{proof}

\begin{lemma} \label{lem2.2} 
Let $\Omega \subset \ree$, $n\ge 2$, be an open set with Ahlfors-David regular of dimension $n$ boundary. Let $p$, $1<p<\infty$, be given. Let $x\in \pom$ and consider $0<r<\diam (\pom)$. Assume also that $u$
is non-negative and
$p$-harmonic  in  $ B (  x, 4r )\cap\Omega $,   continuous on $ B (  x, 4r )\cap\overline\Omega$,
and that $   u= 0$ on $ \pom\cap B(  x, 4r ) $. Then, extending $u$ to be $0$ in $B(x,4r)\setminus\overline{\Omega}$ there holds
	\begin{align}\label{lem2.2:(i)} 
		\left(\frac1{|B ( x, r/2)|}\iint_{B ( x, r/2)} \, | \nabla u |^{ p } \, dy\right)^{\frac1p}  
		\leq  
		\frac{C}{r} \,
\left(\frac1{|B ( x, r)|}\iint_{B ( x,r)}  u^{p-1}\right)^{\frac1{p-1}}.
	\end{align}
	Furthermore, there exists $\alpha\in(0,1)$, depending only on $p,n$ and the ADR constant,
	such that if $Y, Y' \in B ( x, r )$, 
	\begin{align}\label{lem2.2:(ii)} 
		\ | u ( Y ) - u ( Y' ) |   \leq  C \,\biggl( {\frac{ | Y - Y' |}{r} } \biggr)^{\alpha} \, {\max_{ B (
x, 2 r )} } \, u.
	\end{align}
\end{lemma}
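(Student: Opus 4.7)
The plan is to reduce both parts to well-known interior/boundary regularity theory for $p$-harmonic functions via the uniform $p$-thickness of $\partial\Omega$ supplied by Lemma \ref{Reiflem3}. The essential preliminary observation is this: since $\partial\Omega$ is $n$-dimensional ADR, Lemma \ref{Reiflem3} gives that $\partial\Omega\cap B(x,4r)$ is uniformly $p$-thick with a constant $\eta$ depending only on $p$, $n$ and the ADR constant. In particular the extension of $u$ by zero to $B(x,4r)\setminus\overline{\Omega}$ lies in $W^{1,p}(B(x,4r))$ (this is a standard consequence of the vanishing of $u$ on $\pom\cap B(x,4r)$ together with the $p$-capacitary thickness) and this extended $u$ is a non-negative $p$-subsolution on all of $B(x,4r)$.

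For $(\ref{lem2.2:(i)})$, I would first establish a standard Caccioppoli-type inequality. Choose a cutoff $\eta\in C_0^\infty(B(x,3r/4))$ with $\eta\equiv 1$ on $B(x,r/2)$ and $|\nabla\eta|\lesssim 1/r$, and test \eqref{1.1} (or rather its sub-solution version valid for the zero-extended $u$, obtained by truncating $u$ away from $\pom$ via $u_\varepsilon=\max(u-\varepsilon,0)$ and passing to the limit) against $\phi=u\eta^{p}$. Expanding $\nabla(u\eta^p)=\eta^p\nabla u+p\,u\eta^{p-1}\nabla\eta$ and applying Young's inequality to absorb the cross term yields
\begin{equation*}
\frac{1}{|B(x,r/2)|}\iint_{B(x,r/2)}|\nabla u|^{p}\,dy\;\lesssim\;\frac{1}{r^{p}}\cdot\frac{1}{|B(x,3r/4)|}\iint_{B(x,3r/4)}u^{p}\,dy.
\end{equation*}
To convert the $L^{p}$ right-hand side into an $L^{p-1}$ one, I invoke the Moser-type weak maximum principle for non-negative $p$-subsolutions on uniformly $p$-thick domains (see \cite[Chapter 3 and Chapter 6]{HKM} or \cite{S}):
\begin{equation*}
\sup_{B(x,3r/4)}u\;\lesssim\;\left(\frac{1}{|B(x,r)|}\iint_{B(x,r)}u^{p-1}\,dy\right)^{\!1/(p-1)}.
\end{equation*}
Writing $u^{p}=u\cdot u^{p-1}\le (\sup_{B(x,3r/4)}u)\,u^{p-1}$ and combining with the Caccioppoli bound gives $(\ref{lem2.2:(i)})$.

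For $(\ref{lem2.2:(ii)})$, the uniform $p$-thickness of $\partial\Omega\cap B(x,4r)$ places us squarely in the classical setting of boundary H\"older continuity for $p$-harmonic functions vanishing on a uniformly $p$-thick portion of the boundary. I would then quote the standard up-to-the-boundary H\"older estimate (see \cite[Chapter 6, Theorem 6.44]{HKM}, or \cite{S}): for the zero extension of $u$, one has $u\in C^{0,\alpha}(B(x,r))$ with the quantitative estimate $(\ref{lem2.2:(ii)})$, and with $\alpha\in(0,1)$ depending only on $p$, $n$ and the thickness constant $\eta$. Since Lemma \ref{Reiflem3} bounds $\eta$ from below in terms of $p$, $n$, and the ADR constant alone, $\alpha$ has the claimed dependence.

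The steps are all classical; the only genuinely non-routine point is verifying that the ADR assumption really does furnish enough $p$-thickness to run both the global $W^{1,p}$ extension by zero and the Moser/boundary-H\"older machinery. That is handled once and for all by Lemma \ref{Reiflem3}, so the main obstacle reduces to bookkeeping: making sure that the Caccioppoli inequality is applied to the correct nested pair of balls so that the final $L^{p-1}$ average lives on $B(x,r)$ rather than on an inflated ball. A careful two-step choice of radii ($r/2\subset 3r/4\subset r$) as above avoids any such issue.
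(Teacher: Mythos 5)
Your proposal is correct and follows essentially the same route as the paper: for \eqref{lem2.2:(i)} the paper likewise treats the zero extension as a non-negative $p$-subsolution and asserts a Caccioppoli estimate plus a standard sup estimate, both of which you spell out explicitly; for \eqref{lem2.2:(ii)} the paper also invokes the uniform $p$-thickness from Lemma \ref{Reiflem3} to quote the boundary regularity theory of \cite{HKM}. The one small cosmetic difference is that the paper cites HKM for the \emph{oscillation decay} estimate $\max_{B(x,\rho)}u\le C(\rho/r)^\alpha\max_{B(x,r)}u$ centered at the single boundary point $x$ and then says ``triangle inequality and elementary arguments'' yield \eqref{lem2.2:(ii)}, whereas you cite the conclusion directly as a boundary H\"older estimate. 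In either case one must, at least implicitly, apply the boundary oscillation decay at arbitrary points of $\pom\cap B(x,4r)$ (which uniform $p$-thickness permits) and combine it with the interior H\"older estimate of Lemma \ref{lem2.1} to handle arbitrary pairs $Y,Y'\in B(x,r)$; worth keeping in mind that this last step, though routine, is where the claim for all pairs (not just pairs including a boundary point) actually comes from.
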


\begin{proof} Since $u$, extended as above to all of $B(x,4r)$, is a non-negative $p$-subsolution in $B(x,4r)$,
\eqref{lem2.2:(i)}  is just a standard energy or Caccioppoli estimate plus a standard interior estimate.  Thus, we only prove \eqref{lem2.2:(ii)}. Since $E\cap B(x,4r)$ is uniformly $p$-thick as seen in Lemma \ref{Reiflem3},  we can invoke \cite[Theorem 6.38]{HKM} to obtain that
there exist 
$C\ge 1$ and   $\alpha=\alpha\in (0,1)$, depending only on $n$, $p$, and 
the ADR constant, such that
\begin{align}\label{uua}
{ \max_{B (  x,\rho )} } \, u  \leq  C\,\biggl (\frac \rho{r}\biggr )^\alpha {\max_{B (
x,r)} } \, u,\quad\mbox{ whenever $0<\rho\leq r$.}
\end{align}  
This, the triangle inequality and elementary arguments give \eqref{uua}. 
\end{proof}

\begin{lemma} \label{cor2.12} 
Let $\Omega \subset \ree$, $n\ge 2$, be an open set with Ahlfors-David regular of dimension $n$ boundary.  Let $p$, $1<p<\infty$, be given. Let $x\in \pom$ and consider $0<r<\diam (\pom)$.  Assume also that $u$
is non-negative and $p$-harmonic  in  $ B (  x, 4r )\cap\Omega $,   continuous on $ B (  x, 4r )\cap\overline\Omega$,
and that $   u= 0$ on $ \pom\cap B(  x, 4r ) $. Then, extending $u$ to be $0$ in $B(x,4r)\setminus\overline{\Omega}$, there exists $\alpha>0$, such that
\begin{equation}\label{eq2.13}
u(Y) \leq C
\left(\frac{\delta(Y)}{r}\right)^\alpha\,\left( \frac1{|B(x,2r)|}\int\!\!\!\int_{B(x,2r)}
u^{p-1}(Z)\, dZ\right)^{\frac1{p-1}},
\end{equation}
for all $Y\in B(x,r)$, where the constants $C$ and $\alpha$ depend
only on $n,p$, and the ADR constant of $\pom$.
\end{lemma}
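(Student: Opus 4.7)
\textbf{Proof proposal for Lemma \ref{cor2.12}.}

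The plan is to combine the boundary Hölder decay furnished by \eqref{uua} in Lemma \ref{lem2.2} with a standard Moser-type sup bound for non-negative $p$-subsolutions. Fix $Y\in B(x,r)$ and choose $\hat y\in\pom$ with $|Y-\hat y|=\delta(Y)$. Since $x\in\pom$, we have $\delta(Y)\le|Y-x|<r$, and in particular $\hat y\in \pom\cap\overline{B(x,r)}$, so $B(\hat y,r)\subset B(x,2r)$ and $B(\hat y,2r)\subset B(x,4r)$. The case $\delta(Y)\ge r/4$ is essentially immediate: the zero-extension of $u$ is a non-negative $p$-subsolution in $B(x,4r)$ (as discussed in the proof of Lemma \ref{lem2.2}), and Moser's sup estimate applied on $B(Y,r/2)\subset B(x,2r)$ yields $u(Y)$ controlled by the $L^{p-1}$ average of $u$ over $B(x,2r)$. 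So we may assume $\delta(Y)<r/4$.

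Next, I would apply \eqref{uua} from Lemma \ref{lem2.2}, centered at the boundary point $\hat y$, to the extended subsolution. Uniform $p$-thickness of $E\cap B(\hat y,2r)$ is guaranteed by Lemma \ref{Reiflem3}, so the boundary Hölder estimate of \cite[Theorem 6.38]{HKM} (used in the proof of Lemma \ref{lem2.2}) gives
\begin{equation*}
\max_{B(\hat y,\rho)}u \,\le\, C\Bigl(\frac{\rho}{r/2}\Bigr)^{\alpha}\max_{B(\hat y,r/2)}u,\qquad 0<\rho\le r/2,
\end{equation*}
for the same exponent $\alpha\in(0,1)$ as in \eqref{lem2.2:(ii)}. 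Taking $\rho=\delta(Y)<r/4$ and using $Y\in B(\hat y,\delta(Y))$ produces
\begin{equation*}
u(Y)\,\le\,\max_{B(\hat y,\delta(Y))}u \,\lesssim\,\Bigl(\frac{\delta(Y)}{r}\Bigr)^{\alpha}\max_{B(\hat y,r/2)}u.
\end{equation*}

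Finally, I would control $\max_{B(\hat y,r/2)}u$ by the integral average via the Moser sup bound for non-negative $p$-subsolutions (see \cite{S} or \cite[Chapter 3]{HKM}); this estimate,
\begin{equation*}
\max_{B(\hat y,r/2)}u \,\le\, C\left(\frac{1}{|B(\hat y,r)|}\iint_{B(\hat y,r)} u^{p-1}(Z)\,dZ\right)^{\frac{1}{p-1}},
\end{equation*}
is a standard consequence of the Caccioppoli estimate \eqref{lem2.2:(i)} together with Sobolev embedding and iteration. Since $B(\hat y,r)\subset B(x,2r)$ and $|B(\hat y,r)|\approx|B(x,2r)|$, the right-hand side is comparable to the desired average over $B(x,2r)$. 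Concatenating the two displayed inequalities yields \eqref{eq2.13}.

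The only genuine point of care is to cite the correct Moser sup bound with the exponent $p-1$ on the right-hand side; this is not the standard $L^p$ Caccioppoli form but the slightly stronger $L^s$ sup bound (valid for any $s>0$) obtained through Moser iteration, which is classical for non-negative $p$-subsolutions under the uniform $p$-thickness available here. Beyond that, the argument is a bookkeeping exercise with radii, making sure that all the intermediate balls centered at $\hat y$ stay inside $B(x,4r)$ so that Lemma \ref{lem2.2} applies.
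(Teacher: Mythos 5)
Your argument follows the paper's own route, which reads simply ``Follows from Lemma \ref{lem2.2} and standard estimates for $p$-subsolutions'': boundary H\"older decay \eqref{uua} for the zero-extended non-negative $p$-subsolution, combined with a Moser-type sup bound. One small arithmetic slip in your bookkeeping: for $Y\in B(x,r)$ with nearest boundary point $\hat y$, the triangle inequality only gives $|\hat y-x|\le\delta(Y)+|Y-x|<2r$ (and $<5r/4$ in your main case $\delta(Y)<r/4$), so $\hat y$ need not lie in $\overline{B(x,r)}$ and $B(\hat y,r)$ need not be contained in $B(x,2r)$; shrinking the intermediate radii (for example, running the boundary H\"older estimate and Moser sup bound over $B(\hat y,r/2)$ and $B(\hat y,2r/3)$, both of which do sit inside $B(x,2r)$) fixes this without changing the argument or the exponent.
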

\begin{proof} Follows from Lemma \ref{lem2.2} and standard estimates for $p$-subsolutions. Let us note that in the linear case (i.e, $p=2$) one can give an alternative proof based on Bourgain's Lemma \ref{Bourgainhm} and an iteration argument (see \cite{HMT} for details). \end{proof}

\begin{lemma} \label{lem2.4} 
Let $\Omega \subset \ree$, $n\ge 2$, be an open set with Ahlfors-David regular of dimension $n$ boundary.  Let $p$, $1<p<\infty$, be given. Let $x\in \pom$ and consider $0<r<\diam (\pom)$.  Assume also that $u$
is non-negative and $p$-harmonic  in  $ B (  x, 4r )\cap\Omega $,   continuous on $ B (  x, 4r )\cap\overline\Omega$,
$   u= 0$ on $ \pom\cap B(  x, 4r ) $, and assume that $u$ is extended to  be $0$ in $B(x,4r)\setminus\overline{\Omega}$.  Then $u$ has a representative in 
$ W^{1,p} ( B(x,4r)) $ with H\"{o}lder
continuous partial derivatives in $B(x,4r)\setminus \pom$. Furthermore, 
there exists $ \beta\in (0,1] $, 
such that if $ Y, Y' \in B (X, \hat r/2 ) $, with $B ( X,
4\hat r )\subset  B (x,4r)\setminus \pom$, then
\begin{align}\label{lem2.4:(i)}
 |\nabla u ( Y ) - \nabla u ( Y' ) |  
\lesssim 
\left( \frac{| Y - Y' |}{\hat r}\right)^{ \,\beta}\, 
	\max_{B (X ,
\hat r )} \, | \nabla u | \lesssim  \frac1{\hat r} \, \left( \frac{|Y - Y' |}{\hat r} \right)^{\, \beta}\, \max_{B (X, 2\hat r
)} u\,, 
\end{align}
where $ \beta$ and the implicit constants depend only on $p$ and $n$.
Furthermore, if 
\begin{equation}\label{eq2.51}
\frac{ u (  Y ) }{ \delta(Y))}  \approx  |\nabla u  ( Y )|\,,\qquad   
Y  \in B (X, 3\hat r)\,,
\end{equation}
then $ u $ has continuous second derivatives in
$B (X, 3\hat r)$, and
 there exists $ C \geq 1, $ depending only on $n$, $p$ and the implicit constants in  \eqref{eq2.51},
 such that
\begin{align}\label{lem2.4:(ii)}
 \max_{  B (  X,  \frac{ \hat r  }{2 }\, ) \,   }  \,  \,
|\nabla^2   u |
  \leq  C  \,  \left(   \frac1{|B ( X,  \hat r  )|}\,
\iint_{B ( X,  \hat r  )}  \,   
|\nabla ^2u(Y) | ^2  \, dY \right)^{\frac12}  \leq   C^2
   \,   \frac{u ( X  )}{\delta(X)^2}. 
	\end{align}
   \end{lemma}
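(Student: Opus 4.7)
The plan is to decompose the statement into three pieces---(a) the $W^{1,p}(B(x,4r))$ regularity together with the qualitative H\"older continuity of $\nabla u$ in $B(x,4r)\setminus\pom$; (b) the scale-invariant $C^{1,\beta}$ estimate \eqref{lem2.4:(i)}; and (c) the second-derivative bound \eqref{lem2.4:(ii)} under the non-degeneracy hypothesis \eqref{eq2.51}---and dispatch each one by combining the classical interior regularity theory for the $p$-Laplace equation with the Caccioppoli-type estimates already available from Lemmas \ref{lem2.1}--\ref{lem2.2}.

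Part (a) follows from the standard fact that the zero-extension of a continuous, non-negative $p$-subsolution vanishing on $\pom$ remains in $W^{1,p}_{\rm loc}(B(x,4r))$: the required interior Caccioppoli bound is built into \eqref{lem2.2:(i)}, and the extension across a continuously attained zero boundary trace preserves Sobolev regularity (cf.~\cite[Chapter~3]{HKM}). For (b), note that every ball $B(X,4\hat r)\subset B(x,4r)\setminus\pom$ lies either entirely in $\Omega$, where $u$ is $p$-harmonic, or entirely in the exterior, where $u\equiv 0$ and the claim is trivial. In the former case, $\nabla u$ is locally H\"older continuous with a universal exponent $\beta=\beta(n,p)\in(0,1]$ by the classical interior $C^{1,\beta}$ theory for the $p$-Laplacian (Uhlenbeck, Evans, Uraltseva, DiBenedetto, Tolksdorf, Lewis); see \cite{S}. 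The first inequality in \eqref{lem2.4:(i)} is the scale-invariant formulation of that estimate, and the second inequality follows by combining it with the sup bound $\max_{B(X,\hat r)}|\nabla u|\lesssim \hat r^{-1}\max_{B(X,2\hat r)}u$, which is a routine consequence of \eqref{lem2.1:(i)} plus a Moser iteration on $|\nabla u|$.

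For (c), the hypothesis \eqref{eq2.51} forces $u>0$ on $B(X,3\hat r)\subset\Omega$, so Harnack's inequality \eqref{lem2.1:(ii)} yields $u(Y)\approx u(X)$ on $B(X,2\hat r)$, and hence $|\nabla u(Y)|\approx u(X)/\delta(X)$ uniformly on $B(X,2\hat r)$ (note that $\delta(Y)\approx\delta(X)$ since $\delta(X)\ge 4\hat r$). Consequently, linearizing $\dv(|\nabla u|^{p-2}\nabla u)=0$ around $\nabla u$ produces the matrix
\[
A_{ij}(Y)=|\nabla u(Y)|^{p-2}\delta_{ij}+(p-2)|\nabla u(Y)|^{p-4}\partial_i u(Y)\,\partial_j u(Y),
\]
whose eigenvalues lie between two positive multiples of $(u(X)/\delta(X))^{p-2}$, uniformly in $Y\in B(X,3\hat r)$. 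Differentiating \eqref{1.1} in $x_k$ shows that each $w_k=\partial_k u$ is a weak solution of a \emph{linear}, uniformly elliptic, divergence-form equation with bounded measurable coefficients, and classical De Giorgi--Nash--Moser regularity---concretely, the Caccioppoli inequality for $w_k$ followed by Moser's $L^\infty$ estimate---gives both the $L^2$-to-$L^\infty$ reverse-H\"older step and the pointwise bound in \eqref{lem2.4:(ii)}, with the final factor $u(X)/\delta(X)^2$ arising from the already-established gradient bound $|\nabla u|\lesssim u(X)/\delta(X)$.

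The main obstacle I anticipate is simply bookkeeping the constants in (c): one must promote the pointwise non-degeneracy in \eqref{eq2.51}, via Harnack for $u$, into a uniform two-sided bound $|\nabla u|\approx u(X)/\delta(X)$ on the whole ball $B(X,3\hat r)$, so that the linearized coefficient matrix $A(\nabla u)$ is genuinely uniformly elliptic on $B(X,3\hat r)$ rather than merely pointwise. Once this propagation is in place, part (c) reduces entirely to classical linear elliptic regularity applied successively on $B(X,\hat r)$ and on $B(X,\hat r/2)$, and the listed inequalities fall out with constants depending only on $n$, $p$, and the comparability constants in \eqref{eq2.51}.
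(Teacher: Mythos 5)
Your decomposition into (a)--(c) is sensible and parts (a) and (b) are essentially correct (the paper simply cites \cite{T} for the interior $C^{1,\beta}$ theory behind \eqref{lem2.4:(i)}; note that \cite{S} is the Serrin reference for Harnack and H\"older continuity of $u$, not for gradient H\"older continuity, but that is a reference slip rather than a mathematical error). The problem lies in part (c).

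You correctly identify the linearized coefficient matrix $A_{ij}$ and correctly argue, via Harnack and \eqref{eq2.51}, that $A$ is uniformly elliptic on $B(X,3\hat r)$. But you then invoke ``classical De Giorgi--Nash--Moser regularity'' for the divergence-form equation $\dv(A\nabla w_k)=0$ with ``bounded measurable coefficients'' and claim this delivers both the $L^2$-to-$L^\infty$ estimate for $\nabla^2 u$ and the continuity of $\nabla^2 u$. That step does not work: DGNM applied to $w_k=\partial_k u$ with merely bounded measurable $A$ yields H\"older continuity of $w_k$ and $\|w_k\|_{L^\infty}\lesssim\|w_k\|_{L^2}$, i.e., regularity of $\nabla u$, which you already have from \eqref{lem2.4:(i)}; it says nothing about $\nabla w_k=\nabla^2 u$. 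The Caccioppoli inequality for $w_k$ does give the rightmost (energy) bound $\big(\fint_{B(X,\hat r)}|\nabla^2 u|^2\big)^{1/2}\lesssim u(X)/\delta(X)^2$, but the leftmost inequality in \eqref{lem2.4:(ii)} (sup of $\nabla^2 u$ controlled by its $L^2$ average) and the assertion that $u\in C^2(B(X,3\hat r))$ require more than bounded measurable coefficients. The missing ingredient is precisely what \eqref{lem2.4:(i)} together with \eqref{eq2.51} buys you: under the non-degeneracy, $A$ is a smooth function of $\nabla u$ away from $\{\nabla u=0\}$, and since $\nabla u\in C^\beta$, the coefficients $A$ are H\"older continuous. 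One must therefore invoke Schauder-type interior estimates (as the paper does, citing \cite{GT} and \cite[Lemma 2.4(d)]{LV-2006}), not DGNM, to conclude $w_k\in C^{1,\alpha}_{\rm loc}$ and to obtain the scale-invariant sup bound on $\nabla^2 u$. Replacing Schauder by DGNM is the concrete gap: DGNM discards exactly the coefficient regularity that makes second-derivative control possible.
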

	
\begin{proof}    
For  \eqref{lem2.4:(i)} we refer, for example,  to \cite{T}.
\eqref{lem2.4:(ii)} is a consequence of \eqref{lem2.4:(i)}, \eqref{eq2.51}  and Schauder type
estimates, see \cite{GT}.  For a more detailed proof of \eqref{lem2.4:(ii)}, see \cite[Lemma 2.4 (d)]{LV-2006} for example.
\end{proof}

\begin{remark} We note that the second inequality in \eqref{lem2.4:(i)} and 
\eqref{lem2.1:(ii)} give 
\begin{equation}\label{grad_est}
|\nabla u(Y)| \lesssim \frac{u(Y)}{\delta(Y)}\,,\qquad  Y\in B(x,2r)\setminus \pom.
\end{equation}
\end{remark}

\begin{lemma} \label{l2.10p} 
Let $\om\subset \ree$, $n\ge 2$, be an
open set and assume that $\pom$ is Ahlfors-David regular of dimension $n$. Let $p$, $1<p<\infty$, be given. Let $x \in
	 \pom$, $0<r<\diam (\pom)$, and suppose that $ u $ is  non-negative and  $p$-harmonic  in
$ B (  x, 4r )\cap\Omega$,  vanishing continuously on $  B(  x, 4r )\cap \Omega$ (hence $u$ is continuous in $B(x,4r)$ after being extended by $0$ in $B(x,4r)\setminus\overline{\Omega}$).  There exists a unique finite positive Borel measure $ \mu$ on $ \mathbb
R^{n+1}$, with support in $ \pom \cap B(x,4r)$, such that
	\begin{align}\label{rmeasure}
		-\dint_{\mathbb {R}^{n+1}} |\nabla u|^{p-2} \nabla u\cdot\nabla\phi \, dY \, = \, \int \phi \, d
\mu
	\end{align}
whenever $\phi \in C_0^\infty ( B(x,4r) )$. Furthermore, there exists $ C<\infty$, depending only on $p,n$ and the ADR constant, such that
	\begin{align}\label{eq2.53}
		 \left(\frac{\max_{B(x,r)} u}{r}\right)^{p-1}  \leq C\,\,\frac{\mu \big(\Delta( x, 2r )\big)}
{\sigma \big(\Delta( x, 2r )\big)}.
	\end{align}
\end{lemma}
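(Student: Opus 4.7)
First I would construct $\mu$. The zero-extension of $u$ to $B(x,4r)\setminus\overline{\Omega}$ is nonnegative, continuous on $B(x,4r)$, $p$-harmonic off $\partial\Omega$, and in fact $p$-subharmonic on all of $B(x,4r)$: for any ball $B\subset B(x,4r)$ and any $p$-harmonic $v\ge u$ on $\partial B$, the comparison principle of Lemma \ref{ex+uni} applied in $B\cap\Omega$ and in $B\setminus\overline\Omega$ separately yields $v\ge u$ in $B$. Consequently the distribution $\phi\mapsto -\iint|\nabla u|^{p-2}\nabla u\cdot\nabla\phi\,dY$ on $C_0^\infty(B(x,4r))$ is nonnegative on nonnegative $\phi$, and by Riesz representation it equals $\int\phi\,d\mu$ for a unique positive Radon measure $\mu$. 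Since $u$ is $p$-harmonic away from $\partial\Omega$, the identity \eqref{rmeasure} forces $\supp\mu\subset\partial\Omega\cap B(x,4r)$; a full treatment is in \cite[Chapter 21]{HKM}.

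For the bound \eqref{eq2.53}, set $M:=\max_{B(x,r)} u$. My plan is a two-step reduction. First, Lemma \ref{cor2.12} applied at scale $r$ yields
\[
M^{p-1}\ \lesssim\ \frac{1}{|B(x,2r)|}\iint_{B(x,2r)} u^{p-1}\,dY,
\]
so it suffices to prove the Hardy-type inequality
\[
\iint_{B(x,2r)} u^{p-1}\,dY\ \lesssim\ r^{p}\,\mu\bigl(\Delta(x,2r)\bigr).
\]
Indeed, combining the two displays with $|B(x,2r)|\approx r^{n+1}$ and $\sigma(\Delta(x,2r))\approx r^{n}$ gives $(M/r)^{p-1}\lesssim\mu(\Delta(x,2r))/\sigma(\Delta(x,2r))$, which is \eqref{eq2.53}.

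To establish the Hardy-type inequality I would test \eqref{rmeasure} against $\phi=u\eta^{p}$, with $\eta\in C_0^\infty(B(x,3r))$ a cutoff satisfying $\eta\equiv 1$ on $B(x,2r)$ and $|\nabla\eta|\lesssim r^{-1}$; after integration by parts and an application of Young's inequality one obtains a $p$-Caccioppoli bound on $\iint|\nabla u|^{p}\eta^{p}$ together with control of $\int u\,\eta^p\,d\mu$. Using the uniform $p$-thickness of $\partial\Omega$ from Lemma \ref{Reiflem3}, one then applies a Hardy-type inequality for $u$ vanishing on $\partial\Omega$, and closes the estimate via a Wolff-potential comparison as in \cite[Chapter 21]{HKM} or the $p$-capacity arguments of \cite{LV-2006,LV-2007}. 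The principal obstacle is this last Hardy step when $p\neq 2$: in the linear case one may shortcut via the explicit Green function (Lemmas \ref{l2.10} and \ref{lemma:G-aver}), but for general $p$ one must exploit the continuous vanishing of $u$ on $\partial\Omega$ and the uniform $p$-thickness simultaneously, so that the correct $(p-1)$-power and the factor $r^{p}$ emerge in tandem.
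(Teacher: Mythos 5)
Your construction of $\mu$ and the first reduction are sound: the zero-extension of $u$ is indeed a $p$-subsolution, the nonnegative Riesz distribution is represented by a measure with support on $\pom\cap B(x,4r)$, and Lemma \ref{cor2.12} (with $\delta(Y)\leq|Y-x|<r$ for $Y\in B(x,r)$, since $x\in\pom$) does yield $M^{p-1}\lesssim \frac{1}{|B(x,2r)|}\iint_{B(x,2r)}u^{p-1}\,dY$, so the target reduces to $\iint_{B(x,2r)}u^{p-1}\,dY\lesssim r^{p}\mu(\Delta(x,2r))$, and the exponent bookkeeping checks out.

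The gap is in the way you propose to prove that Hardy-type inequality. Testing \eqref{rmeasure} against $\phi=u\eta^{p}$ cannot give any information about $\mu$: since $\supp\mu\subset\pom\cap B(x,4r)$ and $u$ vanishes identically there (it is the continuous zero-extension), the right-hand side $\int u\,\eta^{p}\,d\mu$ is \emph{zero}, not a quantity to be ``controlled.'' What remains after Young's inequality is a pure Caccioppoli estimate $\iint|\nabla u|^{p}\eta^{p}\lesssim r^{-p}\iint_{B(x,3r)}u^{p}$, with $\mu$ nowhere in sight. The subsequent appeal to a Hardy inequality $\iint u^{p}/\delta^{p}\lesssim\iint|\nabla u|^{p}$ likewise contains no $\mu$; combined with the Caccioppoli bound it is simply scale-invariantly circular. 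So your chain of estimates never produces the required \emph{lower} bound on $\mu(\Delta(x,2r))$, which is the whole content of \eqref{eq2.53}.

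That lower bound is the genuinely nontrivial point, and it is where the paper's (one-line) proof does its work: it invokes \cite[Lemma 3.1]{KZ} (together with the uniform $p$-thickness of Lemma \ref{Reiflem3} and the boundary H\"older estimate of Lemma \ref{lem2.2}), which is a Wolff-potential / capacity estimate for the Riesz measure of an $A$-subharmonic function that vanishes continuously on a uniformly $p$-thick set. Informally, one works with $M-u$, a nonnegative $p$-supersolution on $B(x,r)$ with the same Riesz measure $\mu$, and applies the Kilpel\"ainen--Mal\'y pointwise Wolff-potential estimate at the boundary point $x$ where $M-u$ attains the value $M$; the $p$-thickness (CDC) is what prevents the Wolff potential from being supported on arbitrarily small scales and lets one extract $\mu(\Delta(x,2r))\gtrsim r^{n}(M/r)^{p-1}$. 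Your plan names Wolff potentials as a closing step, but the preceding Caccioppoli machinery does not feed into it; you would need to replace the $u\eta^{p}$ test-function step with the actual potential-theoretic comparison, which is the substance of \cite[Lemma 3.1]{KZ}.
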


Note that \eqref{eq2.53} is the $p$-harmonic analogue of Lemma \ref{l2.10}. 

\begin{proof} For the proof of \eqref{rmeasure}, see \cite[Chapter 21]{HKM}. Using Lemma \ref{Reiflem3} and Lemma \ref{lem2.2}, \eqref{eq2.53} follows directly from \cite[Lemma 3.1]{KZ}, see also \cite{EL}.
\end{proof}

The following lemma generalizes Lemma \ref{lemma:G-aver} to the case $1<p<\infty$.

\begin{lemma}\label{cor2.54} 
Let $\om\subset \ree$, $n\ge 2$, be an
open set and assume that $\pom$ is Ahlfors-David regular of dimension $n$. Let $p$, $1<p<\infty$, be given. Let $x \in
	 \pom$, $0<r<\diam (\pom)$, and suppose that $ u $ and $\mu$ are as in Lemma \ref{l2.10p}. Then there exist  constants $C$ and $M_1$, depending only on $n$ and the ADR constant, such that if $B(y,M_1s)\subset B(x,2r)$ with $y\in\pom$, then
$$  \max_{B(y,s/2)} u^{p-1} \lesssim  
\frac1{|B(y,s)|}\int\!\!\!\int_{B(y,s)}
u^{p-1}(Z)\, dZ 
  \leq  C \,  s^{p-1} \,\,\frac{\mu \big(\Delta( y,M_1s )\big)}
{\sigma \big(\Delta( y, s) \big)}\,. $$
Moreover, for
all $\gamma\in (0,1]$,
$$
\frac1{|B(y,s)|}\int\!\!\!\int_{B(y,s) \cap\{Y:\, \delta(Y) \leq \gamma s\}}
u^{p-1}(Z)\, dZ 
  \leq  C \, \gamma^p s^{p-1} \,\,\frac{\mu \big(\Delta( y,M_1s )\big)}
{\sigma \big(\Delta( y, s) \big)}\,. $$
\end{lemma}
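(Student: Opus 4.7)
The plan is to mirror the proof of Lemma \ref{lemma:G-aver}, with $u^{p-1}$ playing the role of the Green function $G(\cdot,X)$ and with the $p$-harmonic CFMS-type bound \eqref{eq2.53} from Lemma \ref{l2.10p} replacing the corresponding linear bound in Lemma \ref{l2.10}.

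For the left-most inequality I would first observe that, after extending $u$ by $0$ outside $\overline{\Omega}$, the resulting function is a nonnegative $p$-subsolution in $B(x,4r)$ (the extension is harmless because $u$ vanishes continuously on $\pom\cap B(x,4r)$ while $\pom$ is uniformly $p$-thick by Lemma \ref{Reiflem3}). Moser's local boundedness theorem for nonnegative $p$-subsolutions (see \cite{HKM}) then yields
$$
\sup_{B(y,s/2)} u\,\lesssim\,\left(\fint_{B(y,s)} u^{p-1}\,dZ\right)^{1/(p-1)},
$$
and raising to the $(p-1)$-th power gives the desired bound.

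For the remaining inequality I plan a Whitney-cube decomposition. Fix $\gamma\in(0,1]$ and set $\Sigma_\gamma:=\{I\in\W:\, I\cap B(y,s)\neq\emptyset,\,\dist(I,\pom)<\gamma s\}$; by \eqref{eqWh1} each such $I$ satisfies $\ell(I)\lesssim \gamma s$ and $I\subset B(y,2s)$. As in Lemma \ref{lemma:G-aver}, I associate to each such $I$ a dyadic cube $Q_I\in\dd$ with $\ell(Q_I)=\ell(I)$ and $\dist(I,Q_I)=\dist(I,\pom)$. Harnack's inequality \eqref{lem2.1:(ii)} together with \eqref{eq2.53} applied at $x_{Q_I}$ on scale $\ell(Q_I)$ should give
$$
u(Y)^{p-1}\lesssim \ell(I)^{p-1}\,\frac{\mu(\kappa Q_I)}{\sigma(Q_I)},\qquad Y\in I,
$$
for a uniform dilation constant $\kappa$. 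Integrating over $I$ (using $|I|\approx \ell(I)^{n+1}$ and $\sigma(Q_I)\approx \ell(Q_I)^{n}$), grouping by generation $\ell(I)=2^{-k}$, and invoking the uniform bounded overlap of $\{\kappa Q_I\}_{\ell(I)=2^{-k}}$ together with their common containment in $\Delta(y,M_1 s)$ (for $M_1$ large enough depending on $\kappa$), I expect
$$
\iint_{B(y,s)\cap\{\delta<\gamma s\}}\! u^{p-1}\,dZ\,\lesssim\, \sum_{k:\,2^{-k}\lesssim \gamma s}\! 2^{-kp}\,\mu\big(\Delta(y,M_1 s)\big)\,\lesssim\,(\gamma s)^p\,\mu\big(\Delta(y,M_1 s)\big).
$$
Dividing by $|B(y,s)|\approx s^{n+1}$ and invoking ADR ($\sigma(\Delta(y,s))\approx s^n$) produces the factor $\gamma^p\,s^{p-1}\,\mu(\Delta(y,M_1 s))/\sigma(\Delta(y,s))$; specializing to $\gamma=1$ yields the middle inequality.

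No step here is genuinely hard; the whole argument is a direct $p$-adaptation of the harmonic proof. The points requiring the most care are the choice of the dilation $\kappa$ (via Harnack, so that \eqref{eq2.53} applies uniformly across generations), and the justification of Moser iteration on the zero-extended $u$, which rests on the uniform $p$-thickness of $\pom$ furnished by Lemma \ref{Reiflem3}.
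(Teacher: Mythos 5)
Your proposal is correct and follows essentially the same route the paper takes: the authors simply note that "Using \eqref{eq2.53} the proof of Lemma \ref{cor2.54} is the same {\it mutatis mutandi} as that of Lemma \ref{lemma:G-aver}," and what you wrote is exactly that adaptation spelled out — the Whitney decomposition into $\Sigma_\gamma$, the association $I\mapsto Q_I$, the interior Harnack bound combined with the $p$-harmonic CFMS estimate \eqref{eq2.53} to control $u^{p-1}$ on each $I$, and the dyadic summation using bounded overlap of the $\kappa Q_I$ at each generation. The only small deviation is at the left-most inequality, where the harmonic case uses the sub-mean-value property of the zero-extended Green function, while you (correctly) invoke local boundedness for nonnegative $p$-subsolutions at exponent $q=p-1$; this is the right substitute and is indeed how one fills in the "mutatis mutandis."
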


We note that in the previous estimates it is implicitly understood that $u$ is extended to be $0$ on $B(x,4r)\setminus\overline{\Omega}$

\begin{proof}  
Using \eqref{eq2.53} the proof of Lemma \ref{cor2.54} is the same {\it mutatis mutandi} as that of  Lemma \ref{lemma:G-aver}. We omit further details.
\end{proof}

\section{Proof of Theorem \ref{t1} and Theorem \ref{t3}: preliminary arguments}\label{s4}

We start the proofs of Theorem \ref{t1} and Theorem \ref{t3} by 
giving some preliminary arguments. 
We first show that \eqref{eqn:main-SI} implies \eqref{eq1.4}.  To this end, 
we claim that, without loss of generality, 
we may suppose that for a surface ball $\Delta=\Delta(x,r)$, 
the point
$X_\Delta$ in the statement of Theorem \ref{t1} satisfies \eqref{eq2.Bourgain1}, i.e.,
there is some $c_1 = c_1(n,ADR) >0$ such that
\begin{equation}\label{eq4.1a}
\hm^{X_\Delta}(\Delta)\,\geq\, c_1.
\end{equation}  
The only price to be paid is that the constants $c_0, C_0$ may now be slightly different (depending only on $n$ and ADR),
and that \eqref{eqn:main-SI} will now hold with $\Delta$ in place of 
$2\Delta$, i.e., for the (possibly) new point $X_\Delta$, we shall have
\begin{equation}\label{eqn:main-SI-2}
\int_ {\Delta} 
k^{X_\Delta}(y)^q\,d\sigma(y)
\le
C_0\,   \sigma(\Delta)^{1-q} \,.
\end{equation}
 Indeed, set $\Delta':=\Delta(x,r/2)$, and let $X':= X_{\Delta'}\in B(x,r/2)\cap\Omega$ be the point such that
\eqref{eqn:main-SI} holds for $\Delta'$. 
Fix $\hat{x} \in\pom$ such that $\delta(X') =|X'-\hat{x}|$. Suppose first that $\delta(X')\le r/4$ in which case 
$\Delta(\hat{x},r/4)\subset \Delta$.  Thus, if  in addition
$\delta(X') < cr/4$, where $c\in (0,1)$ is the constant in Lemma \ref{Bourgainhm}, then we set $X_\Delta:=X'$,
and  \eqref{eq4.1a} holds by 
Lemma \ref{Bourgainhm}.   On the other hand, if $cr/4\le \delta(X_\Delta)\le r/4 $, we 
select $X_\Delta$ along the line
segment joining $X'$ to $\hat{x}$, such that $\delta(X_\Delta)=|X_\Delta-\hat{x}| = cr/8$, and
\eqref{eq4.1a} holds exactly as before.  Moreover, \eqref{eqn:main-SI-2} holds for this new
$X_\Delta$, in the first case, immediately by \eqref{eqn:main-SI} applied to $X'= X_{\Delta'}$,
and in the second case, by moving from $X'$ to $X_\Delta$ via Harnack's inequality
(which may be used within the touching ball $B(X',\delta(X'))$.) Let us finally consider the case $\delta(X')> r/4$. Then we can use Harnack within the ball $B(X', r/4)$, to pass to a point $X''$, on the line segment 
joining $X'$ to $x$ such that $|X'-X''|=r/8$, and consequently $\delta(X'')\le |X''-x|< 3r/8$ (since $X'\in B(x,r/2)$). Hence \eqref{eqn:main-SI} holds (with different constant) for $\Delta'$ with $X''$ in place of $X_{\Delta'}$. Take now $\hat{x} \in\pom$ such that $\delta(X'') =|X''-\hat{x}|$ and note that $\Delta(\hat{x},r/4)\subset \Delta$. We can now repeat the previous argument with $X''$ in place of $X'$. Details are left to the interested reader.

 Similarly, if  \eqref{eq1.4} holds for $\Delta=\Delta(x,r)$, with $X_{\Delta}
\in B(x,r/2) \cap\Omega$,
then again without loss of generality we may suppose that \eqref{eq4.1a} holds,
for possibly a new $X_\Delta\in B(x,r)\cap \Omega$.  Indeed if we
let $X' \in B(x,r/2)\cap\Omega$ be the original point $X_\Delta$ for which \eqref{eq1.4} holds, we
may then follow the argument in the previous paragraph,  {\it mutatis mutandi}.   
We choose $\hat{x}\in \pom$ such that $\delta(X')=|X'-\hat{x}|$ and suppose first that
$\delta(X')\le r/4$ so that $\Delta(\hat{x},r/4) \subset \Delta$. Considering the same two cases as before
we pick $X_\Delta$ and in either case \eqref{eq4.1a} holds by Lemma \ref{Bourgainhm} applied to the surface ball $\Delta(\hat{x},r/4)$. Note that in the second case \eqref{eq1.4} continues to hold for  $X_\Delta$, with a
different but still uniform $\beta$, by the use of Harnack's
inequality within the touching ball $B(X',\delta(X'))$, to move from $X'$ to $X_\Delta$. When 
$r/4< \delta(X')$ we choose $X''$ as before, and by Harnack's inequality, \eqref{eq1.4} holds with $X''$ in place of $X'$, for a different but still uniform $\beta$. Again, if we let $\hat{x}\in \pom$, with $\delta(X'') =|X''-\hat{x}|$ then $\Delta(\hat{x},r/4) \subset \Delta$, and we may now repeat the previous argument with $X''$ in place of $X'$.

We are now ready to show that  \eqref{eqn:main-SI} implies \eqref{eq1.4}.
\begin{lemma}\label{l3.4}  Let $\Omega\subset \ree$ be an open set with $n$-dimensional $ADR$ boundary, and
let $\Delta = \Delta(x,r)$ be a surface ball on $\pom$. 
Let $\mu$ be a measure on $\pom$, such that $\mu|_{\Delta}\ll\sigma$,  
and that for some $q>1$,
and for some $\A<\infty$, that 
\begin{equation}\label{eq3.5aa}
\fint_{\Delta} k^q\, 
d\sigma \,\leq\, \A\,,
\end{equation}
where $k:= d\mu/d\sigma$ on $\Delta$.
Suppose also that
\begin{equation}\label{eq4.3}
\frac{\mu(\Delta)}{\sigma(\Delta)}\geq 1.
\end{equation}
Then there are constants $\eta, \beta \in (0,1)$, depending only on $n, \,q$, $\A$ and ADR,
such that
for any Borel set $A\subset \Delta$
\begin{equation}\label{eq3.6aa}
\sigma(A) \geq (1-\eta)\,\sigma(\Delta)\,\,\,\implies\,\,\, \mu(A) \,\geq \beta\, \mu(\Delta).
\end{equation}
\end{lemma}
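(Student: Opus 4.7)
The plan is to exploit the higher integrability \eqref{eq3.5aa} through Hölder's inequality on the ``bad'' set $\Delta\setminus A$, which has small $\sigma$-measure by hypothesis, and then convert the resulting smallness into a fraction of $\mu(\Delta)$ using the lower bound \eqref{eq4.3}.

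Concretely, set $B:=\Delta\setminus A$, so that $\sigma(B)\le \eta\,\sigma(\Delta)$. Since $\mu|_\Delta\ll\sigma$ with density $k$, Hölder's inequality with exponents $q$ and $q'$, followed by \eqref{eq3.5aa}, gives
\begin{equation*}
\mu(B)\,=\,\int_B k\,d\sigma\,\le\,\left(\int_\Delta k^q\,d\sigma\right)^{1/q}\sigma(B)^{1/q'}
\,\le\,\Lambda^{1/q}\,\sigma(\Delta)^{1/q}\,\bigl(\eta\,\sigma(\Delta)\bigr)^{1/q'}
\,=\,\Lambda^{1/q}\,\eta^{1/q'}\,\sigma(\Delta).
\end{equation*}
The non-degeneracy hypothesis \eqref{eq4.3} then yields $\sigma(\Delta)\le\mu(\Delta)$, hence
\begin{equation*}
\mu(B)\,\le\,\Lambda^{1/q}\,\eta^{1/q'}\,\mu(\Delta).
\end{equation*}

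Choosing $\eta$ small enough that $\Lambda^{1/q}\eta^{1/q'}\le 1/2$ (for instance $\eta:=(2\Lambda^{1/q})^{-q'}$, which depends only on $q$ and $\Lambda$) and setting $\beta:=1/2$, we conclude
\begin{equation*}
\mu(A)\,=\,\mu(\Delta)-\mu(B)\,\ge\,(1-\tfrac12)\,\mu(\Delta)\,=\,\beta\,\mu(\Delta),
\end{equation*}
which is \eqref{eq3.6aa}.

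There is essentially no obstacle here: the argument is a one-line application of Hölder plus the normalization \eqref{eq4.3}; ADR enters only implicitly through the ambient framework (it is not needed for this particular implication). The role of this lemma in the larger scheme is to show that condition $(\star)$ (resp.\ \eqref{eq1.9}) of the main theorems upgrades to the weaker-looking ``local non-degeneracy'' condition $(\star\star)$ (resp.\ \eqref{eq1.4p}), the point being that by our preliminary reduction we may assume $X_\Delta$ is chosen so that $\omega^{X_\Delta}(\Delta)\gtrsim 1$, i.e.\ the normalization \eqref{eq4.3} holds (up to a harmless multiplicative constant) for $\mu=\omega^{X_\Delta}$.
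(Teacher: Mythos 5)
Your proof is correct and coincides essentially verbatim with the paper's own argument: the complementary set $F=\Delta\setminus A$, H\"older's inequality against $k^q$ over $\Delta$ using \eqref{eq3.5aa}, the conversion $\sigma(\Delta)\le\mu(\Delta)$ from \eqref{eq4.3}, and the choice of $\eta$ small so that $\A^{1/q}\eta^{1/q'}\le 1/2$ with $\beta=1/2$. No discrepancy to report.
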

\begin{remark}\label{r4.10}  Let $k$ be a normalized version
of harmonic measure:
$k =c_1^{-1} \sigma(\Delta)\, k^{X_\Delta}$, with $X_\Delta$ a point for which
\eqref{eq4.1a} and \eqref{eqn:main-SI-2} hold.  Then clearly \eqref{eq3.5aa} and \eqref{eq4.3}
hold for $k$, 
and the conclusion \eqref{eq3.6aa} is just a reformulation of \eqref{eq1.4}.
We note that in the sequel, we shall actually use only \eqref{eq3.6aa}/\eqref{eq1.4},
rather than 
condition \eqref{eq3.5aa}/\eqref{eqn:main-SI-2}. 
Thus, Theorem \ref{t1} could just as well have been stated with
condition $(\star\star)$ (see Remark \ref{r4.7}) in place of $(\star)$.
\end{remark}

\begin{proof}[Proof of Lemma \ref{l3.4}]
Set $F:= \Delta\setminus A$, so $\sigma(F)\leq \eta\,\sigma(\Delta)$. 
Then \begin{multline*}
\mu(F)=\int_F k\, d\sigma \leq \sigma(F)^{1/q'}\left(\int_{\Delta}k^qd\sigma\right)^{1/q}\\[4pt]
\leq\, \A^{1/q}\, \sigma(F)^{1/q'}\,\sigma(\Delta)^{1/q} 
\leq \,\A^{1/q}\,\eta^{1/q'}\sigma(\Delta)
\leq \,\A^{1/q}\,\eta^{1/q'}\mu(\Delta)\,,
\end{multline*}
where in the last step we have used \eqref{eq4.3}.
 Thus, 
$$\mu(A) \geq \left(1-\A^{1/q}\,\eta^{1/q'}\right)\mu(\Delta)\geq \,\frac12 \,\mu(\Delta)\,,$$
for $\eta$ small enough.  This completes the proof. 
\end{proof}

Fix $Q_0\in\dd(\pom)$. As in \eqref{cube-ball2}, we set
$B_{Q_0}=B(x_{Q_0}, r_{0})$, with $r_0:=r_{Q_0}\approx\ell(Q_0)$, so
that  $\Delta_{Q_0}=B_{Q_0} \cap  \pom\subset Q_0$.

Proceeding first in the setting of Theorem \ref{t1},
let $X_0:= X_{\Delta_{Q_0}}$ be the the point relative to 
$\Delta=\Delta_{Q_0}$ such that \eqref{eq4.1a} and \eqref{eqn:main-SI-2} hold.
Note that \eqref{eq4.1a} trivially implies that
$$\hm^{X_0}(Q_0) \geq c_1\,.$$
With the pole $X_0$ fixed, we
define the normalized harmonic measure and the normalized Green's function, respectively, by
\begin{equation}\label{normalize}
\mu := \, \frac{1}{c_1}\,\sigma(Q_0)\, \hm^{X_{0}}\,,\qquad
u(Y):= \,\ \frac{1}{c_1}\,\sigma(Q_0)\,G(X_0,Y)\,.
\end{equation}
Then  under this normalization, 
setting $\|\,\mu\,\|=\mu(\pom)$, we have 
\begin{equation}\label{eq4.6}
1 \leq 
\frac{\mu(Q_0)}{\sigma(Q_0)}
\leq \frac{\|\,\mu\,\|}{\sigma(Q_0)} \leq C_1 \,,
\end{equation}
with $C_1 = 1/c_1$.
Furthermore, we may apply Lemma \ref{l3.4} (using \eqref{eq4.1a} and with $\Lambda \approx C_0 /c_1$) 
to obtain \eqref{eq3.6aa} for $\mu$,
with $\Delta=\Delta_{Q_0}$.  In turn, the latter bound, in conjunction with
\eqref{eq4.1a} and ADR, 
clearly implies an analogous estimate for $Q_0$, namely that 
there are constants that we again call $\eta,\beta \in (0,1)$ such that
 for any Borel set $A\subset Q_0$,
\begin{equation}\label{eq4.10}
\sigma(A) \geq (1-\eta)\,\sigma(Q_0)\,\,\,\implies\,\,\, \mu(A) \,\geq \beta\, \mu(Q_0)\,.
\end{equation}
Here, of course, we may have different values of the parameters 
$\eta$ and $\beta$, but these have the same dependence as the original values,
so for convenience we maintain the same notation.


In the $p$-harmonic case, proceeding under the setup of Theorem \ref{t3}, we let $u$, $\mu$ be the $p$-harmonic function
and its associated $p$-harmonic measure, corresponding to the point $x=x_{Q_0}$ and 
the radius $r =Cr_{0}:=Cr_{Q_0}$, 
satisfying the hypotheses of Theorem \ref{t3}, where we choose the constant
$C$ depending only on $n$ and ADR, such that $Q_0\subset \Delta(x_{Q_0},Cr_0)$
(thus in particular, $\mu$ is defined on $Q_0$).  Since we assume that
$u$ is non-trivial and non-negative, we can apply Lemma \ref{l2.10p} in $B(x_{Q_0},Cr_{0})$ and 
use \eqref{eq1.10} to conclude that $\mu(\Delta_{Q_0})>0$. We can therefore 
normalize $u$ and $\mu$ (abusing the notation we call the normalizations $u$ and $\mu$)
so that $\mu(\Delta_{Q_0})/ \sigma(Q_0) = 1$, and 
since  $\Delta_{Q_0}\subset Q_0\subset \Delta(x_{Q_0},Cr_0)$
by \eqref{eq1.10}, we also have $
\mu(\Delta(x_{Q_0},Cr_0))/\sigma(\Delta(x_{Q_0},Cr_0))
\approx \mu(Q_0)/\sigma(Q_0) \approx 1$.
Set  $k:= d\mu/d\sigma$. 
As above, by \eqref{eq1.9} and \eqref{eq1.10}, we may then use Lemma \ref{l3.4} to see that 
again $\mu$ satisfies \eqref{eq4.6}, now with $\|\,\mu\,\|:= \mu (\Delta(x_{Q_0},Cr_0))$,
and \eqref{eq4.10}.  The constants
$C_1, \eta$ and $\beta$ depend on $C$, $n$, the ADR constant,  
$C_0$, and $q$.

\begin{remark}\label{conv} Under the assumptions of Theorems \ref{t1} and \ref{t3} and throughout this section and Section \ref{s6}, for  $Q_0\in \dd( E)$ fixed, $u$, $\mu$ will continue to denote
the normalized Green function and harmonic measure or the normalized non-negative $p$-harmonic solution and $p$-harmonic Riesz measure, as defined above. 
In particular,  
\eqref{eq4.6} and \eqref{eq4.10} hold for all $1<p<\infty$.
\end{remark}

As above, let $\M$ denote the usual Hardy-Littlewood maximal operator on $\pom$ and recall the definition of $\dd_{\F,Q_0}$ in \eqref{eq:def-sawt}.

\begin{lemma}\label{l4.4} 
Let $Q_0\in\dd$, and suppose that $\mu$
satisfies \eqref{eq4.6} and \eqref{eq4.10}.
Then there is a pairwise disjoint family $\F=\{Q_j\}_{j\,\geq 1}\subset\dd_{Q_0}$, such that
\begin{equation}\label{eq4.8}
\sigma\left(Q_0\setminus \left(\cup_j Q_j\right)\right) \geq \frac1C\,\sigma(Q_0)\,
\end{equation}
and
\begin{equation}\label{eq4.9}
\frac\beta2\, \leq\, \frac{\mu(Q)}{\sigma(Q)} 
\,\leq \,\left( \fint_Q \left( \M\, \mu\right)^{1/2}\,d\sigma\right)^2\,
\le\, C\,,\qquad \forall\, Q\in\dd_{\F,Q_0}\,,
\end{equation}
where  $C>1$ depends only on $\eta$, $\beta$, $C_1$, $n$ and ADR.
\end{lemma}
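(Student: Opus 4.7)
The plan is a stopping-time argument on the dyadic subtree $\dd_{Q_0}$ with two stopping rules. Let $K\geq 1$ be a large threshold to be chosen at the end. Let $\F_1$ be the family of \emph{maximal} $Q\in\dd_{Q_0}$ with $\mu(Q)/\sigma(Q)<\beta/2$, and $\F_2$ the family of maximal $Q\in\dd_{Q_0}$ with $\big(\fint_Q(\M\mu)^{1/2}d\sigma\big)^2>K$. Set $\F:=\F_1\cup\F_2$; by maximality, the cubes in $\F$ are pairwise disjoint. For any $Q\in\dd_{\F,Q_0}$ neither stopping condition was triggered, which gives the leftmost inequality (with constant $\beta/2$) and the rightmost inequality (with constant $K$) in \eqref{eq4.9}. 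The middle inequality is automatic: by ADR and \eqref{cube-ball}, for every $x\in Q$ one has $Q\subset\Delta(x,C'\ell(Q))$ with $\sigma(\Delta(x,C'\ell(Q)))\approx\sigma(Q)$, hence $\M\mu(x)\gtrsim\mu(Q)/\sigma(Q)$ uniformly on $Q$; taking square roots, averaging over $Q$, and squaring yields $\big(\fint_Q(\M\mu)^{1/2}d\sigma\big)^2\gtrsim\mu(Q)/\sigma(Q)$, with the implicit constant absorbed by adjusting the thresholds.

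The real work is the packing \eqref{eq4.8}. For $\F_2$, each selected cube satisfies $\int_Q(\M\mu)^{1/2}d\sigma>K^{1/2}\sigma(Q)$, so by disjointness
\[
\sigma\Big(\bigcup_{Q\in\F_2}Q\Big)\,<\,K^{-1/2}\int_{Q_0}(\M\mu)^{1/2}\,d\sigma.
\]
Since $(\pom,\sigma)$ is a space of homogeneous type, $\M$ is of weak type $(1,1)$, so Kolmogorov's inequality with exponent $1/2<1$ gives
\[
\int_{Q_0}(\M\mu)^{1/2}\,d\sigma\,\leq\,C\,\sigma(Q_0)^{1/2}\,\|\mu\|^{1/2}\,\leq\,C\,C_1^{1/2}\,\sigma(Q_0),
\]
where the last step uses \eqref{eq4.6}. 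Consequently $\sigma(\bigcup_{\F_2}Q)\leq C\,C_1^{1/2}K^{-1/2}\sigma(Q_0)$, which can be made smaller than $\eta\sigma(Q_0)/2$ by choosing $K$ large enough depending on $C_1$, $\eta$, $n$, and the ADR constant.

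For $\F_1$, summing the stopping inequalities over the disjoint family and invoking \eqref{eq4.6},
\[
\mu\Big(\bigcup_{Q\in\F_1}Q\Big)\,\leq\,\tfrac{\beta}{2}\,\sigma\Big(\bigcup_{Q\in\F_1}Q\Big)\,\leq\,\tfrac{\beta}{2}\,\sigma(Q_0)\,\leq\,\tfrac{\beta}{2}\,\mu(Q_0).
\]
If we had $\sigma(\bigcup_{\F_1}Q)\geq(1-\eta)\sigma(Q_0)$, then the non-degeneracy hypothesis \eqref{eq4.10} applied to $A:=\bigcup_{\F_1}Q$ would force $\mu(A)\geq\beta\mu(Q_0)>(\beta/2)\mu(Q_0)$, a contradiction. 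Hence $\sigma(\bigcup_{\F_1}Q)<(1-\eta)\sigma(Q_0)$, and combining this with the bound for $\F_2$ one gets $\sigma\big(Q_0\setminus\bigcup_\F Q\big)\geq\eta\sigma(Q_0)-(\eta/2)\sigma(Q_0)=(\eta/2)\sigma(Q_0)$, which is \eqref{eq4.8} with $C=2/\eta$. The principal technical point is the Kolmogorov step: because $\M\mu$ need not be locally integrable, one cannot use a plain $L^1$ Chebyshev estimate on the maximal-function stopping condition, and the exponent $1/2$ in the definition of $\F_2$ is precisely what allows weak type $(1,1)$ of $\M$ to close the argument.
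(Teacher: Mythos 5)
Your proof is correct and follows the same essential strategy as the paper: a two-condition dyadic stopping-time using the Kolmogorov/weak-$(1,1)$ bound $\fint_{Q_0}(\M\mu)^{1/2}\,d\sigma\lesssim 1$ for the maximal-function cut-off and the non-degeneracy \eqref{eq4.10} to control the $\mu$-undershoot cut-off. One small slip: $\F_1\cup\F_2$ need not be pairwise disjoint as you claim (a $\F_1$-cube can sit inside an $\F_2$-cube or vice versa); to match the statement you should take $\F$ to be the \emph{maximal} cubes of $\F_1\cup\F_2$, equivalently the maximal cubes satisfying at least one stopping rule, which changes neither $\bigcup_{\F}Q_j$ nor $\dd_{\F,Q_0}$. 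That fix aside, the endgame you run is organized differently from the paper's and is arguably slightly cleaner. The paper decomposes $\mu(Q_0)$ over the type-I cubes, the type-II union $F$, and the residual set $A=Q_0\setminus\bigcup Q_j$, applies \eqref{eq4.10} to $Q_0\setminus F$ via complementation, and then uses Lebesgue differentiation on $A$ (to get $\M\mu\leq K^2$ a.e.\ on $A$, hence $\mu(A)\leq K^2\sigma(A)$) to close the count. You instead bound $\sigma\big(\bigcup_{\F_1}Q\big)<(1-\eta)\sigma(Q_0)$ directly, by feeding $\bigcup_{\F_1}Q$ itself into \eqref{eq4.10} and contradicting the summed stopping bound $\mu\big(\bigcup_{\F_1}Q\big)<\tfrac{\beta}{2}\mu(Q_0)$, then finish with a union bound against the $\F_2$-estimate. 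Your route avoids the differentiation step entirely and keeps all the reasoning at the level of measures of unions of stopping cubes; it buys a marginally more elementary conclusion at the price of tracking the two stopping families separately rather than through the complementary set $A$.
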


\begin{proof}
The proof is based on a stopping time argument similar to those used in the proof of the
Kato square root conjecture \cite{HMc},\cite{HLMc}, \cite{AHLMcT}, and in local $Tb$ theorems.  We begin by noting that
\begin{equation}\label{eq4.15a}
\|\M\,\mu \|_{L^{1,\infty}(\sigma)}\,:=\,\sup_{\lambda>0} \lambda \,
\sigma\{ \M\,\mu >\lambda\} \,\lesssim\, \|\,\mu\,\|\,\lesssim\, \sigma(Q_0)\,,
\end{equation}
by the Hardy-Littlewood Theorem and \eqref{eq4.6}.
Consequently, by Kolmogorov's criterion,
\begin{equation}\label{eq4.16a}
\fint_{Q_0} \left( \M\, \mu\right)^{1/2}\,d\sigma \leq C= C(n,ADR, C_1)\,.
\end{equation}
We now perform a stopping time argument to extract a family $\F=\{Q_j\}$
of dyadic sub-cubes of $Q_0$ that are maximal with respect to the property that
either 
\begin{equation}\label{eq4.12}
\frac{\mu(Q_j)}{\sigma(Q_j)}\,< \,\frac\beta2\,,
\end{equation}
and/or
\begin{equation}\label{eq4.13}
\fint_{Q_j}\left( \M\, \mu\right)^{1/2}\,d\sigma\,>\, K\,,
\end{equation}
where $K\ge 1$ is a sufficiently large number to be chosen momentarily.
Note that $Q_0\notin \F$, 
by \eqref{eq4.6} and \eqref{eq4.16a}.
We shall say that
$Q_j$ is of ``type I" if \eqref{eq4.12} holds, and $Q_j$ is of ``type II"
if \eqref{eq4.13} holds but \eqref{eq4.12} does not.  Set $A:= Q_0\setminus (\cup_j Q_j)$,
and $F:= \cup_{Q_j  {\rm \,type\, II}}\, Q_j$.
Then by \eqref{eq4.6},
\begin{equation}\label{eq4.13a}
\sigma(Q_0)\leq\mu(Q_0) = \, \sum_{Q_j  {\rm \,type\, I}} \mu(Q_j)
\,+\,\mu(F)\,+\,\mu(A)\,.
\end{equation}
By definition of the type I cubes,
\begin{equation}\label{eq4.14}
\sum_{Q_j  {\rm \,type\, I}} \mu(Q_j)\,\leq \,\frac\beta2\sum_j\sigma(Q_j) \,\leq\, \frac\beta2\, 
\sigma(Q_0)\,.
\end{equation}
To handle the remaining terms, observe that
\begin{multline}\label{eq4.16}
\sigma(F) = \sum_{Q_j  {\rm \,type\, II}}\, \sigma(Q_j)\,
\le
\frac1{K}\, \sum_j \int_{Q_j} \left(\M\,\mu\right)^{1/2}\,d\sigma\\
\leq \frac1{K}\,  \int_{Q_0} \left(\M\,\mu\right)^{1/2}\,d\sigma\,
\leq\, \eta\, \sigma(Q_0)\,,
\end{multline}
by the definition of the type II cubes, \eqref{eq4.16a}, and the choice of 
$K=C\,\eta^{-1}$. 
By \eqref{eq4.10} and complementation,
we therefore find that
\begin{equation}\label{eq4.17}
\mu(F) \,\leq \,(1-\beta)\,\mu(Q_0)\,.
\end{equation}

Next, if $x\in A$, then every $Q\in \dd_{Q_0}$ that contains $x$, must satisfy
the opposite inequality to \eqref{eq4.13}, and therefore, by Lebesgue's differentiation
theorem,
$$\M\,\mu (x) \leq K^2\,,\qquad \mbox{for \,\, a.e.}\,\, x\in A\,.$$
Thus $\mu|_A\ll\sigma$, with $d\mu|_A/d\sigma \leq K^2$, and thus,
$$\mu(A)\, \leq K^2 \sigma(A)\,.$$
Combining the latter estimate with \eqref{eq4.13a}, \eqref{eq4.14}, and \eqref{eq4.17},
we obtain
$$\beta\, \mu (Q_0) \,\leq  \,\frac\beta2\,\sigma(Q_0) \,+\, K^2 \sigma(A)\,.$$
Using \eqref{eq4.6}, we then find that
$$\beta\,\sigma(Q_0) \,\leq\, \beta\, \mu (Q_0) \,\leq 
 \,\frac\beta2\,\sigma(Q_0)\,+\, K^2 \sigma(A)\,.$$
The conclusion of the lemma now follows readily.
\end{proof}

For future reference, let us note an easy consequence of 
the last inequality in \eqref{eq4.9} and the ADR property:  for all $Q\in \dd_{\F,Q_0}$, 
and for any constant $b>1$,
we have
\begin{equation}\label{eq4.9*}
\mu\left(\Delta\big(x_Q, \,b \diam(Q)\big)\right) \lesssim b^n \sigma(Q) 
\left( \fint_Q \left( \M\, \mu\right)^{1/2}\,d\sigma\right)^2 \lesssim b^n \sigma(Q)\,.
\end{equation}

We recall that the ball $B_Q^*$ and surface ball $\Delta^*_Q$ are defined in \eqref{eq2.bstar}.

\begin{lemma}\label{l4.1} Let $u$, $\mu$, 
be as in Remark \ref{conv}.
If the constant $K_0$ in \eqref{eq2.bstar} and
\eqref{eq2.1} is chosen sufficiently large, then
for each $Q\in\dd_{\F, Q_0}$ with $\ell(Q)\le K_0^{-1}\,\ell(Q_0) $,
there exists $Y_{Q}\in U_Q$ with $\delta(Y_Q)\le |Y_Q-x_Q|\lesssim \ell(Q)$, where the implicit constant is independent of $K_0$, such that
\begin{equation}\label{eq4.2}
\frac{\mu(Q)}{\sigma(Q)} \leq  C\,|\nabla u(Y_{Q})|^{p-1},
\end{equation}
where $C$ depends on $K_0$ and the implicit constants in the hypotheses of Theorems \ref{t1} and \ref{t3}.
\end{lemma}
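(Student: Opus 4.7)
My plan is to prove the bound by a controlled integration-by-parts argument against a judiciously chosen test function, and then pass from an averaged gradient estimate to a pointwise one by the pigeonhole principle.

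Fix an absolute constant $c\geq 1$ (depending only on ADR) such that $Q\subset B(x_Q,c\,\ell(Q))$, and let $\phi\in C_0^\infty\big(B(x_Q,2c\,\ell(Q))\big)$ satisfy $\phi\equiv 1$ on $B(x_Q,c\,\ell(Q))$ with $|\nabla\phi|\lesssim \ell(Q)^{-1}$. Taking $K_0$ large enough (depending only on $c$, $n$, ADR and $c_0$), the pole $X_0$ lies outside $\supp\phi$: indeed $x_Q\in\pom$ and $\delta(X_0)\gtrsim \ell(Q_0)\geq K_0\ell(Q)\gg c\,\ell(Q)$. Hence in the harmonic case \eqref{eq2.14} applied with normalization \eqref{normalize} gives $\int \phi\,d\mu = -\iint \nabla u\cdot\nabla\phi\,dY$, while in the $p$-harmonic case \eqref{rmeasure} gives $\int \phi\,d\mu = -\iint |\nabla u|^{p-2}\nabla u\cdot\nabla\phi\,dY$. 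In either setting, using $\phi\equiv 1$ on $Q$,
\begin{equation*}
\mu(Q)\;\leq\;\int\phi\,d\mu\;\leq\;\iint_{\supp\nabla\phi}|\nabla u|^{p-1}|\nabla\phi|\,dY.
\end{equation*}

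The plan now is to split $\supp\nabla\phi$, which lies in the annulus $A=B(x_Q,2c\,\ell(Q))\setminus B(x_Q,c\,\ell(Q))$, into a ``good'' piece $A_1:=A\cap\{\delta(Y)\geq\gamma\ell(Q)\}$ and a ``bad'' piece $A_2:=A\cap\{\delta(Y)<\gamma\ell(Q)\}$, for a small parameter $\gamma>0$ to be chosen. Every $Y\in A_1$ belongs to some Whitney cube $I$ with $\ell(I)\approx \delta(Y)\in[\gamma\ell(Q)/C,\,C\ell(Q)]$ and $\dist(I,Q)\leq C\ell(Q)$, so for $K_0$ sufficiently large depending on $\gamma$ we have $I\in\W_Q$, i.e. $A_1\subset U_Q$. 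On this piece we bound crudely
\begin{equation*}
\iint_{A_1}|\nabla u|^{p-1}|\nabla\phi|\,dY\;\lesssim\;\ell(Q)^{n}\,\sup_{Y\in U_Q\cap A_1}|\nabla u(Y)|^{p-1}\;\approx\;\sigma(Q)\,\sup_{Y\in U_Q\cap A_1}|\nabla u(Y)|^{p-1}.
\end{equation*}

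The main obstacle is the boundary layer piece $A_2$, which we must show is absorbable into $\mu(Q)/2$. Here I plan to combine the pointwise gradient estimate \eqref{grad_est}, giving $|\nabla u(Y)|^{p-1}\lesssim (u(Y)/\delta(Y))^{p-1}$ on Whitney cubes, with the boundary-layer $L^{p-1}$ estimate of Lemma \ref{cor2.54} (or Lemma \ref{lemma:G-aver} in the linear case): for each dyadic scale $s_k=2^{-k}\gamma\ell(Q)$, the layer $\{\delta\approx s_k\}\cap B(x_Q,2c\ell(Q))$ is covered by Whitney cubes of size $\approx s_k$, and decomposing these accordingly one obtains
\begin{equation*}
\iint_{A_2}|\nabla u|^{p-1}\,dY\;\lesssim\;\sum_{k\geq 0} s_k^{-(p-1)}\!\!\int_{\{\delta\lesssim s_k\}\cap B(x_Q,2c\ell(Q))}\!\!u^{p-1}\,dY\;\lesssim\;\sum_{k\geq 0}s_k^{-(p-1)}\cdot s_k^{\,p}\ell(Q)^{n+1}/\ell(Q)\;\lesssim\;\gamma\,\ell(Q)^{n+1},
\end{equation*}
where in the middle step I use Lemma \ref{cor2.54} together with $\mu(\Delta(x_Q,M_1 s_k))/\sigma(\Delta(x_Q,s_k))\lesssim 1$ from \eqref{eq4.9*}; the geometric sum converges because the $\gamma^p$ gain beats the $s_k^{-(p-1)}$ loss. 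Multiplying by $\|\nabla\phi\|_\infty\lesssim\ell(Q)^{-1}$ yields $\iint_{A_2}|\nabla u|^{p-1}|\nabla\phi|\lesssim \gamma\,\sigma(Q)\leq (2\gamma/\beta)\,\mu(Q)$ by the lower bound in \eqref{eq4.9}. Choosing $\gamma$ so small that this is $\leq\mu(Q)/2$, and then $K_0=K_0(\gamma)$ large enough to ensure $A_1\subset U_Q$, the good piece alone dominates $\mu(Q)/2$, so there exists $Y_Q\in U_Q\cap A_1$ with $|\nabla u(Y_Q)|^{p-1}\gtrsim \mu(Q)/\sigma(Q)$. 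Since $Y_Q\in B(x_Q,2c\ell(Q))$ and $c$ is an absolute constant, we have $|Y_Q-x_Q|\lesssim \ell(Q)$ with an implicit constant independent of $K_0$, completing the proof.
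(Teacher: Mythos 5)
Your proof is correct and takes essentially the same route as the paper: integrate the Riesz-measure (resp.\ Green's function) identity against a cutoff at scale $\ell(Q)$, observe that the pole $X_0$ lies outside the support for $K_0$ large, split the resulting integral into a piece inside $U_Q$ and a thin boundary layer, use Lemma \ref{cor2.54} (together with \eqref{eq4.9*} and the lower bound in \eqref{eq4.9}) to absorb the boundary layer into $\frac12\mu(Q)$, and conclude by pigeonhole. The only differences from the paper's argument are cosmetic: (i) the paper takes the boundary layer to be $(\hat B_Q\cap\Omega)\setminus U_Q$, whose width is already $\approx K_0^{-1}\ell(Q)$, so $K_0^{-1}$ itself serves as the small parameter and there is no need to introduce a separate $\gamma$; (ii) the paper converts $|\nabla u|$ into $u$ via the boundary Caccioppoli estimate on balls centered at $\pom$, whereas you invoke the interior pointwise bound \eqref{grad_est} on each Whitney shell and then sum the geometric series yourself — which is, in effect, re-running the Whitney decomposition already packaged inside the proof of Lemma \ref{cor2.54}. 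Both are valid; the paper's phrasing is slightly more economical since it applies Lemma \ref{cor2.54} once rather than scale by scale.
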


\begin{remark}\label{remark:X0}
Recalling the construction at the beginning of
Section \ref{s4}, and the fact that we have defined $X_0:= X_{\Delta_{Q_0}}$,
we see that 
$\ell(Q_0)\approx \delta(X_0)\ge K_0^{\,-1/2}\,\ell(Q_0)$, for $K_0$ chosen large enough.
We note further that the point $Y_Q$ whose existence is guaranteed by Lemma \ref{l4.1},
is essentially a Corkscrew point
relative to $Q$.  Indeed, $\delta(Y_Q)\gtrsim K_0^{-1}\ell(Q)$ (since $Y\in U_Q$),
and also $|Y_Q-x_Q|\lesssim \ell(Q)$ (with constant independent of $K_0$).
With a slight abuse of terminology, we shall refer to
$Y_Q$ as a Corkscrew point relative to $Q$, 
with corkscrew constant depending on $K_0$.
\end{remark}

\begin{proof}[Proof of Lemma \ref{l4.1}]
Fix $Q\in\dd_{\F, Q_0}$, with $\ell(Q)\le K_0^{-1}\,\ell(Q_0) $, where, as in
Remark \ref{remark:X0}, we have chosen $K_0$ large enough that
$ \ell(Q_0)\approx \delta(X_0)\ge K_0^{\,-1/2}\,\ell(Q_0)$.
Recall \eqref{cube-ball}, \eqref{cube-ball2} 
and set $\hat{B}_Q=B(x_Q, \hat{r}_Q)$, 
$\hat{\Delta}_Q=\hat{B}_Q\cap \pom$, with $\hat{r}_{Q}\approx \ell(Q)$ and 
$Q\subset\hat{\Delta}_Q$. Let $0\le \phi_Q\in C_0^\infty(2\hat{B}_Q)$, 
such that $\phi_Q\equiv 1$ in $\hat{B}_Q$ and $\|\nabla \phi_Q\|\lesssim \ell(Q)^{-1}$. 
Note that 
$$
K_0^{1/2}\,\ell(Q)
\le
K_0^{-1/2}\,\ell(Q_0)
\le \delta(X_0)\le |X_0-x_Q|\,,
$$
which implies that $X_0\notin 4\hat{B}_Q$ provided $K_0$ is large enough.
Thus, by \eqref{eq2.14} in the linear case, or \eqref{rmeasure}  in general,
\begin{align}\label{eqn:estw}
\ell(Q)\,\mu(Q)
&\le
\ell(Q)\,\int_{\pom} \phi_Q\,d\mu \lesssim \iint_{\hat{B}_Q\cap\Omega} |\nabla u(Y)|^{p-1}\,dY\\
&\le
\iint_{\hat{B}_Q\cap U_Q} |\nabla u(Y)|^{p-1}\,dY \,+\,
\iint_{\left(\hat{B}_Q\cap \Omega \right)\setminus U_Q} |\nabla u(Y)|^{p-1}\,dY\nonumber\\
&=:
\mathcal{I}+\mathcal{II}.\nonumber
\end{align}
Notice that by construction
$(\hat{B}_Q\cap \Omega )\setminus U_Q\subset \{Y\in \hat{B}_Q: \delta(Y)\leq C K_0^{-1}\,\ell(Q) \}$.
We may therefore cover the latter region by a family of ball $\{B_k\}_k$, centered on $\pom$, of radius
$CK_0^{-1} \ell(Q)$, such that their doubles $\{2B_k\}$ have bounded overlaps, and satisfy
$$
\bigcup_k 2B_k\subset  \{Y\in 2\hat{B}_Q: \delta(Y)\leq 2C K_0^{-1}\,\ell(Q) \}=: \Sigma(K_0).
$$

By the boundary Cacciopoli estimate in Lemma \ref{lem2.2}, plus H\"older's inequality, we obtain
\begin{align*}
\mathcal{II}&\leq
\sum_k \iint_{B_k} |\nabla u(Y)|^{p-1} dY\,
 \lesssim  \left(\frac{K_0}{\ell(Q)}\right)^{p-1}\,  \sum_k\iint_{2B_k}|u(Y)|^{p-1}dY\\
&\lesssim
  \left(\frac{K_0}{\ell(Q)}\right)^{p-1}  \iint_{\Sigma(K_0)} |u(Y)|^{p-1}dY\\
&\lesssim
   \left(\frac{K_0}{\ell(Q)}\right)^{p-1}  K_0^{-p} \ell(Q)^{p} \,
\mu\big(\Delta(x_Q,2M_1\hat{r}_Q)\big)\\
&\lesssim
K_0^{-1} \ell(Q)\, \sigma(Q)   \leq  \frac 12\, \ell(Q)\, \mu(Q)\,,
\end{align*}
where in the last three steps we have used, \eqref{eqn:aver-B2} (when $p=2$) or
Lemma \ref{cor2.54} ($1<p<\infty$),  
\eqref{eq4.9*}, and finally the choice of $K_0$ large enough.
We can then hide this term on the
left hand side of \eqref{eqn:estw}, so that
\begin{align*}
\ell(Q)\,\mu(Q)\,
&\lesssim \mathcal{I} \,
=
\iint_{\hat{B}_Q\cap U_Q} |\nabla u(Y)|^{p-1}\,dY
=
\sum_i \iint_{\hat{B}_Q\cap U_Q^i} |\nabla u(Y)|^{p-1}\,dY
\\
&\lesssim
\ell(Q)^{n+1}\max_i \sup_{Y\in \hat{B}_Q\cap U_Q^{i}}|\nabla u(Y)|^{p-1}\\
&\approx
\ell(Q)\,\sigma(Q)\max_i \sup_{Y\in \hat{B}_Q\cap U_Q^{i}}|\nabla u(Y)|^{p-1},
\end{align*}
and we recall that $\{U_Q^i\}_i$ is an enumeration of the connected components
of $U_Q$, and that the number of these components is uniformly bounded.
Thus, for some $i$, there is a point $Y_Q\in \hat{B}_Q\cap U_Q^{i}$,
such that  $\mu(Q)/\sigma(Q)\lesssim |\nabla u(Y_Q)|^{p-1}$. To complete the proof we simply observe that $\delta(Y_Q)\le |Y_Q-x_Q|\le \hat{r}_Q\lesssim\ell(Q)$, by construction.
\end{proof}

\section{Proof of Theorem \ref{t1}, Corollary \ref{c1} and Theorem \ref{t3}}\label{s5}

In this section we complete the proofs of Theorem \ref{t1} and Theorem \ref{t3} by proving that $E:=\pom$ satisfies WHSA, and hence, by Proposition \ref{prop2.20}, $E$ is UR.  The proof of Corollary \ref{c1} follows almost immediately from Theorem \ref{t1} and we supply the proof at the end of the section. 
Our approach to the proofs of Theorems \ref{t1} and \ref{t3}
is a refinement/extension of the arguments in \cite{LV-2007}, who, as mentioned in the introduction, treated
the special case that $k\approx 1$.

We fix $Q_0\in\dd(E)$, and we let $u$ and $\mu$ 
be as in Remark \ref{conv}. We recall that by \eqref{eq4.6},
\begin{equation}\label{eq6.1}
\frac{\mu(Q_0)}{\sigma(Q_0)} \approx 1\,.
\end{equation}
Let $\F=\{Q_j\}_j$ be the family of
maximal stopping time cubes constructed in Lemma \ref{l4.4}.
Combining \eqref{eq4.2} and \eqref{eq4.9}, 
we see that
\begin{equation}\label{eq6.5}
 |\nabla u(Y_Q)| \gtrsim \, 1\,,\qquad \forall \, Q\in \dd_{\F,Q_0}^*:=\{Q\in \dd_{\F,Q_0}: \ \ell(Q)\le K_0^{-1}\,\ell(Q_0)\}\,,
\end{equation}
where $Y_Q\in U_Q$ is the point constructed in Lemma \ref{l4.1}.  We recall
that the  Whitney region $U_{Q}$ has a uniformly bounded number
of connected components, which we have enumerated as $\{U^i_{Q}\}_i$.
We now fix the particular $i$
such that  $Y_Q\in U^i_Q\subset \tU^i_Q$, where the latter is the enlarged Whitney region
constructed in Definition \ref{def2.11a}.

For a suitably small $\eps_0$, say $\eps_0\ll K_0^{-6}$,
we fix an arbitrary positive $\eps<\eps_0$, and we fix also a large positive number $M$
to be chosen.
For each point $Y\in\Omega$, we set
\begin{equation}\label{eq5.1}
B_Y:= \overline{B\big(Y,(1-\eps^{2M/\alpha})\delta(Y)\big)}\,,\qquad \widetilde{B}_Y:= \overline{B\big(Y,\delta(Y)\big)}\,,
\end{equation}
where $0<\alpha<1$ is the exponent appearing in Lemma \ref{cor2.12}. For $Q\in\dd_{\F,Q_0}$,  we consider three cases.

\noindent{\bf Case 0}: $Q\in\dd_{\F,Q_0}$, with $\ell(Q)> \eps^{10}\,\ell(Q_0)$.

\smallskip
\noindent{\bf Case 1}: $Q\in \dd_{\F,Q_0}$, with $\ell(Q)\le \eps^{10}\,\ell(Q_0)$ and
\begin{equation}\label{eq5.3}
\sup_{X\in \tU^i_{Q}}\,\sup_{Z\in B_X} |\nabla u(Z)
 -\nabla u(Y_Q)|\,>\,\eps^{2M}\,.
\end{equation}

\smallskip
\noindent{\bf Case 2}: $Q\in \dd_{\F,Q_0}$, with $\ell(Q)\le \eps^{10}\,\ell(Q_0)$  and
\begin{equation}\label{eq5.2}
\sup_{X\in \tU^i_{Q}}\,\sup_{Z\in B_X} |\nabla u(Z)
 -\nabla u(Y_Q)|  \leq  \eps^{2M}\,.
\end{equation}

We trivially see that the cubes in Case 0 satisfy a packing condition:
\begin{equation}\label{eqn-case0-pack}
\sum_{\substack{Q\in \dd_{\F,Q_0} \\ {\rm Case\, 0 \, holds}}}\sigma(Q)
\,\le
\sum_{Q\in\dd_{Q_0},\, \ell(Q)>\eps^{10}\,\ell(Q_0)} \sigma(Q)
\lesssim
(\log \eps^{-1})\,\sigma(Q_0).
\end{equation}

Note that in Case 1 and Case 2 we have $Q\in \dd_{\F,Q_0}^*$, see \eqref{eq6.5}. Furthermore, if $\ell(Q)\leq \eps^{10}\ell(Q_0)$, then by \eqref{eq6.5},  \eqref{grad_est}, and either
\eqref{eqn:right-CFMS:cubes}
(which we apply in the case $p=2$,
with $X=X_0$, since $\ell(Q)\ll\ell(Q_0))$ or \eqref{eq2.53} (for general $p$, $1<p<\infty$),
and \eqref{eq4.9}, we have
\begin{equation}\label{eq6.9*}
1\lesssim  |\nabla u(Y_Q) |\lesssim \frac{u(Y_Q)}{\delta(Y_Q)}\lesssim 1\,.
\end{equation}

Regarding Case 1 we shall obtain the following packing condition:

\begin{lemma}\label{lemma:Case 1}
Under the previous assumptions, the following packing condition holds:
\begin{equation}\label{eq6.13}
\frac{1}{ \sigma (Q_0)}
\sum_{\substack{Q\in \dd_{\F,Q_0}\\{\rm Case\, 1 \, holds}}}\sigma(Q) \leq C(\eps,K_0,M,\eta)\,,
\end{equation}
\end{lemma}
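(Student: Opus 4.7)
The plan is to extract a Carleson-type measure whose total mass on the sawtooth attached to $Q_0$ is controlled by $\sigma(Q_0)$, and for which each Case 1 cube contributes a quantifiable lower bound. Letting $\mathcal R:=\bigcup_{Q\in \dd^*_{\F,Q_0}}\tU_Q^i$ denote (a mild enlargement of) the sawtooth region, the heart of the matter is the Carleson estimate
\begin{equation*}
\iint_{\mathcal R} |\nabla^2 u(Y)|^2\,\delta(Y)\,dY \,\leq\, C(\eps,K_0,\eta)\,\sigma(Q_0).
\end{equation*}
I would obtain this by integration by parts against the $p$-Laplace equation \eqref{1.1}, using the non-degeneracy $|\nabla u|\approx 1$ on $\mathcal R$ (which follows from \eqref{eq6.9*} propagated along chains of non-tangential balls via Lemma \ref{lem2.4}), so that the $p$-Laplacian is effectively a uniformly elliptic divergence-form operator with smooth coefficients. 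The pointwise bound $|\nabla^2 u|\lesssim u/\delta^2\lesssim 1/\delta$ from Lemma \ref{lem2.4}(ii), combined with \eqref{eq4.9*} and Lemma \ref{cor2.54} (or Lemma \ref{lemma:G-aver} when $p=2$), handles the boundary terms both on the outer part of $\partial\mathcal R$ at scale $\ell(Q_0)$ and on the tops of the maximal stopping cubes $Q_j\in\F$.

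Next, for each Case 1 cube $Q$, I convert the oscillation hypothesis \eqref{eq5.3} into a pointwise lower bound on the Carleson integrand. Pick $X=X_Q\in\tU_Q^i$ and $Z=Z_Q\in B_X$ realising the supremum in \eqref{eq5.3} up to a factor of two. By Definition \ref{def2.11a}, the corkscrew point $Y_Q$ is joined to $X$ by a chain of at most $\eps^{-1}$ non-tangential balls of radius comparable to $\eps^c\ell(Q)$; appending one further non-tangential ball near $Z$ produces a chain $B_0,\dots,B_N$ with $N\lesssim \eps^{-1}$ connecting $Y_Q$ to $Z$ inside $\mathcal R$. On each $B_k$ the Poincar\'e inequality applied to $\nabla u$, combined with the interior sup-to-$L^2$ estimate furnished by Lemma \ref{lem2.4}(i) (valid in the regime $|\nabla u|\approx 1$), yields
\begin{equation*}
|\nabla u(x_k)-\nabla u(x_{k+1})|^{\,2}\,\lesssim\, r_k^{1-n}\iint_{2B_k}|\nabla^2 u(Y)|^2\,dY,
\end{equation*}
where $x_k$ is the centre of $B_k$ and $r_k$ its radius. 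Telescoping via Cauchy--Schwarz across the at most $\eps^{-1}$ balls, and using $r_k\approx \delta(Y)$ on $2B_k$ together with $r_k\gtrsim \eps^{3}\ell(Q)$ and $\ell(Q)^n\approx \sigma(Q)$, one arrives at
\begin{equation*}
\eps^{N_1}\,\sigma(Q) \,\lesssim\, \sum_k \iint_{2B_k}|\nabla^2 u(Y)|^{\,2}\,\delta(Y)\,dY,
\end{equation*}
for some exponent $N_1=N_1(n,p,M)$.

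Because the balls $\{2B_k\}$ attached to a single cube $Q$ are all contained in $\tU_Q^i$, and because the enlarged Whitney regions $\tU_Q^i$ associated with distinct cubes $Q$ have overlap bounded by a function of $\eps$ and $K_0$ (the chain condition in Definition \ref{def2.11a} forces only $C(\eps,K_0)$ cubes $Q$ to have their $\tU_Q^i$ meeting any given point of $\mathcal R$), summing the previous display over Case 1 cubes and invoking the Carleson estimate gives
\begin{equation*}
\eps^{N_1}\!\!\sum_{\substack{Q\in\dd_{\F,Q_0}\\\text{Case 1 holds}}}\!\sigma(Q)\,\leq\, C(\eps,K_0)\iint_{\mathcal R}|\nabla^2 u|^{\,2}\delta\,dY\,\leq\, C(\eps,K_0,\eta,M)\sigma(Q_0),
\end{equation*}
and dividing by $\eps^{N_1}$ is \eqref{eq6.13}.

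The principal obstacle is the Carleson estimate of the first paragraph. For $p=2$ it can be read off from the identity $\Delta(|\nabla u|^{\,2})=2|\nabla^2 u|^{\,2}$ and a routine integration by parts against a cut-off approximating $\delta$. For general $1<p<\infty$ one must linearise \eqref{1.1} in the non-degeneracy regime $|\nabla u|\approx 1$, following \cite{LV-2006,LV-2007}; in this regime the coefficient $|\nabla u|^{p-2}$ is essentially a positive constant, and the Cacciopoli-type estimates of Lemmas \ref{lem2.2} and \ref{lem2.4} together with \eqref{eq4.9*} supply the boundary control. A secondary technical nuisance is that $\mathcal R$ may have many connected components, since $\Omega$ itself is not assumed connected, so the Carleson analysis must be carried out component by component before the final summation.
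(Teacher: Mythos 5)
Your argument is essentially the paper's for $p=2$: bound each Case~1 cube below by a local square-function integral, and then close with a Carleson estimate for $\iint|\nabla^2 u|^2\delta$ over the sawtooth, using $\Delta(|\nabla u|^2)=2|\nabla^2 u|^2$ and the ADR property of the sawtooth boundary.

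For general $1<p<\infty$, however, there is a genuine gap at the heart of your proposal: the assertion that $|\nabla u|\approx 1$ holds \emph{throughout} $\mathcal R$. What \eqref{eq6.9*} gives is $|\nabla u(Y_Q)|\approx 1$ at the distinguished corkscrew points $Y_Q$, and what \eqref{eq6.9} gives on $U_Q^{i,*}$ is $u\approx\delta$, which by \eqref{grad_est} yields only the \emph{upper} bound $|\nabla u|\lesssim 1$. Propagating the lower bound along chains via Lemma~\ref{lem2.4}(i) requires the oscillation of $\nabla u$ along the chain to be small -- which is precisely what fails for Case~1 cubes by \eqref{eq5.3}. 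The paper does propagate nondegeneracy along chains in this way, but only for Case~2 cubes (see \eqref{eq6.5aa} in the proof of Lemma~\ref{LVlemma}), where the oscillation is controlled. Without the lower bound $|\nabla u|\gtrsim 1$ on $\mathcal R$, two of your steps break down simultaneously: the unweighted Carleson bound $\iint_{\mathcal R}|\nabla^2 u|^2\delta\lesssim\sigma(Q_0)$ is unjustified (indeed, for $p\neq 2$ the solution $u$ is only $C^{1,\alpha}_{\rm loc}$ near critical points of $\nabla u$, so $\nabla^2 u$ need not even be locally square-integrable there, and Lemma~\ref{lem2.4}(ii) is unavailable without \eqref{eq2.51}); and the Poincar\'e/telescoping lower bound per cube likewise uses constants that depend on the nondegeneracy.

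The paper circumvents this in two linked ways. First, the Carleson estimate proved is the weighted one \eqref{eq6.square-p}, $\iint u\,F_\gamma(|\nabla u|)\,|\nabla u|^{p-2}|\nabla^2 u|^2\lesssim\sigma(Q_0)$, where the cutoff $F_\gamma(|\nabla u|)=\max(|\nabla u|^2-\gamma^2,0)$ kills the contribution of the degenerate set $\{|\nabla u|\le\gamma\}$; the integration by parts there exploits that $|\nabla u|^2$ is a subsolution of the linearized operator $L$ built from $u$ (cf.\ \eqref{eq1.9+}), not of the Laplacian. Second, the per-cube lower bound is not extracted via an $L^2$ Poincar\'e over a chain, but by locating (using \eqref{eq5.3hagg}) either a single ball or a pair of consecutive chain points where the oscillation of $\nabla u$ is $\gtrsim\eps^{2M+2}$, and then constructing a rectangle $\Gamma(Q)$ across which $\nabla u$ jumps by $\gtrsim\eps^{2M+4}$ while $|\nabla u|\ge a/4$ everywhere in $\Gamma(Q)$; the fundamental theorem of calculus then yields $\ell(Q)^n\lesssim\iint_{\Gamma(Q)}u\,F_\gamma(|\nabla u|)|\nabla u|^{p-2}|\nabla^2 u|^2$. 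The role of $\Gamma(Q)$ is precisely to guarantee that the per-cube lower bound is supported where the weight $F_\gamma|\nabla u|^{p-2}$ is bounded below. Your proposal would need to incorporate both of these devices to close the argument for $p\neq 2$.
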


On the other hand, we shall see that the cubes in Case 2 satisfy the $\eps$-local WHSA property. Given $\eps>0$, we recall that $B_Q^{***}(\eps)= B(x_Q,\eps^{-5}\ell(Q))$, see \eqref{eq2.bstarstar}. We also introduce
$$
\bqo=\bqo(\eps):= B\left(x_Q,\eps^{-8}\ell(Q)\right)\,,\qquad \dqo:= \bqo\cap E.
$$
\begin{lemma}\label{LVlemma}
Fix $\eps\in (0,K_0^{-6})$, and let $1<p<\infty$.   Suppose that $u$ is non-negative and $p$-harmonic in
$\om_Q:= \om\cap\bqo$, $u\in C(\overline{\om_Q})$, $u\equiv 0$ on $\dqo$.  Suppose also that
for some $i$, there exists a point $Y_Q\in U_Q^i$ such that
\begin{equation}\label{eq4.17a}
|\nabla u(Y_Q)| \approx 1\,,
\end{equation}
and furthermore, that
\begin{equation}\label{eq4.18}
\sup_{B_Q^{***}} u \lesssim \eps^{-5} \ell(Q)\,,
\end{equation}
and
\begin{equation}\label{eq4.19}
\sup_{X,Y\in \tU^i_{Q}}\,\sup_{Z_1\in B_Y,\,Z_2\in B_X} |\nabla u(Z_1)
 -\nabla u(Z_2)|  \leq  2\eps^{2M}\,.
\end{equation}
Then $Q$ satisfies the $\eps$-local WHSA, provided that $M$ is large enough, depending only on dimension
and on the implicit constants in the stated hypotheses.
\end{lemma}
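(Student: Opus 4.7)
The plan is to produce the WHSA approximating hyperplane and half-space directly from a first-order Taylor approximation of $u$ at the corkscrew point $Y_Q$, following Lewis-Vogel \cite{LV-2007}. Set $\mathbf{n} := \nabla u(Y_Q)/|\nabla u(Y_Q)|$ and
$$L(X) := u(Y_Q) + \nabla u(Y_Q)\cdot(X - Y_Q)\,.$$
The candidate hyperplane $P$ will be a small parallel translate of $\{L=0\}$, and the corresponding half-space will be $H = \{L > c_0\eps^{2M-4}\ell(Q)\}$ for a convenient constant $c_0$; the factor $\eps^{2M-4}$ records the size of the Taylor error established below.

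First I would prove a quantitative first-order approximation of $u$ by $L$ on the accessibility region $\tU_Q^i$. For each $X\in\tU_Q^i$, Definition~\ref{def2.11a} provides a chain of at most $\eps^{-1}$ balls $B(Y_k,\delta(Y_k)/2)$, each of radius at most $\eps^{-3}\ell(Q)/2$, connecting $Y_Q$ to $X$ and lying entirely inside the set where \eqref{eq4.19} controls $\nabla u - \nabla u(Y_Q)$. Concatenation produces a $C^1$ path of length $\lesssim \eps^{-4}\ell(Q)$, and integrating $\nabla u - \nabla u(Y_Q)$ along it yields
$$|u(X) - L(X)| \,\lesssim\, \eps^{2M-4}\ell(Q)\,,\qquad X\in\tU_Q^i\,,$$
extending by continuity to $\overline{\tU_Q^i}\cap E$. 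Condition~(2) of WHSA is then immediate: \eqref{eq6.9*} gives $u(Y_Q)\lesssim\ell(Q)$ with constant independent of $K_0$, so $\dist(Y_Q,\{L=0\})=u(Y_Q)/|\nabla u(Y_Q)| \lesssim\ell(Q)$; together with $|Y_Q - x_Q|\lesssim\ell(Q)$ (Remark~\ref{remark:X0}) and the smallness of the parallel shift defining $P$, this gives $\dist(Q,P)\leq K_0^{3/2}\ell(Q)$ for $K_0$ chosen large.

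For condition~(3), suppose to the contrary that there exists $x\in E\cap B_Q^{**}(\eps)\cap H$. Approximating $x$ from within $\tU_Q^i$ (the parameters $\eps^{-1}$ and $[\eps^3,\eps^{-3}]\ell(Q)$ in Definition~\ref{def2.11a} comfortably cover the $\eps^{-2}$ scale of $B_Q^{**}(\eps)$, so boundary points are interior limits of chains based at $Y_Q$), passing to the limit in the Taylor bound and using $u(x)=0$ forces $L(x)\lesssim\eps^{2M-4}\ell(Q)$, contradicting $x\in H$ provided $c_0$ is fixed large and $M$ is taken sufficiently large. For condition~(1), given $Z\in P\cap B_Q^{**}(\eps)$ I suppose $\delta(Z)>\eps\ell(Q)$ and set $W := Z - (\eps\ell(Q)/2)\mathbf{n}$. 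Then $\delta(W)\geq\delta(Z) - \eps\ell(Q)/2 > \eps\ell(Q)/2$, so $W\in\Omega\cap\tU_Q^i$, while $L(W) = L(Z) - (\eps\ell(Q)/2)|\nabla u(Y_Q)| \approx -\eps\ell(Q)/2$; the Taylor bound therefore yields $u(W) \approx -\eps\ell(Q)/2$, contradicting $u\geq 0$ once $M$ is large enough that $\eps^{2M-4}\ll\eps$.

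The main obstacle is the bookkeeping that supports the ``accessibility'' arguments underlying conditions~(1) and~(3): certifying that every $x\in E\cap B_Q^{**}(\eps)$ relevant to condition~(3) is actually a limit of $\tU_Q^i$-interior chains, and that every $Z\in P\cap B_Q^{**}(\eps)$ with $\delta(Z)>\eps\ell(Q)$ itself lies in $\tU_Q^i$. This calls for a careful calibration of the scales $\eps^{-2},\eps^{-3},\eps^{-5},\eps^{-8}$ appearing in $B_Q^{**}(\eps)$, $\tU_Q^i$, $B_Q^{***}(\eps)$, and $\bqo(\eps)$, together with use of the a priori upper bound \eqref{eq4.18} on $B_Q^{***}(\eps)$ to rule out pathological behaviour just outside the chain region. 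This accessibility analysis and the ensuing quantitative bookkeeping closely follow the arguments of \cite{LV-2007}, with the affine function $L$ now playing the role formerly played by the stronger assumption $k\approx 1$ in that work.
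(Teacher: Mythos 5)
Your high-level strategy is essentially the paper's: introduce the affine approximation $L(X)=u(Y_Q)+\nabla u(Y_Q)\cdot(X-Y_Q)$, propagate the gradient-oscillation bound \eqref{eq4.19} along $\eps$-chains to get $|u-L|\lesssim \eps^{2M-C}\ell(Q)$ on $\tU^i_Q$, and read off the WHSA hyperplane and half-space from level sets of $L$. But the step you concede as "bookkeeping" --- that every point $W$ with $\delta(W)\gtrsim\eps\ell(Q)$ lying in $B_Q^{**}\cap H$ belongs to $\tU^i_Q$, and every $x\in E\cap B_Q^{**}\cap H$ is a limit of $\tU^i_Q$-chains --- is not a calibration issue at all; it is the crux, and as stated your argument for it is false. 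Membership in $\tU^i_Q$ requires an explicit chain of admissible Harnack balls from $Y_Q$: since $\Omega$ may be disconnected and there is no Harnack chain hypothesis, the mere inequality $\delta(W)>\eps\ell(Q)/2$ does not place $W$ in $\tU^i_Q$, and the mere inequality $\delta(Z)>\eps\ell(Q)$ does not let you construct $W:=Z-(\eps\ell(Q)/2)\mathbf{n}$ and "push it" into the accessibility region. Without this, the contradictions in your arguments for conditions (1) and (3) never get off the ground.

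What the paper actually does here is a two-stage bootstrap that your sketch omits. First (Claims 6.14--6.32 and \eqref{eq5.35}), after normalizing so that $0\in E\cap\partial\tB_{Y_Q}$ and $Y_Q=\delta(Y_Q)e_{n+1}$, one shows from the Taylor estimate, the decay estimate \eqref{eq4.18}, and the H\"older bound of Lemma \ref{cor2.12} that \emph{every boundary point $w\in E\cap\partial\tB_X$ reachable from a chain point $X\in\tU^i_Q$} satisfies $|w_{n+1}|\lesssim\eps^{M/2}\ell(Q)$; i.e., the boundary is already nearly flat wherever the existing chains touch it. Only once you know this can you extend the chains (Claim \ref{claim5.40}): descend along the $e_{n+1}$-axis, translate horizontally at a safe height, and descend again, each time using the flatness to guarantee you will not run into $E$. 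This is what proves $\mathcal{O}\subset\tU^i_Q$, hence condition (3), and only then, by a contradiction argument using the Taylor bound at the accessible points (Claim \ref{claim5.42}), condition (1). Your proposal takes the conclusion of the chain construction as an input rather than establishing it, so the proof as written has a genuine gap exactly where the lack of connectivity makes the problem hard.
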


Assuming these results momentarily we can complete the proof of Theorem \ref{t1} and Theorem \ref{t3} as follows. First we see that we can apply Lemma \ref{LVlemma}  to the cubes in Case 2. Indeed, let $Q$ be a cube such that 
$Q\in \dd_{\F,Q_0}$, $\ell(Q)\le \eps^{10}\,\ell(Q_0)$, and \eqref{eq5.2} holds.  Hence
\eqref{eq4.17a} follows by virtue of \eqref{eq6.9*}, while \eqref{eq4.18} holds
by Lemma \ref{lemma:G-aver}
applied with $B=2B_Q^{***}$ (or Lemma \ref{cor2.54}, with $B(y,s) =2B_Q^{***}$), and 
\eqref{eq4.9*}.
Moreover, \eqref{eq4.19} follows trivially from
\eqref{eq5.2}.  Thus, the hypotheses of Lemma \ref{LVlemma} are all verified and hence $Q$ satisfies the $\eps$-local WHSA condition. In particular,  the cubes $Q\in \dd_{\F,Q_0}$,
which belong to the bad collection $\B$ of cubes in $\dd( E)$
for which the $\eps$-local WHSA condition fails, must be as in Case 0 or Case 1. By \eqref{eqn-case0-pack} and \eqref{eq6.13} these cubes satisfy the packing estimate
\begin{equation}\label{pack-saw}
\sum_{Q\in\B\cap \dd_{\F,Q_0}}\! \sigma(Q) \leq  C_\eps \,\sigma(Q_0)\,.
\end{equation}
For each $Q_0\in\dd( E)$, there is a family $\F\subset\dd_{Q_0}$
for which \eqref{pack-saw}, and also the ``ampleness" condition \eqref{eq4.8}, hold uniformly.
We may therefore invoke a well known lemma of John-Nirenberg type to deduce that
\eqref{eq2.pack2} holds for all $\eps\in (0,\eps_0)$,
and therefore to conclude that $ E$ satisfies the WHSA condition, Definition \ref{def2.14}. Hence $ E$ is UR
by Proposition \ref{prop2.20}.

\smallskip

The rest of the section is devoted to the proof Lemmas \ref{lemma:Case 1} and Lemma \ref{LVlemma}.  We shall first prove 
Lemma \ref{lemma:Case 1} in the relatively simpler linear case $p=2$, see subsection \ref{subs1}. The proof of 
Lemma \ref{lemma:Case 1} in the general case $1<p<\infty$ is a bit more delicate and given in subsection \ref{subs2}. Lemma \ref{LVlemma} is proved in subsection \ref{subs3}. Finally, the proof of Corollary \ref{c1} is given in subsection \ref{subs4}.

\smallskip

Before passing to the subsections we first introduce some additional notation to be used in the sequel. We augment $\tU^i_{Q}$ as follows. Set
\begin{equation}\label{Ustar}
\W^{i,*}_{Q}:= \left\{I\in \W: I^*\,\, {\rm meets}\, B_Y\, {\rm for\, some\,} Y \in \left(\cup_{X\in\tU^i_{Q}}
B_X\right)\right\}
\end{equation}
(and define $\W^{j,*}_{Q}$ analogously for all other $\tU^j_Q$), and set
\begin{equation}\label{Ustar2}
U^{i,*}_{Q} := \bigcup_{I\in\W^{i,*}_{Q}} I^{**}\,,\qquad U^{*}_{Q} := \bigcup_j U^{j,*}_{Q}
\end{equation}
where $I^{**}=(1+2\tau)I$ is a suitably fattened Whitney cube, with $\tau$ fixed as above.
By construction,
$$
\tU^i_{Q}\,
\subset\,
\bigcup_{X\in\tU^i_{Q}} B_X\,
\subset
\bigcup_{Y\in\cup_{X\in\tU^i_{Q}} B_X} B_Y
\subset\, U^{i,*}_{Q}\,,
$$
and for all $Y\in U_{Q}^{i,*}$, we have that $\delta(Y)\approx \ell(Q)$ (depending of course on $\eps$).
Moreover, also by construction,
there is a Harnack path connecting any pair of points in
$U^{i,*}_{Q}$ (depending again on $\eps$),
and furthermore, for every
$I\in\W^{i,*}_{Q}$ (or for that matter for every $I\in \W^{j,*}_{Q},\, j\neq i$),
$$
\eps^{s}\,\ell(Q)\lesssim \ell(I) \lesssim \eps^{-3}\,\ell(Q),
\qquad
\dist(I,Q)\lesssim \eps^{-4} \,\ell(Q)\,,
$$
where $0<s =s(M,\alpha)$.
Thus, by Harnack's
inequality and \eqref{eq6.9*},
\begin{equation}\label{eq6.9}
C^{-1} \delta(Y)  \leq u(Y)   \leq C \delta(Y)\,, \qquad \forall\, Y\in U^{i,*}_{Q}\,,
\end{equation}
with $C=C(K_0,\eps,M)$.  
Moreover, for future reference, we note that
the upper bound for $u$ holds in all of $U_Q^*$, i.e.,
\begin{equation}\label{eq4.10a}
u(Y)   \leq C \delta(Y)\,, \qquad \forall\, Y\in U^{*}_{Q}\,,
\end{equation}
by  \eqref{eqn:right-CFMS} (resp. \eqref{eq2.53}) and \eqref{eq4.9}, where again $C=C(K_0,\eps,M)$.

\subsection{Proof of Lemma \ref{lemma:Case 1}  in the linear case ($p=2$)}\label{subs1}
We here complete the proof of estimate \eqref{eq6.13} in the relatively simpler linear case $p=2$. To start the proof of  \eqref{eq6.13}, we fix $Q\in \dd_\F,Q_0$ so that Case 1  holds. We see that if we choose $Z$ as in \eqref{eq5.3}, and use the mean value property of harmonic functions, then
$$\eps^{2M}  \leq C_\eps\left(\ell(Q)\right)^{-(n+1)}\iint_{B_{Z}\cup\, B_{Y_Q}}|\nabla u(Y) -\vec{\beta}| dY\,,$$
where $\vec{\beta}$ is a constant vector at our disposal. By Poincar\'e's inequality, see, e.g., \cite[Section 4]{HM-I} in this context,
we obtain that
$$
\sigma(Q) \lesssim   \iint_{U^{i,*}_{Q}} |\nabla^2 u(Y)|^2 \delta(Y)\,dY
\lesssim
 \iint_{U^{i,*}_{Q}} |\nabla^2 u(Y)|^2 u(Y)\,dY\,,
 $$
where the implicit constants depend on $\eps$, and
in the last step we have used \eqref{eq6.9}.
Consequently,
\begin{align}\label{eq6.11-ant}
\sum_{\substack{Q\in \dd_{\F,Q_0}\\ {\rm Case\, 1 \, holds} }}  \sigma(Q)
&\lesssim
\sum_{\substack{Q\in \dd_{\F,Q_0}
\\ \ell(Q)\leq\eps^{10}\ell(Q_0) }}
\iint_{U^{*}_Q} |\nabla^2 u(Y)|^2 u(Y)\,dY\\[4pt]
&\lesssim   
\iint_{\Omega^{*}_{\F,Q_0} } |\nabla^2 u(Y)|^2 u(Y)\,dY,\nonumber
 \end{align}
where
\begin{equation}\label{sawtooth-appen-HMM}
\Omega^{*}_{\F,Q_0} := \interior\bigg(\bigcup_{\substack{Q\in \dd_{\F,Q_0}\\ \ell(Q)\leq\eps^{10}\ell(Q_0) }}
U^{*}_Q\bigg),
\end{equation}
and where we have used that the enlarged Whitney regions $U^{*}_Q$ have bounded overlaps.

Take an arbitrary $N>1/\eps$ (eventually $N\to\infty$), and augment
$\F$ by adding to it all subcubes $Q\subset Q_0$ with $\ell(Q)\leq 2^{-N}\,\ell(Q_0)$.  Let
$\F_N\subset\dd_{Q_0}$ denote the collection of maximal cubes of this augmented family.
Thus, $Q\in\dd_{\F_N, Q_0}$ iff $Q\in\dd_{\F,Q_0}$ and $\ell(Q)>2^{-N}\,\ell(Q_0)$. Clearly, $\dd_{\F_N, Q_0}\subset \dd_{\F_{N'}, Q_0}$ if $N\le N'$ and therefore $\Omega^{*}_{\F_N,Q_0}\subset \Omega^{*}_{\F_{N'},Q_0}$
(where $\Omega^{*}_{\F_N,Q_0}$ is defined as in \eqref{sawtooth-appen-HMM} with
$\F_N$ replacing $\F$). By monotone convergence and \eqref{eq6.11-ant}, we have that
\begin{equation}\label{eq6.11}
\sum_{\substack{Q\in \dd_{\F,Q_0}\\ {\rm Case\, 1 \, holds} }}  \sigma(Q)
\lesssim
\limsup_{N\to \infty}
\iint_{\Omega^{*}_{\F_N,Q_0} } |\nabla^2 u(Y)|^2 u(Y)\,dY.
 \end{equation}
It therefore suffices to establish bounds for the latter integral that are uniform in $N$, with $N$ large.

Let us then fix $N>1/\eps$.   Since $\Omega^{*}_{\F_N,Q_0}$ is a finite union of fattened
Whitney boxes,
 we may now integrate by parts, using the identity
 $2|\nabla \partial_k u|^2 = \dv \nabla (\partial_k u)^2$ for harmonic functions, to obtain that
 \begin{multline}\label{eq6.12}
  \iint_{\Omega^{*}_{\F_N,Q_0}}
 |\nabla^2 u(Y)|^2 u(Y)\,dY 
\lesssim \int_{\partial \Omega^{*}_{\F_N,Q_0}}
 \left(|\nabla ^2 u|\, |\nabla u|\, u + |\nabla u|^3\right) dH^n\\
\leq \,C_\eps \,H^n(\partial \Omega^{*}_{\F_N,Q_0}),
 \end{multline}
 where in the second inequality we have used
the standard estimate
  $$\delta(Y)  |\nabla^2 u(Y)|  \lesssim  |\nabla u(Y)|
 \lesssim   \frac{u(Y)}{\delta(Y) }\,,$$
 along with \eqref{eq4.10a}.  We observe that $\Omega^{*}_{\F_N,Q_0}$ is a sawtooth domain in the sense of
 \cite{HMM}, or to be more precise, it is a union of a bounded number, depending on $\eps$, of such sawtooths, one for 
each maximal sub-cube of $Q_0$ with length on the order of $\eps^{10}\ell(Q_0)$. By \cite[Appendix A]{HMM} each of the previous sawtooth domains is ADR uniformly in $N$. Hence, its union is upper ADR uniformly in $N$ with constant depending on the number of sawtooth domains in the union, which ultimately depends on $\eps$. Therefore
$$H^n(\partial \Omega^{*}_{\F_N,Q_0})   \leq C_\eps \left(\diam(\partial \Omega^{*}_{\F_N,Q_0})\right)^n
  \leq  C_\eps \,\sigma(Q_0)\,.$$
Combining the latter estimate with \eqref{eq6.11} and \eqref{eq6.12}, we obtain \eqref{eq6.13}, as desired,
in the case $p=2$.

\subsection{Proof of Lemma \ref{lemma:Case 1} in the general case  ($1<p<\infty$)} \label{subs2} We here prove \eqref{eq6.13} for general $p$, $1<p<\infty$,  by proceeding along the lines of the proof of Lemma 2.5 in \cite{LV-2006}.  We fix $Q\in \dd_\F,Q_0$ so that Case 1  holds and hence \eqref{eq5.3} holds. Let us recall that we have verified estimates \eqref{eq6.9*}, \eqref{eq6.9}, and \eqref{eq4.10a} for all $p$, $1<p<\infty$.

Recall that if $X\in \tU^i_{Q}$,
then by definition $X$ can be connected to
some $\tilde Y\in U^i_{Q}$, and then to $Y_Q\in U^i_{Q}$, by a chain of
at most $C\eps^{-1}$ balls of the form $B(Y_k,\delta(Y_k)/2)$, with
$\eps^3\ell(Q)\leq\delta(Y_k)\leq \eps^{-3}\ell(Q)$. Note that using the triangle inequality and the definition of $\widetilde{U}^i_{Q}$, we may suppose that $Y_{k+1}\in
B(Y_k,3\delta(Y_k)/4)\subset B_{Y_k}$, otherwise we increase the chain by introducing some intermediate points and the new chain will have essentially the same length. 
Fix now $Q$, a cube in Case 1, and by \eqref{eq5.3} we can pick $X\in \tU^i_{Q}$ so that 
$$
\sup_{Y\in B_X} |\nabla u(Y)
 -\nabla u(Y_Q)|\,>\,\eps^{2M}\,.
$$
As observed before we can form a Harnack chain connecting $X$ and $Y_Q$ so that $Y_1=Y_Q$ and $Y_l=X$ and $l\le C\eps^{-1}$. 
Then, the previous expression can be written as 
\begin{equation}\label{eq5.3ha}
\sup_{Y\in B_{Y_l} }|\nabla u(Y)
 -\nabla u(Y_1)|\,>\, \eps^{2M}\,.
\end{equation}
Obviously we may assume that
\begin{equation}\label{eq5.3haa}
\sup_{Y\in B_{Y_j} }|\nabla u(Y)
 -\nabla u(Y_1)|\leq  \eps^{2M}\,,
\end{equation}
whenever $1<j\leq l-1$, and  $l>1$, since otherwise we shorten the chain (and work with the first $Y_j$ for which \eqref{eq5.3ha} holds). This and the fact that $Y_{j+1}\in B_{Y_j}$ for every $1\le j\le l-1$ imply that 
\begin{equation}\label{eq5.3hagg---}
|\nabla u(Y_j)|\geq |\nabla u(Y_1)|-\eps^{2M},\ \mbox{ for $1\leq j\leq l$}.
\end{equation}
Furthermore, using the triangle inequality
\begin{equation}\label{eq5.3hagg-}
\eps^{2M}\leq \sup_{Y\in B_{Y_l} }|\nabla u(Y)
 -\nabla u(Y_l)|+\sum_{j=1}^{l-1}|\nabla u(Y_{j+1})
 -\nabla u(Y_j)|.
\end{equation}
Hence, using this and the fact that $l\lesssim \eps^{-1}$ we have that either
\begin{equation}\label{eq5.3hagg}
\begin{split}
(i)  &\quad \sup_{Y\in B_{Y_l} }|\nabla u(Y)
 -\nabla u(Y_l)|\geq \eps^{2M+2}, \mbox{ or}\\
(ii) &\quad|\nabla u(Y_{j+1})
 -\nabla u(Y_j)|\geq\eps^{2M+2},\mbox{ for some $1\leq j\leq l-1$}.
\end{split}
\end{equation}
By  \eqref{eq4.10a} and \eqref{grad_est}  we have
\begin{equation}\label{eq:aaas}
|\nabla u(Y)| \leq C_\eps\,,\qquad \forall\, Y\in U^*_Q.
\end{equation}

In scenario $(i)$ of \eqref{eq5.3hagg} we take $Y$, a point where the sup is attained. This choice, \eqref{eq:aaas}  and the first inequality in \eqref{lem2.4:(i)}, imply that $|Y-Y_l|\approx_\eps \ell(Q)$. We then construct $\Gamma_0(Q)$ a (possibly rotated) rectangle as follows. The base and the top are two $n$-dimensional cubes of side length $c_\eps\, \ell(Q)$, with $c_\eps$ chosen sufficiently small, centered respectively at the points $Y$ and $Y_l$,  and lying in the two parallel  hyperplanes passing through the points $Y$ and $Y_l$ being perpendicular to the vector joining these two points. Note that for this rectangle, all side lengths are of the order of $\ell(Q)$ with implicit constants possibly depending on $\eps$.
In scenario $(ii)$ of \eqref{eq5.3hagg} we do the same construction  with $Y_{j+1}$ and $Y_j$ in place of $Y$ and $Y_l$ and define $\Gamma_0(Q)$ which will verify the same properties. Note that in either case, \eqref{eq:aaas}  and the first inequality in \eqref{lem2.4:(i)} give
with the property that
\begin{equation}\label{gap}
|\nabla u(Y)
 -\nabla u(W)|\geq\eps^{2M+4}
 \end{equation}
 whenever $W$, $Y$ are in  the base and top of the parallelepiped, respectively. By construction, at least the top, which we denote by  $t(Q)$, is centered on $Y_j$, for some $1\leq j\leq l$. We observe that by  \eqref{eq5.3hagg---} and \eqref{eq6.9*}, since $Y_1:= Y_Q$, and since $\eps$ is very small,
 we have for each $Y_j, 1\leq j\leq l$,
 \begin{equation}\label{eq5.3hagg---f}
|\nabla u(Y_j)|\, \geq \,a \,,
\end{equation}
for some uniform constant $a$ independent of $\eps$, and therefore by \eqref{lem2.4:(i)},
we also have
 \begin{equation}\label{eq6.7+}
 |\nabla u(Y)|\geq a/2\,,\qquad \forall \, Y\in t(Q)\,,
 \end{equation}
provided that we take $c_\eps$ small enough, since $\diam(t(Q))\approx c_\eps\,\ell(Q)$.
Moving downward, that is, from top to base, through $\Gamma_0(Q)$, along slices parallel to $t(Q)$, we stop the first time that we
reach a slice $b(Q)$ which contains a point $Z$ with $|\nabla u(Z)| \leq a/4$.  If there is such a slice, we form
a new rectangle $\Gamma(Q)$ with base $b(Q)$ and top $t(Q)$; otherwise, we set
$\Gamma(Q) := \Gamma_0(Q)$, and let $b(Q)$ denote the base in this case as well.
In either case, $\dist(b(Q),t(Q)) \approx \ell(Q)$, with implicit constants possibly depending on $\eps$,
by \eqref{lem2.4:(i)} and \eqref{eq6.7+}.  Note that by construction, and the continuity of $\nabla u$,
 \begin{equation}\label{eq6.8+}
 |\nabla u(Y)|\geq a/4\,,\qquad \forall \, Y\in \Gamma(Q)\,,
 \end{equation}
 and that $|\Gamma(Q)| \approx \ell(Q)^{n+1}$, again with implicit constants that may depend on $\eps$.
 Moreover, if $\Gamma(Q) = \Gamma_0(Q)$, then \eqref{gap} holds for all $W\in b(Q)$ and $Y\in t(Q)$.
 Otherwise, if $\Gamma(Q)$ is strictly contained in $\Gamma_0(Q)$,
 then, since $\diam(b(Q)) \approx c_\eps \,\ell(Q)$ with $c_\eps$ small, and since by construction
 $b(Q)$ contains a point $Z$ with $|\nabla u(Z)|= a/4$, it follows that $|\nabla u(W)|\leq 3a/8$, for all
 $W\in b(Q)$, by \eqref{lem2.4:(i)}.  Hence, in either situation, since $a/8 \gg \eps^{2M+4}$,
 we have
 \begin{equation}\label{gap2}
|\nabla u(Y)
 -\nabla u(W)|\, \geq\, \eps^{2M+4}  \,, \qquad \forall\, W\in b(Q),\, Y\in t(Q)\,.
 \end{equation}
 We let $\gamma=a/8$ and set
$$F_\gamma (|\nabla u|) := \max(|\nabla u|^2-\gamma^2, 0)\,. $$ Then by \eqref{eq6.8+} we see that
\begin{equation}\label{eq6.8++}
 F_\gamma (|\nabla u|)\geq a^2/64\,,\qquad \forall \, Y\in \Gamma(Q)\,.
 \end{equation}
Furthermore, by \eqref{gap2},  fundamental theorem of calculus, 
\eqref{eq6.9}, \eqref{eq6.8+} and \eqref{eq6.8++}, we have,
 $$\ell(Q)^n \lesssim  
 \dint_{\Gamma(Q)}u\,|\nabla^2 u|^2\,dX \lesssim  
 \dint_{\Gamma(Q)}u\,F_\gamma(|\nabla u|)\,|\nabla u|^{p-2}\,|\nabla^2  u|^2\,dY\,,$$
 where the implicit constants depend on $\eps$. In particular, since $\Gamma(Q)\subset U^{i,*}_{Q}\subset U_Q^*$, by ADR   we obtain
$$\sigma(Q) \lesssim 
\iint_{U^{*}_{Q}} uF_\gamma(|\nabla u|)|\nabla u|^{p-2}|\nabla^2  u|^2\,dY\,,$$
where the implicit constants still depend on $\eps$, and this estimate holds for all cubes $Q\in \dd_\F,Q_0$ so that Case 1  holds.
Hence, 
\begin{equation}\label{eq6.11p}
\sum_{\substack{Q\in \dd_{\F,Q_0}\\{\rm Case\, 1 \, holds}}}\sigma(Q)  \lesssim  \iint_{\Omega^{*}_{\F,Q_0}} u\,F_\gamma(|\nabla u|)\,|\nabla u|^{p-2}\,|\nabla^2  u|^2\,dY\,,
 \end{equation}
 where $\Omega^{*}_{\F,Q_0}$ was defined in \eqref{sawtooth-appen-HMM}  and where we have used that the enlarged Whitney regions $U^{*}_Q$ have bounded overlaps.
To prove \eqref{eq6.13} in the general case $1<p<\infty$ it therefore suffices to establish the local square function bound
\begin{equation}\label{eq6.square-p}
 \iint_{\Omega^{*}_{\F,Q_0}}u\,F_\gamma(|\nabla u|)\,|\nabla u|^{p-2}\,|\nabla^2  u|^2\,dY \lesssim \sigma(Q_0)\,,
\end{equation}
where, as we recall,  $u$
is a non-negative $p$-harmonic function in the open set $\Omega_0:=\Omega \cap B(x_{Q_0},Cr_{Q_0})$, vanishing
on $\Delta(x_{Q_0},Cr_{Q_0})$.

To start the proof of \eqref{eq6.square-p}, for each $Q\in\dd( E)$,
we define a further fattening of $U_Q^*$ as follows.  Set
\begin{align*}
U^{i,**}_{Q} &:=\bigcup_{I\in\W^{i,*}_{Q}} I^{***}\,,\qquad U^{**}_{Q} := \bigcup_i U^{i,**}_{Q},\\
U^{i,***}_{Q} &:= \bigcup_{I\in\W^{i,*}_{Q}} I^{****}\,,\qquad U^{***}_{Q} := \bigcup_i U^{i,***}_{Q},
\end{align*}
where $I^{***}=(1+3\tau)I$, and $I^{****}=(1+4\tau)I$ are fattened Whitney regions,
for some fixed small $\tau$ as above, see \eqref{Ustar}-\eqref{Ustar2}. Notice that $I^{**} \subset I^{***}\subset I^{****}$.
We observe  that the fattened Whitney regions $U_Q^{***}$ have bounded
overlaps, say
\begin{equation}\label{bounded-overlap}
\sum_{Q\in \dd( E)} 1_{U^{***}_Q}(Y) \leq M_0\,,
\end{equation}
where  $M_0<\infty$ is a uniform constant depending on $K_0$, $\eps$, $\tau$ and $n$.
Next, let $\{\eta_Q\}_Q$ be a partition of unity adapted to $U^{**}_{Q}$. That is
\begin{enumerate}
\item $\sum_{Q} \eta_Q (Y) \equiv 1$  whenever $Y \in  \Omega$. 

\smallskip

\item supp $\eta_Q \subset U^{**}_{Q}$.

\smallskip

\item $\eta_Q\in C_0^\infty(\ree)$,  with
$0 \leq \eta_Q \leq 1$, $\eta_Q \geq c$ on $U^{*}_{Q}$ and $|\nabla \eta_Q| \leq C \ell(Q)^{-1}$.
\end{enumerate}
Set
$$\dd_{\F.Q_0,\eps} := \left\{Q\in\dd_{\F,Q_0}: \,
\ell(Q)\leq\eps^{10}\ell(Q_0)\right\}\,,$$
and recall, see \eqref{sawtooth-appen-HMM}, that
\begin{equation*}
\Omega^{*}_{\F,Q_0} := \interior\bigg(\bigcup_{Q\in \dd_{\F,Q_0,\eps}}
U^{*}_Q\bigg)\,.
\end{equation*}
Given a large number $N\gg \eps^{-10}$, set
$$\Lambda =\Lambda(N) = \left\lbrace Q\in \dd( E) :\, 
U^{**}_{Q} \cap \Omega^{*}_{\F,Q_0} \not= \emptyset \,\mbox{ and } \,
 \ell(Q)\geq N^{-1}\ell(Q_0) 
\right\rbrace\,.$$
Eventually, we shall let $N \rightarrow \infty$. Let 
$$
 I_1(N):=\sum_{Q \in \Lambda(N)} \iint u  \,
F_\gamma\big(|\nabla u|\big) \,\bigg(\sum_{i,j=1}^{n+1} u_{y_iy_j}^2\bigg)\, \eta_Q \,dY\,
$$
and note, by positivity of $u$, the properties of $\eta_Q$, that
we then have
$$
\iint_{\Omega_{\F,Q_0}^*} u \, F_\gamma\big(|\nabla u|\big)\,|\nabla^2u|^2\, dY\lesssim \lim_{N\to \infty} I_1(N)\,.
$$
We now fix $N$ and we intend to perform integration by parts and in this argument we will exploit that $|\nabla u|^2$ is a subsolution to a certain linear PDE defined based on $u$. To describe this in detail, let  $Q \in \Lambda(N)$ be such that  $F_\gamma \big(|\nabla u(Y)|\big)\neq 0$ for some $Y\in U_Q^{**}$. Then $|\nabla u(Y)|\geq \gamma$  and there exists $C=C(\gamma)\geq 1$, such that
\begin{equation}\label{nondeg}
\mbox{$C^{-1}\leq |\nabla u(X)|\lesssim 1$ whenever $X\in B(Y,\delta(Y)/C)$},
 \end{equation}
and where the upper bound follows from \eqref{eq4.10a} and the lower bound uses also \eqref{lem2.4:(i)}.  Let $\zeta=\nabla u\cdot\xi$, for some $\xi\in\mathbb R^{n+1}$. Then $\zeta$
  satisfies, at $ X\in  B(Y,\delta(Y)/C)$,  the partial
differential equation
\begin{equation}\label{eq1.5} 
L \zeta 
= 
\nabla \cdot 
\big[ 
( p - 2 )\, | \nabla u |^{ p - 4}\, (\nabla  u\cdot\nabla  \zeta)   \, \nabla  u  + | \nabla  u |^{ p - 2}\, \nabla \zeta 
\big] = 0
\end{equation} 
as is seen by a straightforward calculation from differentiating the $p$-Laplace partial differential equation for $u$ with respect to $\xi$. Note that \eqref{eq1.5} can be written in the form
\begin{equation}\label{eq1.6}  L \zeta \, = \, \sum_{i,j = 1 }^{n+1} \,
\frac{ \partial }{ \partial y_i} \, \big[\,   b_{i j }
( \cdot ) \, \zeta_{y_j} ( \cdot )  \big]  = 0,
\end{equation}
where,
\begin{equation}\label{eq1.7}
 b_{ij} ( Y )  = | \nabla  u  |^{ p - 4}\,
 \big [ ( p - 2 ) \, u_{y_i} \, u_{y_j}  + \delta_{ij}\, | \nabla  u |^2 \big] ( Y ) ,\quad  \, 1 \leq i, j  \leq  n+1,
\end{equation}  
and $\delta_{ij} $ is the Kronecker $ \delta. $ Clearly we also have
\begin{equation}\label{eq1.8} 
L    u ( Y) 
=  
( p - 1) \,  \nabla \cdot  \big[\, | \nabla  u |^{ p - 2} \, \nabla  u  \, \big]( Y)   = 0.
\end{equation}
In particular,  $ u $, and $(\nabla u\cdot\xi $) for each $\xi\in\mathbb R^{n+1}$  all satisfy the  divergence
form partial differential equation \eqref{eq1.6}. 
 
It is easy to see that $(b_{ij})_{ij}$ satisfies the following degenerate ellipticity condition: for every $\xi\in \re^{n+1}$ one has 
\begin{multline}
\sum_{i,j=1}^{n+1} b_{ij}\,\xi_i\,\xi_j
=
(p-2)\,|\nabla u|^{p-4}\,\sum_{i,j=1}^{n+1} u_i\,u_j\,\xi_i\,\xi_j
+
|\nabla u|^{p-2}\,\sum_{i,j=1}^{n+1} \delta_{ij}\,\xi_i\,\xi_j
\\
=
(p-2)\,|\nabla u|^{p-4}\,\big(\nabla u\cdot \xi\big)^2
+
|\nabla u|^{p-2}\,|\xi|^2
\ge
\min\{1, p-1\}\,
|\nabla u|^{p-2}\,|\xi|^2,
\label{eq:ellipt-B}
\end{multline}
where the last inequality is immediate when $p\ge 2$ and uses the Cauchy-Schwarz inequality when $1<p<2$.
Hence, $|\nabla u|^2$ is a subsolution to the PDE defined in \eqref{eq1.6}, \eqref{eq1.7} as it is seen from the calculation
\begin{equation}\label{eq1.9+}
L\big(|\nabla u|^2\big)=2\sum_{i,j,k=1}^{n+1}b_{ij}\,u_{y_iy_k}\,u_{y_jy_k}
\gtrsim\,|\nabla u|^{p-2}\,\bigg (\sum_{i,j=1}^{n+1} u_{y_iy_j}^2\bigg).
\end{equation}
Now, using \eqref{eq1.9+}  and that \eqref{nondeg} holds for every $Y$ such that $F_\gamma(|\nabla u(Y)|)\neq 0$ we see that
$I_1(N)\lesssim J_1(N)$ where
$$
 J_1(N):=\sum_{Q \in \Lambda(N)} \iint u  \,
F_\gamma\big(|\nabla u|\big) L(|\nabla u|^2)
\, \eta_Q \,dY\,.
$$
Hence it  suffices to establish bounds for the  integral $J_1:=J_1(N)$ that are uniform in $N$, with $N$ large. In the following we let $v= F_\gamma(|\nabla u|)$ and we note that $\nabla v=\nabla(|\nabla u|^2)$ whenever $ v>0$. Using this and integration by  parts we see that
$$
 J_1=-J_2-J_3-J_4,
$$
where
\begin{align*}
J_2&=\sum_{Q \in \Lambda(N)}\iint v\sum_{i,j=1}^{n+1}b_{ij}u_{y_i}v_{y_j}\, \eta_Q \,dY,\\
J_3&=\sum_{Q \in \Lambda(N)}\iint u\sum_{i,j=1}^{n+1}b_{ij}v_{y_i}v_{y_j}\, \eta_Q \,dY,\\
J_4&=\sum_{Q \in \Lambda(N)}\iint  uv\sum_{i,j=1}^{n+1}b_{ij}v_{y_j}\, (\eta_Q)_{y_i} \,dY.
\end{align*}
We will estimate $J_4$ first.  Set $\Lambda_1= \Lambda_{11} \cup \Lambda_{12}$, where
$$\Lambda_{11}:= \left\{Q\in \Lambda:  \, U_Q^{**}\, {\rm meets }\,
\Omega\setminus \Omega_{\F,Q_0}\right\}\,,$$
and
$$\Lambda_{12}:=\, \left\{Q\in \Lambda:  \, U_Q^{**}\, {\rm meets }\,  U_{Q'}^{**} \,{\rm such\,that}\,
\ell(Q')<N^{-1}\ell(Q_0)\right\} \,.$$
From the definition of $\eta_Q$,  we obtain
\begin{multline*}
|J_4| \lesssim   
\sum_{Q \in \Lambda_{11}}
\iint u \, v \sum_{i,j=1}^{n+1}
| u_{ij}|\, |u_{i}|\, |(\eta_Q)_{j}|\, dY
+\sum_{Q \in \Lambda_{11}}
\iint u \, v \sum_{i,j=1}^{n+1}
| u_{ij}|\, |u_{i}|\, |(\eta_Q)_{j}|\, dY
\\
=:
J_{51}+J_{52}.
\end{multline*}
Notice that, equivalently,  $\Lambda_{11}$
is the subcollection of $Q \in \Lambda_1$ such that $U^{**}_{Q}$ meets $\partial\Omega^{*}_{\F,Q_0}$. We first estimate $J_{51}$. Note that by \eqref{lem2.4:(i)}, \eqref{eq4.10a} and Harnack's inequality,
\begin{equation} \label{eq6.17+}
\delta(Y)|\nabla u(Y)|\lesssim u(Y) \lesssim \delta(Y) \approx \ell(Q)\,,
\end{equation}
whenever $ Y\in U_Q^{***}$. Furthermore, if $v\neq 0$ for some $ Y\in U_Q^{***}$, then using \eqref{nondeg} and 
\eqref{lem2.4:(ii)}, we also have 
\begin{equation} \label{eq6.17++}
(\delta(Y))^2|\nabla^2 u(Y)|\lesssim u(Y) \lesssim \delta(Y) \approx \ell(Q)\,.
\end{equation}
In particular, $u|\nabla \eta_Q|\lesssim 1$, by the construction of $\eta_Q$, $|\nabla u(Y)|\lesssim1$ whenever $ Y\in U_Q^{***}$, and    $\delta(Y)|\nabla^2 u(Y)|\lesssim1$ whenever $ Y\in U_Q^{***}$ and $v\neq 0$. Thus,
$$
J_{51}
\lesssim \sum_{Q \in \Lambda_{11}}\ell(Q)^n
\lesssim \sum_{Q \in \Lambda_{11}}H^{n}(U^{***}_{Q} \cap \partial\Omega^{*}_{\F,Q_0})
\lesssim \sum_{Q \in \Lambda_{11}}H^{n}(\partial\Omega^{*}_{\F,Q_0}) \lesssim \sigma(Q_0)
$$
where we have used  that $\partial\Omega^{*}_{\F,Q_0}$ is ADR, see \cite{HMM}, and the bounded overlap property \eqref{bounded-overlap}. To estimate
$J_{52}$ we observe that for each  $Q \in \Lambda_{12}$,
$\ell(Q)\approx N^{-1} \ell(Q_0)$, by properties of Whitney regions.
Hence,
by a slightly simpler version of the  argument used for the estimate of $J_{51}$ we obtain
$$
J_{52} \lesssim \sum_{Q \in \Lambda_{12}}\sigma(Q)\,
  \lesssim \sigma(Q_0).
$$
Therefore, $|J_4| \lesssim   J_{51}+J_{52} \lesssim \sigma(Q_0)$.

To handle $J_2$ we use that $u$ is a solution to \eqref{eq1.6}. Indeed, by integration by parts, using the identity $2vv_{y_j}=(v^2)_{y_j}$ we see that
\begin{align*}
2\,J_2=\sum_{Q \in \Lambda(N)}\iint \sum_{i,j=1}^{n+1}b_{ij}u_{y_i}(v^2)_{y_j}\, \eta_Q \,dY
=-\sum_{Q \in \Lambda(N)}\iint \sum_{i,j=1}^{n+1}b_{ij}u_{y_i}v^2\, (\eta_Q)_{y_j} \,dY,
\end{align*}
and by the same argument as in the estimate of $J_4$ we obtain $|J_2|\lesssim \sigma(Q_0)$. 

To conclude we collect the estimates for $J_2$ and $J_4$, and use use that $J_3$ is non-negative by \eqref{eq:ellipt-B} to obtain 
$
J_1(N)\lesssim \sigma(Q_0),
$
with constants independent of $N$. The proof of \eqref{eq6.13} in the general case $1<p<\infty$ is then complete.

\subsection{Proof of Lemma \ref{LVlemma}}\label{subs3} To prove Lemma \ref{LVlemma}, we will follow 
the corresponding argument in \cite{LV-2007} closely, but with some modifications due to the
fact that in contrast to the situation in \cite{LV-2007}, our solution $u$ need not be Lipschitz up to the boundary,
and our harmonic/$p$-harmonic measures need not be doubling. It is the latter obstacle that has forced us to
introduce the WHSA condition, rather than to work with the Weak Exterior Convexity condition
used in \cite{LV-2007}. Lemma \ref{LVlemma} is essentially a distillation of the main argument
of the corresponding part of \cite{LV-2007}, but with the doubling hypothesis removed.

In the remainder of this section, we will, for convenience, use the notational convention that implicit and generic constants are allowed to depend upon
$K_0$, but not on $\eps$ or $M$. 
Dependence on the latter will be stated explicitly. We first prove the following lemma and 
we recall that the balls $B_Y$ and $\tB_Y$ are defined in \eqref{eq5.1}.
\begin{lemma}\label{l5.14}  Let $Y\in U^i_Q$, $X\in \tU^i_Q$.  Suppose first that $w\in \partial\tB_Y\cap E$, and
let $W$ be the radial
projection of $w$ onto $\partial B_Y$.  Then
\begin{equation}\label{eq5.15a}
u(W) \lesssim \eps^{2M-5} \delta(Y)\,.
\end{equation}
If $w\in \partial\tB_X\cap E$, and $W$ now is the radial
projection of $w$ onto $\partial B_X$, then
\begin{equation}\label{eq5.16a}
u(W)
\lesssim
\eps^{2M-5} \ell(Q)\,.
\end{equation}
\end{lemma}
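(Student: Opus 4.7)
The plan is to apply the boundary H\"older decay estimate Lemma~\ref{cor2.12} at the boundary point $w\in E$, combined with the global sup bound \eqref{eq4.18}.

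For \eqref{eq5.15a}, I set $r:=\delta(Y)$. Since $w\in\partial\widetilde{B}_Y$ we have $|Y-w|=\delta(Y)=r$, and since $W$ is the radial projection of $w$ onto $\partial B_Y$ it lies on the segment from $Y$ to $w$ at distance $(1-\eps^{2M/\alpha})r$ from $Y$; hence $|W-w|=\eps^{2M/\alpha}\,r$ and in particular $\delta(W)\leq \eps^{2M/\alpha}\,r$. Using $Y\in U_Q^i$ (so $\delta(Y)\approx\ell(Q)$ and $|Y-x_Q|\lesssim\ell(Q)$, with $K_0$-dependent constants absorbed), I check that $|w-x_Q|\lesssim\ell(Q)$, and hence, for $\eps$ sufficiently small relative to $K_0$, both $B(w,2r)\subset B_Q^{***}$ and $B(w,4r)\subset \bqo$ hold. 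The second inclusion guarantees that $u$ is non-negative and $p$-harmonic in $\Omega\cap B(w,4r)$ and vanishes continuously on $E\cap B(w,4r)$, so Lemma~\ref{cor2.12} applies with boundary point $w$ and radius $r$, yielding
$$
u(W)\,\leq\, C\left(\frac{\delta(W)}{r}\right)^{\alpha}\left(\frac{1}{|B(w,2r)|}\iint_{B(w,2r)} u^{p-1}(Z)\,dZ\right)^{1/(p-1)}.
$$
The first factor is at most $\eps^{2M}$ by construction; for the second, the inclusion $B(w,2r)\subset B_Q^{***}$ together with \eqref{eq4.18} gives the crude but sufficient bound $u\lesssim \eps^{-5}\ell(Q)$ throughout $B(w,2r)$. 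Combining these, I get $u(W)\lesssim \eps^{2M-5}\ell(Q)\approx\eps^{2M-5}\delta(Y)$, which is \eqref{eq5.15a}.

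The proof of \eqref{eq5.16a} is identical in structure, the only change being $r:=\delta(X)$ in place of $\delta(Y)$. Here Definition~\ref{def2.11a} supplies a chain of at most $\eps^{-1}$ balls of radii in $[\eps^3\ell(Q),\eps^{-3}\ell(Q)]$ connecting $X$ to a point of $U_Q^i$, so $\delta(X)\lesssim \eps^{-3}\ell(Q)$ and $|X-x_Q|\lesssim \eps^{-4}\ell(Q)$; consequently $|w-x_Q|\lesssim \eps^{-4}\ell(Q)$, and for $\eps$ small the inclusions $B(w,2r)\subset B_Q^{***}$ and $B(w,4r)\subset \bqo$ remain valid. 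Applying Lemma~\ref{cor2.12} and \eqref{eq4.18} verbatim as above gives $u(W)\lesssim \eps^{2M}\cdot\eps^{-5}\ell(Q)=\eps^{2M-5}\ell(Q)$, which is \eqref{eq5.16a}.

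The proof is essentially mechanical once one notes that $w$ lies at a controlled distance from $x_Q$, so the only (minor) point requiring care is the verification of the geometric inclusions $B(w,2r)\subset B_Q^{***}$ and $B(w,4r)\subset \bqo$, which make the sup bound \eqref{eq4.18} and the standing hypotheses on $u$ simultaneously available at the boundary point $w$; no serious obstacle arises beyond these routine checks.
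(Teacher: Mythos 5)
Your proof is correct and follows essentially the same approach as the paper: both invoke the boundary H\"older decay of Lemma~\ref{cor2.12} together with the sup bound \eqref{eq4.18} on $B_Q^{***}$. The only cosmetic differences are that the paper proves \eqref{eq5.16a} first and deduces \eqref{eq5.15a} from it via $\delta(Y)\approx\ell(Q)$, and applies Lemma~\ref{cor2.12} at the single scale $r\approx\eps^{-5}\ell(Q)$ centered at $x_Q$, whereas you center at $w$ with $r=\delta(X)$; both yield $u(W)\lesssim\eps^{2M-5}\ell(Q)$ after the same routine geometric inclusions.
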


\begin{proof}  Since $K_0^{-1}\ell(Q)\lesssim \delta(Y)\lesssim K_0\, \ell(Q)$ for $Y\in U_Q^i$, it is enough to prove
\eqref{eq5.16a}. To prove \eqref{eq5.16a}, we first
note that
$$|W-w|\, =\, \eps^{2M/\alpha}\delta(X) \lesssim   \eps^{2M/\alpha}\eps^{-3}\ell(Q)\,,$$
by definition of $B_X,\tB_X$ and the fact that by construction of $\tU^i_Q$,
\begin{equation}\label{eq5.27a}
\eps^3\ell(Q)\lesssim \delta(X)
\lesssim \eps^{-3}\ell(Q)\,,\quad \forall \, X\in\tU_Q^i\,.
\end{equation}
In addition, again by construction of $\tU_Q^i$,
\begin{equation}\label{eq5.29a}
\diam(\tU_Q^i)\lesssim \eps^{-4}\ell(Q)\,.
\end{equation}
Consequently, $W\in \frac12 B_Q^{***}=B\big(x_Q, \frac12\eps^{-5}\ell(Q)\big)$,
so by Lemma \ref{cor2.12} and \eqref{eq4.18},
\begin{equation*}
u(W) \lesssim  \left(\frac{\eps^{2M/\alpha}\eps^{-3}\ell(Q)}{\eps^{-5}\ell(Q)}\right)^\alpha \,
\frac{1}{|B_Q^{***}|} \int\!\!\!\int_{B_Q^{***}}\, u \lesssim  
\eps^{2M+2\alpha -5}\ell(Q) \leq  \eps^{2M -5}\ell(Q)\,.
\end{equation*}
\end{proof}

\begin{claim}\label{claim6.14} Let $Y\in U^i_Q$.
For all $W\in  B_Y$,
\begin{equation}\label{eq6.15}
|u(W) -u(Y) -\nabla u(Y) \cdot(W-Y)|\lesssim \eps^{2M} \delta(Y)\,.
\end{equation}
\end{claim}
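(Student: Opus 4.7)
The plan is to derive \eqref{eq6.15} as a direct Taylor-type estimate, using the hypothesis \eqref{eq4.19} of Lemma \ref{LVlemma} to control the oscillation of $\nabla u$ on the ball $B_Y$. First I would observe that by the very definition \eqref{eq5.1}, $B_Y$ has radius strictly less than $\delta(Y)$, so $B_Y \subset \Omega$, and by Lemma \ref{lem2.4} the function $u$ is $C^{1,\beta}$ inside this ball. Moreover, $B_Y$ is convex, so for any $W\in B_Y$ the line segment $\{Y+t(W-Y): t\in[0,1]\}$ is entirely contained in $B_Y$.

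Next I would write, via the fundamental theorem of calculus along this segment,
\begin{equation*}
u(W)-u(Y)-\nabla u(Y)\cdot(W-Y) = \int_0^1\bigl[\nabla u(Y+t(W-Y)) - \nabla u(Y)\bigr]\cdot(W-Y)\,dt.
\end{equation*}
Taking absolute values and pulling the norm inside the integral reduces matters to controlling $|\nabla u(Z)-\nabla u(Y)|$ for $Z$ ranging over $B_Y$.

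To bound this oscillation, I would apply the hypothesis \eqref{eq4.19} of Lemma \ref{LVlemma} with the choice $X=Y\in U^i_Q\subset \tU^i_Q$, so that $Z_2:=Y$ lies in $B_X=B_Y$ (as the center) and $Z_1:=Y+t(W-Y)\in B_Y$. This yields $|\nabla u(Y+t(W-Y)) - \nabla u(Y)|\le 2\eps^{2M}$ uniformly in $t\in[0,1]$. Combined with the trivial bound $|W-Y|\le \delta(Y)$ coming from $W\in B_Y$, one obtains
\begin{equation*}
|u(W)-u(Y)-\nabla u(Y)\cdot(W-Y)|\le 2\eps^{2M}|W-Y|\le 2\eps^{2M}\delta(Y),
\end{equation*}
which is precisely \eqref{eq6.15}.

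There is essentially no obstacle here: the claim is a clean corollary of hypothesis \eqref{eq4.19} and the convexity of $B_Y$, once one notes that $Y$ itself lies in $B_Y$ so that $\nabla u(Y)$ appears as one of the allowable comparison values in \eqref{eq4.19}. The only minor point to check is the $C^1$-regularity of $u$ on $B_Y$, which is immediate from Lemma \ref{lem2.4} since $B_Y$ is a compact subset of $\Omega$ on which standard interior regularity for $p$-harmonic functions applies.
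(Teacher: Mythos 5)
Your proof is correct and follows essentially the same route as the paper's: both reduce the claim to bounding the oscillation of $\nabla u$ on $B_Y$ via hypothesis \eqref{eq4.19} with the choice $X=Y$, $Z_2=Y$. The only cosmetic difference is that you write the Taylor remainder in integral form (fundamental theorem of calculus), whereas the paper invokes the mean value theorem to produce a single point $\widetilde{W}\in B_Y$ with $u(W)-u(Y)=\nabla u(\widetilde{W})\cdot(W-Y)$.
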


\begin{proof}[Proof of Claim \ref{claim6.14}]
Let $W\in B_Y$.  Then for some $\widetilde{W}\in B_Y$,
$$
u(W) -u(Y)
=
\nabla u (\widetilde{W})\cdot (W-Y)\,.
$$
We may then invoke \eqref{eq4.19}, with $X=Y$, $Z_1 = \widetilde{W}$, and $Z_2 = Y$,
to obtain \eqref{eq6.15}.
\end{proof}

\begin{claim}\label{claim6.16}  Let $Y\in U^i_Q$.  Suppose that  $w\in \partial\tB_Y\cap E$. 
Then
\begin{equation}\label{eq6.17}
|u(Y) -\nabla u(Y) \cdot(Y-w)|=|u(w)-u(Y) -\nabla u(Y) \cdot(w-Y)|
\lesssim
\eps^{2M-5} \delta(Y)\,.
\end{equation}
\end{claim}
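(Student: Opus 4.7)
The plan is to reduce Claim \ref{claim6.16} to the already-established Claim \ref{claim6.14} and Lemma \ref{l5.14}, by replacing the boundary point $w$ with its radial projection $W$ onto $\partial B_Y$, where both the pointwise Taylor bound of Claim \ref{claim6.14} and the smallness estimate for $u(W)$ from Lemma \ref{l5.14} are directly available.

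First I would observe that the two expressions inside the absolute values are negatives of each other because $w\in E=\partial\Omega$ forces $u(w)=0$ (recall $u$ vanishes continuously on $\dqo$ by the hypothesis of Lemma \ref{LVlemma}); this immediately yields the first equality. Next, I let $W$ denote the radial projection of $w$ onto $\partial B_Y$, so that $W-Y$ and $w-Y$ are parallel, with $|w-Y|=\delta(Y)$ and $|W-Y|=(1-\eps^{2M/\alpha})\delta(Y)$; in particular $W-Y=(1-\eps^{2M/\alpha})(w-Y)$. Applying Claim \ref{claim6.14} at the point $W\in \overline{B_Y}$ and using that $u(w)=0$ together with Lemma \ref{l5.14} to bound $u(W)\lesssim \eps^{2M-5}\delta(Y)$, I obtain
\begin{equation*}
|u(Y)+\nabla u(Y)\cdot(W-Y)|\,\le\,|u(W)|\,+\,|u(W)-u(Y)-\nabla u(Y)\cdot(W-Y)|\,\lesssim\,\eps^{2M-5}\delta(Y).
\end{equation*}

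To finish, I would estimate the discrepancy between using $W-Y$ and $w-Y$ in the linear term. Using $W-Y=(1-\eps^{2M/\alpha})(w-Y)$, the gradient bound $|\nabla u(Y)|\lesssim 1$ (which follows from \eqref{grad_est} and \eqref{eq4.10a} applied in $U_Q^{i,*}$), and $|w-Y|=\delta(Y)$, one gets
\begin{equation*}
\bigl|\nabla u(Y)\cdot(w-Y)-\nabla u(Y)\cdot(W-Y)\bigr|\,=\,\eps^{2M/\alpha}\bigl|\nabla u(Y)\cdot(w-Y)\bigr|\,\lesssim\,\eps^{2M/\alpha}\delta(Y).
\end{equation*}
Since $0<\alpha<1$, the exponent $2M/\alpha$ exceeds $2M-5$, so combining the two displays via the triangle inequality yields $|u(Y)-\nabla u(Y)\cdot(Y-w)|\lesssim \eps^{2M-5}\delta(Y)$, as claimed.

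There is no real obstacle here; the only point requiring slight care is keeping track of the Taylor-type error (of order $\eps^{2M}\delta(Y)$ from Claim \ref{claim6.14}) against the larger boundary-value error (of order $\eps^{2M-5}\delta(Y)$ from Lemma \ref{l5.14}, which loses five powers of $\eps$ because one must pass from the ball $B_Q^{***}$ of radius $\eps^{-5}\ell(Q)$ down to the scale $\eps^{2M/\alpha}\delta(Y)$ using the H\"older estimate of Lemma \ref{cor2.12}). The worst of these two errors dominates, giving the stated bound.
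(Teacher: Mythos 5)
Your argument is correct and follows the same route as the paper: radially project $w$ to $W\in\partial B_Y$, invoke Claim \ref{claim6.14} at $W$ together with the bound $u(W)\lesssim\eps^{2M-5}\delta(Y)$ from Lemma \ref{l5.14}, and then absorb the small discrepancy between the linear terms in $W-Y$ and $w-Y$ via a gradient bound. One small remark on your last step: to justify $|\nabla u(Y)|\lesssim 1$ you cite \eqref{grad_est} together with \eqref{eq4.10a}, but the constant in \eqref{eq4.10a} is allowed to depend on $\eps$ and $M$, whereas the convention in this section requires implicit constants to depend only on $K_0$; the paper instead obtains $|\nabla u(Y)|\lesssim 1$ directly from \eqref{eq4.17a} and \eqref{eq4.19} (which is what yields \eqref{eq6.5aa}), giving a $K_0$-dependent constant. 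Since the discrepancy term carries the tiny factor $\eps^{2M/\alpha}$, the $\eps$-dependent constant is in fact harmlessly absorbed, but the cleaner reference avoids having to make that argument.
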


\begin{proof}[Proof of Claim \ref{claim6.16}]
Given $w\in \partial\tB_Y\cap E$, let $W$ be the radial
projection of $w$ onto $\partial B_Y$, so that $|W-w| = \eps^{2M/\alpha}\delta(Y)$.
Since $u(w)=0$, by \eqref{eq5.15a}
we have
$$
|u(W)-u(w)| = u(W)
\lesssim
\eps^{2M-5} \delta(Y).
$$
Since \eqref{eq6.15} holds
for $W$, we obtain \eqref{eq6.17}  by \eqref{eq4.17a} and \eqref{eq4.19}. 
\end{proof}

To simplify notation, we now set $Y:=Y_Q$, the point in $U_Q^i$ satisfying
\eqref{eq4.17a}.  By \eqref{eq4.17a} and  \eqref{eq4.19}, for $\eps<1/2$, and $M$ chosen large enough, we
have that
\begin{equation}\label{eq6.5aa}
|\nabla u(Z)| \approx 1\,,\qquad \forall\, Z\in \tU^i_Q\,.
\end{equation}
By translation and rotation, we assume that $0\in \partial\tB_Y\cap E$, and that
$Y =\delta(Y) e_{n+1}$, where as usual $e_{n+1}:=(0,\dots,0,1)$.

\begin{claim}\label{claim6.19}  We claim that
\begin{equation}\label{eq6.20}
\big|\, \nabla u(Y)\cdot e_{n+1} -|\nabla u(Y)|\,\big|\lesssim \eps^{2M-5}\,.
\end{equation}
\end{claim}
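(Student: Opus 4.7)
The plan is to combine Claim \ref{claim6.16} (applied at the boundary point $w=0$) with a one-sided lower bound on $u(Y)$ that comes from the non-negativity of $u$ together with Claim \ref{claim6.14}. The key is that $0$ realizes the distance from $Y$ to $E$, so $Y-0 = \delta(Y) e_{n+1}$ is a first-order witness in the $e_{n+1}$-direction, while testing in the opposite direction $-\nabla u(Y)/|\nabla u(Y)|$ yields a control in terms of $|\nabla u(Y)|$.

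First I would invoke Claim \ref{claim6.16} with $w=0 \in \partial\widetilde B_Y \cap E$. Since $Y = \delta(Y)e_{n+1}$, this reads
\begin{equation}\label{eq:plan-upper}
\big| u(Y) - \delta(Y)\, \nabla u(Y)\cdot e_{n+1}\big| \,\lesssim\, \eps^{2M-5}\,\delta(Y).
\end{equation}
Note also, by \eqref{eq6.5aa}, $|\nabla u(Y)|\approx 1$, so $\nabla u(Y)/|\nabla u(Y)|$ is well defined.

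Next I would build the lower bound. Set $\xi_0 := -\nabla u(Y)/|\nabla u(Y)|$ and
\[
W^* := Y + (1-\eps^{2M/\alpha})\,\delta(Y)\,\xi_0,
\]
so that $W^*\in \partial B_Y \subset B_Y$ and in particular $|W^*-Y|<\delta(Y)$, hence $W^*\in \Omega$ and $u(W^*)\ge 0$. Applying Claim \ref{claim6.14} to $W^*$ gives, using $\nabla u(Y)\cdot\xi_0 = -|\nabla u(Y)|$,
\[
u(W^*) \,=\, u(Y) \,-\, (1-\eps^{2M/\alpha})\,\delta(Y)\,|\nabla u(Y)| \,+\, O\!\left(\eps^{2M}\,\delta(Y)\right).
\]
Using $u(W^*)\ge 0$, and absorbing the $\eps^{2M/\alpha}$-error (which is bounded by $\eps^{2M}$ since $\alpha<1$) together with $|\nabla u(Y)|\lesssim 1$, I obtain
\begin{equation}\label{eq:plan-lower}
\delta(Y)\,|\nabla u(Y)| \,\le\, u(Y) \,+\, O\!\left(\eps^{2M-5}\,\delta(Y)\right).
\end{equation}

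Finally I would combine \eqref{eq:plan-upper} and \eqref{eq:plan-lower}, divide by $\delta(Y)$, and conclude
\[
|\nabla u(Y)| \,\le\, \nabla u(Y)\cdot e_{n+1} \,+\, O(\eps^{2M-5}),
\]
while the opposite inequality $\nabla u(Y)\cdot e_{n+1} \le |\nabla u(Y)|$ is Cauchy--Schwarz. This yields \eqref{eq6.20}.

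There is no real obstacle here; the only bookkeeping points are (i) checking that $W^*$ lies both in $B_Y$ (so Claim \ref{claim6.14} applies) and in $\Omega$ (so $u(W^*)\ge 0$), and (ii) verifying that the extraneous error $\eps^{2M/\alpha}$ is dominated by $\eps^{2M-5}$, which is straightforward for $M$ large since $\alpha\in(0,1)$ and $\eps<1$. As a byproduct the argument also confirms that $\nabla u(Y)\cdot e_{n+1}>0$: if it were not, \eqref{eq:plan-lower} and \eqref{eq:plan-upper} would force $|\nabla u(Y)|\lesssim \eps^{2M-5}$, contradicting \eqref{eq6.5aa}.
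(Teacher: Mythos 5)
Your proposal is correct and follows essentially the same route as the paper: both apply Claim \ref{claim6.16} at $w=0$, test Claim \ref{claim6.14} at the boundary point of $B_Y$ in the direction $-\nabla u(Y)/|\nabla u(Y)|$, and exploit $u\ge 0$ to produce the one-sided inequality, with Cauchy--Schwarz giving the other side. The only cosmetic difference is that the paper first merges Claims \ref{claim6.14} and \ref{claim6.16} into the single estimate \eqref{eq6.21} before substituting the extremal $W$, whereas you keep the two steps separate; the content is identical.
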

\begin{proof}[Proof of Claim \ref{claim6.19}]
We apply \eqref{eq6.17}, with $w=0$, to obtain
$$
|u(Y) -\nabla u(Y)\cdot Y|
\lesssim
\eps^{2M-5}\delta(Y).
$$
Combining the latter bound with \eqref{eq6.15}, we find that
\begin{multline}\label{eq6.21}
|u(W)  -\nabla u(Y)\cdot W|=
|u(W) -\nabla u(Y)\cdot Y -\nabla u(Y)\cdot (W-Y)|\\
\lesssim
\eps^{2M-5}\delta(Y)\,,\quad \forall\, W\in B_Y\,.
\end{multline}
Fix $W\in \partial B_Y$ so that
$\nabla u(Y)\cdot\frac{W-Y}{|W-Y|} = -|\nabla u(Y)|$.
Since $|W-Y|= (1-\eps^{2M/\alpha})\delta(Y)$, and since $u\geq 0$, we have
\begin{align}\label{eq6.22}
&0\leq |\nabla u(Y)|-\nabla u(Y)\cdot e_{n+1}
\leq |\nabla u(Y)|-\nabla u(Y)\cdot e_{n+1} +\frac{u(W)}{\delta(Y)}\\
&\qquad\qquad\qquad\leq \frac1{\delta(Y)}\left(-\nabla u(Y) 
\cdot \frac{(W-Y)}{1-\eps^{2M/\alpha}}\, -\,\nabla u(Y)\cdot Y \,+\,u(W) \right)
\nonumber
\\
&\qquad\qquad\qquad\lesssim
\left(\eps^{2M-5} +\eps^{2M/\alpha}\right)
\approx
\eps^{2M-5}\,,\nonumber
\end{align}
by \eqref{eq6.21} and \eqref{eq4.17a}.
\end{proof}

\begin{claim}\label{claim6.24}  Suppose that $M>5$.  Then
\begin{equation}\label{eq6.25}
\big|\, |\nabla u(Y)|e_{n+1}-\nabla u(Y)\,\big|
\lesssim
\eps^{M-3}\,.
\end{equation}
\end{claim}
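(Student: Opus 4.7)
The plan is to deduce Claim~\ref{claim6.24} as a purely algebraic consequence of Claim~\ref{claim6.19}, exploiting the elementary geometric fact that a vector whose inner product with a unit vector nearly equals its own length must itself be nearly parallel to that unit vector.

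To this end, I would set $a := |\nabla u(Y)|$, which by \eqref{eq4.17a} satisfies $a\approx 1$, and expand the square of the quantity to be estimated directly:
\begin{equation*}
\big|\,a\,e_{n+1}-\nabla u(Y)\,\big|^{2}
\,=\, a^{2}\,-\,2a\,\big(\nabla u(Y)\cdot e_{n+1}\big)\,+\,|\nabla u(Y)|^{2}
\,=\, 2a\,\big(a-\nabla u(Y)\cdot e_{n+1}\big).
\end{equation*}
The quantity $a-\nabla u(Y)\cdot e_{n+1}$ is nonnegative (as already noted at the start of \eqref{eq6.22}, since $u(W)\geq 0$ for the specific $W$ chosen there, or equivalently by Cauchy--Schwarz), and Claim~\ref{claim6.19}, i.e.\ estimate \eqref{eq6.20}, bounds it by $C\,\eps^{2M-5}$. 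Since $a\approx 1$, we conclude
\begin{equation*}
\big|\,a\,e_{n+1}-\nabla u(Y)\,\big|^{2}\,\lesssim\,\eps^{2M-5},
\end{equation*}
and taking square roots yields $\big|\,|\nabla u(Y)|\,e_{n+1}-\nabla u(Y)\,\big|\lesssim \eps^{M-5/2}$, which is stronger than the stated $\eps^{M-3}$ for every $\eps\in(0,1)$.

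I do not anticipate any real obstacle: all of the analytic content is already packaged into Claim~\ref{claim6.19}, and the present step is a one-line Pythagorean identity. The hypothesis $M>5$ plays no role in the derivation itself; it is used only downstream, to guarantee that $\eps^{M-3}$ is genuinely small and hence that the conclusion of Claim~\ref{claim6.24} constitutes useful geometric information (namely, that $\nabla u(Y)$ points in an almost exactly normal direction to the candidate approximating hyperplane through the origin).
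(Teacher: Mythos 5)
Your proof is correct and is essentially the same argument as the paper's: both reduce Claim~\ref{claim6.24} to Claim~\ref{claim6.19} together with $|\nabla u(Y)|\approx 1$, the only cosmetic difference being that the paper first splits into components parallel and perpendicular to $e_{n+1}$ and then factors $|\nabla_\| u|^2 = (|\nabla u|-\nabla u\cdot e_{n+1})(|\nabla u|+\nabla u\cdot e_{n+1})$, while you compute $|a\,e_{n+1}-\nabla u(Y)|^2 = 2a(a-\nabla u(Y)\cdot e_{n+1})$ in one step. Both routes yield the sharper exponent $\eps^{M-5/2}$, of which the stated $\eps^{M-3}$ is a weakening.
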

\begin{proof}[Proof of Claim \ref{claim6.24}]
By Claim \ref{claim6.19},
$$
\big|\, |\nabla u(Y)|e_{n+1}-(\nabla u(Y)\cdot e_{n+1}) e_{n+1}\,\big| \lesssim 
\eps^{2M-5}\,.
$$
Therefore,
it is enough to consider
$\nabla_\| u:= \nabla u - (\nabla u(Y)\cdot e_{n+1}) e_{n+1}$.
Observe that
\begin{multline*}
|\nabla_\| u(Y)|^2
=
|\nabla u(Y)|^2 -\big(\nabla u(Y)\cdot e_{n+1}\big)^2\\
=
\big(|\nabla u(Y)|-\nabla u(Y)\cdot e_{n+1}\big)\,
\big(|\nabla u(Y)|+\nabla u(Y)\cdot e_{n+1}\big)
\lesssim
\eps^{2M-5}\,,
\end{multline*}
by \eqref{eq6.20} and \eqref{eq4.17a}. \end{proof}

Now for $Y=\delta(Y)e_{n+1}\in U_{Q}^i$ fixed as above, we consider another point
$X\in\tU_Q^i$. By definition of $\tU_Q^i$,
there is a polygonal path in $\tU^i_Q$, joining $Y$ to $X$, with vertices
$$Y_0:=Y, Y_1,Y_2,\dots,Y_N:= X\,,\qquad N\lesssim \eps^{-4}\,,$$
such that 
$Y_{k+1}\in B_{Y_k}\cap B(Y_k,\ell(Q)),\, 0\leq k\leq N-1$, and such that
the distance between consecutive vertices is at most  $C\ell(Q)$. Indeed,
by definition of $\tU_Q^i$, we may connect $Y$ to $X$ by a polygonal 
path connecting the centers of at most $\eps^{-1}$ 
balls, such that the distance between consecutive vertices is 
between $\eps^3\,\ell(Q)/2$ and $\eps^{-3}\,\ell(Q)/2$. If any such distance is greater than $\ell(Q)$, we 
take at most $C\eps^{-3}$ intermediate vertices 
with distances on the order of $\ell(Q)$. The total length of the path is thus
on the order of $N\ell(Q)$  with $N\lesssim \eps^{-4}$. 
Furthermore, by \eqref{eq4.19} and \eqref{eq6.25},
\begin{multline}\label{eq6.26}
\big|\nabla u(W) -|\nabla u(Y)|e_{n+1}\big|
\\[4pt]
 \leq |\nabla u(W)-\nabla u(Y)| + \big|\nabla u(Y) -|\nabla u(Y)|e_{n+1}\big|
\\[4pt]
\lesssim
\eps^{2M}+\eps^{M-3}
\lesssim
\eps^{M-3}\,,\quad \forall\,W\in B_Z\,,\,\,\forall Z\in \tU^i_Q\,.
\end{multline}

\begin{claim}\label{claim6.27} Assume $M>7$.  Then for each $k=1,2,\dots,N$,
\begin{equation}\label{eq6.28}
\big|u(Y_{k})-|\nabla u(Y)|Y_k\cdot e_{n+1}\big|  \lesssim   k\,\eps^{M-3}\ell(Q)\,.
\end{equation}
Moreover,
\begin{equation}\label{eq6.28a}
\big|u(W)-|\nabla u(Y)|W_{n+1}\big|  \lesssim  \eps^{M-7}\ell(Q)\,,\quad \forall \, W\in B_{X}\,,\, \forall \,X\in\tU^i_Q\,.
\end{equation}
\end{claim}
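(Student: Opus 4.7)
The plan is to prove \eqref{eq6.28} by a telescoping argument along the polygonal path $Y_0,Y_1,\ldots,Y_N$, using \eqref{eq6.26} to control the error at each step, and then to obtain \eqref{eq6.28a} by applying one additional mean value argument to pass from the vertex $X=Y_N$ to an arbitrary point $W\in B_X$. The guiding heuristic is that, because $\nabla u$ is within $\eps^{M-3}$ of the constant vector $|\nabla u(Y)|\,e_{n+1}$ throughout $\tU^i_Q$, the function $u$ is well approximated by the linear function $|\nabla u(Y)|\,x_{n+1}$ on the chain, up to an error that accumulates proportionally to the number of steps.

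For the base case $k=0$, I would combine \eqref{eq6.17} (with $w=0$, which is permissible since $0\in\partial\tB_Y\cap E$ by construction) with \eqref{eq6.20}, using $Y=\delta(Y)e_{n+1}$, to obtain
\begin{equation*}
\big|u(Y)-|\nabla u(Y)|\,Y\cdot e_{n+1}\big|\,\lesssim\,\eps^{2M-5}\delta(Y)\,\lesssim\,\eps^{2M-5}\ell(Q)\,,
\end{equation*}
which is certainly bounded by $\eps^{M-3}\ell(Q)$ for $M\geq 2$. For the inductive step, since $Y_{k+1}\in B_{Y_k}$ and $B_{Y_k}$ is convex, the segment from $Y_k$ to $Y_{k+1}$ lies in $B_{Y_k}$. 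By the mean value theorem, there is $\widetilde{W}_k$ on this segment with $u(Y_{k+1})-u(Y_k)=\nabla u(\widetilde{W}_k)\cdot(Y_{k+1}-Y_k)$. Since $Y_k\in \tU^i_Q$ and $\widetilde{W}_k\in B_{Y_k}$, \eqref{eq6.26} gives
\begin{equation*}
\big|\nabla u(\widetilde{W}_k)-|\nabla u(Y)|\,e_{n+1}\big|\,\lesssim\,\eps^{M-3}\,.
\end{equation*}
Combined with $|Y_{k+1}-Y_k|\leq\ell(Q)$, this yields a one-step error bound
\begin{equation*}
\big|\big(u(Y_{k+1})-u(Y_k)\big)-|\nabla u(Y)|(Y_{k+1}-Y_k)\cdot e_{n+1}\big|\,\lesssim\,\eps^{M-3}\ell(Q)\,,
\end{equation*}
and summing telescopically against the base case establishes \eqref{eq6.28}.

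For \eqref{eq6.28a}, I would first apply \eqref{eq6.28} with $k=N\lesssim\eps^{-4}$ to obtain
\begin{equation*}
\big|u(X)-|\nabla u(Y)|\,X\cdot e_{n+1}\big|\,\lesssim\,\eps^{-4}\cdot\eps^{M-3}\ell(Q)\,=\,\eps^{M-7}\ell(Q)\,.
\end{equation*}
Then, for $W\in B_X$, applying the mean value theorem along the segment from $X$ to $W$ (which lies in $B_X$ by convexity) together with \eqref{eq6.26} at the intermediate point, and using $|W-X|\leq\delta(X)\lesssim\eps^{-3}\ell(Q)$ from \eqref{eq5.27a}, I get
\begin{equation*}
\big|\big(u(W)-u(X)\big)-|\nabla u(Y)|(W-X)\cdot e_{n+1}\big|\,\lesssim\,\eps^{M-3}\cdot\eps^{-3}\ell(Q)\,=\,\eps^{M-6}\ell(Q)\,.
\end{equation*}
Adding the two estimates via the triangle inequality yields \eqref{eq6.28a}. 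The only subtlety, and the step requiring a little care, is verifying that $\widetilde{W}_k$ indeed lies in $B_{Y_k}$ (so that \eqref{eq6.26} is applicable); this is immediate from convexity of $B_{Y_k}$ and the hypothesis $Y_{k+1}\in B_{Y_k}$ built into the construction of the polygonal chain. There are no substantial obstacles beyond careful bookkeeping of the exponents of $\eps$, which is why the hypothesis $M>7$ is imposed (ensuring $\eps^{2M-5}\leq\eps^{M-3}$ and the final $\eps^{M-7}$ loss remains meaningful).
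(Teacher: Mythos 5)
Your proposal is correct and follows essentially the same path as the paper's proof: both establish a base estimate at $Y$ (using \eqref{eq6.17} with $w=0$ and \eqref{eq6.20}/\eqref{eq6.25}), then propagate it along the chain $Y_0,\dots,Y_N$ via the mean value theorem and \eqref{eq6.26}, incurring an $O(\eps^{M-3}\ell(Q))$ error per step, and finally pass from $X=Y_N$ to an arbitrary $W\in B_X$ using the longer segment length $|W-X|\lesssim\eps^{-3}\ell(Q)$. The only difference is organizational — the paper folds the ``$W\in B_{Y_k}$'' case into the inductive step rather than treating it as a final extra step — and your exponent bookkeeping matches the paper's.
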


\begin{proof}[Proof of Claim \ref{claim6.27}]
By \eqref{eq6.21} and \eqref{eq6.25}, we have
\begin{align}\label{eq6.29}
\big| u(W) -|\nabla u(Y)|W_{n+1}\big|&\lesssim 
|u(W)  -\nabla u(Y)\cdot W|
\\
&\hskip1.5cm
+\big|\Big(\nabla u(Y) -|\nabla u(Y)|e_{n+1}\Big)\cdot W\big|\nonumber\\
&\lesssim \eps^{2M-5}\delta(Y) +\eps^{M-3} |W|
 \lesssim   \eps^{M-3}\ell(Q) \,, \quad \forall\,W\in B_Y\,,\nonumber
\end{align}
since $\delta(Z) \approx \ell(Q),$ for all $Z\in U^i_Q$ (so in particular, for $Z=Y$), and since
$|W|\leq 2\delta(Y) \lesssim \ell(Q)$, for all $W\in B_Y$.
Thus, \eqref{eq6.28} holds with $k=1$, since $Y_1\in B_Y$, by construction. Now suppose that \eqref{eq6.28} holds for
all $1\leq i\leq k$, with $k\leq N$.   
Let $W\in B_{Y_k}$, so that
$W$ may be joined to $Y_k$ by a line segment of length less than $\delta(Y_{k})
\lesssim \eps^{-3}\ell(Q)$ (the latter bound holds by \eqref{eq5.27a}).
We note also  that if $k\leq N-1$, and
if $W=Y_{k+1}$, then this line segment has length at most $\ell(Q)$, by construction.
Then
\begin{multline*}
\big|u(W)-|\nabla u(Y)|W_{n+1}\big|\\
\leq |u(W)-u(Y_{k})
+|\nabla u(Y)|(Y_{k}-W)\cdot e_{n+1}\big|\,+\,
\big|u(Y_{k})  -|\nabla u(Y)|Y_{k}\cdot e_{n+1}\big|
\\
= \big|(W-Y_{k})\cdot \nabla u(W_1) +|\nabla u(Y)|(Y_{k}-W)\cdot e_{n+1}\big|
\,+\, O\left(k\,\eps^{M-3} \ell(Q)\right)\,,
\end{multline*}
where $W_1$ is an appropriate point on the line segment joining $W$ and $Y_k$, and
where we have used that $Y_{k}$ satisfies \eqref{eq6.28}.
By \eqref{eq6.26}, applied to $W_1$, we find in turn that
\begin{equation}\label{eq6.30}
\big|u(W)-|\nabla u(Y)|W_{n+1}\big|
\lesssim
\eps^{M-3}\, |W-Y_k|
+
k\, \eps^{M-3} \ell(Q)\,,
\end{equation}
which, by our previous observations, is bounded by $C(k+1)\eps^{M-3} \ell(Q)$, if $W=Y_{k+1}$,
or by $(\eps^{M-6} +\,k\,\eps^{M-3} )\ell(Q),$ in general.  In the former case, we find that
\eqref{eq6.28} holds for all $k=1,2,\dots,N$, and in the latter case, taking $k=N\lesssim
\eps^{-4}$, we obtain \eqref{eq6.28a}.
\bigskip
\end{proof}

\begin{claim}\label{claim6.32} Let $X\in \tU^i_Q$, and
let $w\in E\cap \partial\tB_X$.  Then
\begin{equation}\label{eq6.33}
|\nabla u(Y)|\, |w_{n+1}|
\lesssim
\eps^{M/2}\ell(Q)\,.
\end{equation}
\end{claim}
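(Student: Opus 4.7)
The plan is to combine Lemma \ref{l5.14} with the conclusion \eqref{eq6.28a} of Claim \ref{claim6.27}, and then transfer the resulting bound from the ``surrogate'' point $W$ on $\partial B_X$ back to $w$ itself by a short triangle inequality argument. Concretely, given $w \in E\cap\partial\tB_X$, I would let $W$ denote the radial projection of $w$ onto $\partial B_X$, so that by the definitions \eqref{eq5.1} and the bound \eqref{eq5.27a},
$$|W-w| \,=\, \eps^{2M/\alpha}\,\delta(X) \,\lesssim\, \eps^{2M/\alpha - 3}\,\ell(Q).$$

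Next, since $u(w)=0$ and $w\in E$, estimate \eqref{eq5.16a} of Lemma \ref{l5.14} yields $u(W)\lesssim \eps^{2M-5}\ell(Q)$. Since $W\in B_X$ with $X\in\tU^i_Q$, \eqref{eq6.28a} gives $\bigl|u(W) - |\nabla u(Y)|\,W_{n+1}\bigr|\lesssim \eps^{M-7}\ell(Q)$. Combining these two estimates via the triangle inequality produces
$$|\nabla u(Y)|\,|W_{n+1}| \,\leq\, u(W) \,+\, \bigl|u(W) - |\nabla u(Y)|\,W_{n+1}\bigr| \,\lesssim\, \eps^{M-7}\,\ell(Q).$$

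Finally, I would pass from $W$ to $w$ via $|w_{n+1}|\leq |W_{n+1}| + |W-w|$, using $|\nabla u(Y)|\lesssim 1$ (from \eqref{eq4.17a}) and the bound on $|W-w|$ above, to conclude
$$|\nabla u(Y)|\,|w_{n+1}| \,\lesssim\, \eps^{M-7}\,\ell(Q) \,+\, \eps^{2M/\alpha - 3}\,\ell(Q).$$
Since $\alpha\in(0,1)$ is the exponent from Lemma \ref{cor2.12}, depending only on $p$, $n$, and the ADR constant, choosing $M$ sufficiently large (so that both $M-7\geq M/2$ and $2M/\alpha - 3\geq M/2$, the former being the binding constraint) yields $|\nabla u(Y)|\,|w_{n+1}|\lesssim \eps^{M/2}\ell(Q)$, as required. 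I do not expect any genuine obstacle here: the two key ingredients \eqref{eq5.16a} and \eqref{eq6.28a} are already in hand, and the argument is essentially bookkeeping, with the small vertical perturbation $|W-w|$ absorbed into the final error term.
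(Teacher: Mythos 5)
Your proof is correct and takes essentially the same route as the paper: radially project $w$ onto $\partial B_X$ to obtain $W$, bound $u(W)$ via Lemma \ref{l5.14} (\eqref{eq5.16a}), bound $\bigl|u(W)-|\nabla u(Y)|W_{n+1}\bigr|$ via \eqref{eq6.28a}, and absorb the $|W-w|$ discrepancy using $|\nabla u(Y)|\lesssim 1$ from \eqref{eq4.17a}. The paper organizes the three contributions into a single triangle inequality $|\nabla u(Y)||w_{n+1}|\le |\nabla u(Y)||W-w|+\bigl|u(W)-|\nabla u(Y)|W_{n+1}\bigr|+u(W)$, whereas you bound $|\nabla u(Y)||W_{n+1}|$ first and then pass from $W$ to $w$, but this is purely cosmetic; the ingredients, the exponents, and the final requirement that $M$ be large enough (with $M-7\ge M/2$ as the binding constraint since $\alpha\in(0,1)$) all agree.
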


\begin{proof}[Proof of Claim \ref{claim6.32}]
Let $W$ be the radial projection of $w$ onto $\partial B_X$, so that
\begin{equation}\label{eq6.34}
|W-w| = \eps^{2M/\alpha}\delta(X) \lesssim \eps^{(2M/\alpha) -3}\ell(Q)
\,,\end{equation}  
by \eqref{eq5.27a}.
We write
\begin{multline*}
|\nabla u(Y)|\, |w_{n+1}| 
\leq
 |\nabla u(Y)|\,|W-w|\,+\,
\big|u(W) -|\nabla u(Y)|W_{n+1}\big| \,+\,
u(W)
\\
=:
I+II+u(W).
\end{multline*}
Note that $I\lesssim \eps^{(2M/\alpha)-3}\ell(Q)$, by \eqref{eq6.34} and \eqref{eq4.17a} (recall that $Y=Y_Q$),
and that $II \lesssim \eps^{M-7}\ell(Q)$, by \eqref{eq6.28a}.
Furthermore, $u(W) \lesssim \eps^{2M-5} \ell(Q)$, by \eqref{eq5.16a}.
For $M$ chosen large enough, we obtain \eqref{eq6.33}.
\end{proof}

We note that since we have fixed $Y=Y_Q$,
 it then follows from \eqref{eq6.33} and \eqref{eq4.17a} that
\begin{equation}\label{eq5.35}
|w_{n+1}| \lesssim  \eps^{M/2} \ell(Q)\,,\quad \forall\, w\in E\cap\partial\tB_X\,,\quad \forall X\in \tU^i_Q\,.
\end{equation}
Recall that $x_Q$ denotes the ``center" of $Q$ (see \eqref{cube-ball}-\eqref{cube-ball2}).
Set
\begin{equation}\label{eq5.41a}
\mathcal{O}:= B\left(x_Q,2\eps^{-2}\ell(Q)\right) \cap \left\{W:W_{n+1}> \eps^2 \ell(Q)\right\}\,.
\end{equation}

\begin{claim}\label{claim5.40} For every point  $X\in \oo$, we have
$X\approx_{\eps,Q} Y$ (see Definition \ref{def2.11a}).
Thus, in particular, $\oo\subset \tU^i_Q.$
\end{claim}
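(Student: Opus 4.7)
The plan is to establish Claim \ref{claim5.40} by producing, for an arbitrary $X \in \mathcal{O}$, an explicit chain of balls realizing $X \approx_{\eps,Q} Y$ in the sense of Definition \ref{def2.11a}; the inclusion $\mathcal{O} \subset \tU^i_Q$ then follows directly. First I would verify that $Y$ itself lies in $\mathcal{O}$. Since $Y \in U_Q^i$, one has $\delta(Y) \approx K_0^{-1}\ell(Q)$, and since $0 \in E$ with $|Y| = \delta(Y)$, the center $x_Q$ (which lies in $E$ within $CK_0\ell(Q)$ of $Y$) satisfies $|x_Q| \lesssim K_0 \ell(Q)$. For $\eps$ small in terms of $K_0$ this gives $Y_{n+1} = \delta(Y) > \eps^2\ell(Q)$ and $|Y - x_Q| < 2\eps^{-2}\ell(Q)$, so $Y \in \mathcal{O}$.

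The geometric core of the argument is a lower bound $\delta(Z) \gtrsim Z_{n+1}$ along the chain, bootstrapped from \eqref{eq5.35}. If $Z$ is already known to lie in $\tU^i_Q$ and has foot $\widehat Z \in E \cap \partial \tB_Z$, then \eqref{eq5.35} yields $|\widehat Z_{n+1}| \lesssim \eps^{M/2}\ell(Q)$, so $\delta(Z) = |Z - \widehat Z| \geq Z_{n+1} - |\widehat Z_{n+1}| \geq Z_{n+1} - C\eps^{M/2}\ell(Q)$; for $Z \in \mathcal{O}$, since $Z_{n+1} > \eps^2\ell(Q) \gg \eps^{M/2}\ell(Q)$ when $M$ is large, we obtain $\delta(Z) \gtrsim Z_{n+1}$. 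This estimate is applied inductively: a newly appended vertex $Y_{j+1} \in B(Y_j, \delta(Y_j)/2)$ joins $\tU^i_Q$ at the moment it is added to an existing valid chain, so \eqref{eq5.35} becomes available at $Y_{j+1}$ and enables the next extension.

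For the chain itself, a straight-line traversal from $Y$ to $X$ would not suffice, since $\delta$ can be as small as $\eps^2\ell(Q)$ while the distance is as large as $\eps^{-2}\ell(Q)$, forcing roughly $\eps^{-4}$ balls. Instead I use an altitude-first path: starting from $Y_0 := Y$, set $Y_{j+1} := Y_j + \tfrac12\delta(Y_j)\,e_{n+1}$, so that by the flatness bound $\delta(Y_{j+1}) \approx (Y_{j+1})_{n+1} \approx \tfrac32 \delta(Y_j)$; after $O(\log \eps^{-1})$ such halving steps one reaches a height $\sim \eps^{-2}\ell(Q)$ with $\delta$ of the same order. From this high vertex $Y^\sharp$, a bounded number of horizontal steps of radius $\sim \delta(Y^\sharp)/2$ reach a point directly above $X$ at the same altitude, after which a symmetric descending sequence of $O(\log \eps^{-1})$ halving steps lands at $X$. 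The total chain has length $O(\log \eps^{-1}) \ll \eps^{-1}$, every vertex stays inside $B(x_Q, 3\eps^{-2}\ell(Q))$ with $(n+1)$-coordinate at least $\eps^2\ell(Q)/2$, and every $\delta(Y_k)$ lies in the required window $[\eps^3\ell(Q), \eps^{-3}\ell(Q)]$, fulfilling Definition \ref{def2.11a}.

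The main obstacle is the circularity between \eqref{eq5.35} and the defining membership in $\tU^i_Q$: the flatness bound gives the geometric control needed to build the chain, yet it presupposes the chain already exists. The inductive construction above breaks this circularity by enlarging $\tU^i_Q$ one vertex at a time, each addition immediately unlocking \eqref{eq5.35} at the new vertex and supplying the altitude/flatness estimates needed for the next step; the comfortable gap $\eps^{M/2} \ll \eps^2$ (taking $M$ sufficiently large) ensures the iteration never degrades and the step sizes remain geometric.
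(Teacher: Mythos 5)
Your proposal is correct and follows essentially the same route as the paper: both arguments use the altitude-first (ascend on the $e_{n+1}$-axis, traverse horizontally at height $\sim\eps^{-2}\ell(Q)$ or higher, then descend) chain construction, with the $O(\log(1/\eps))$ chain length coming from geometric amplification of $\delta(Y_k)$ controlled by the flatness estimate \eqref{eq5.35}. Your explicit note about breaking the apparent circularity between \eqref{eq5.35} and $\tU^i_Q$-membership by adding vertices one at a time is a helpful elaboration of what the paper leaves implicit.
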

\begin{proof}[Proof of Claim \ref{claim5.40}]  Let $X\in \oo$.
We need to show that $X$ may be connected to $Y$ by a chain of
at most $\eps^{-1}$ balls of the form $B(Y_k,\delta(Y_k)/2)$, with
$\eps^3\ell(Q)\leq\delta(Y_k)\leq \eps^{-3}\ell(Q)$ (for convenience, we shall refer to such balls as
``admissible").   We first observe that
if $X = te_{n+1}$, with $\eps^{3} \ell(Q) \leq t\leq \eps^{-3}\ell(Q)$, then by an iteration argument using \eqref{eq5.35}
(with $M$ chosen large enough),
we may join $X$ to $Y$ by at most $C\log(1/\eps)$ admissible balls.  The
point $(2\eps)^{-3} \ell(Q) e_{n+1}$ may then be joined to any point of the form $(X', (2\eps)^{-3} \ell(Q))$
by a chain of at most $C$ admissible balls, whenever
$X'\in \rn$ with $|X'|\leq \eps^{-3}\ell(Q)$.  In turn,  the latter point may then be joined
to $(X', \eps^{3} \ell(Q))$ by at most $C\log(1/\eps)$ admissible balls.
\end{proof}

We note that Claim \ref{claim5.40} implies that
\begin{equation}\label{eq5.41}
 E\cap \oo =\emptyset\,.
\end{equation}
Indeed, $\oo\subset \tU^i_Q\subset \Omega$. Let $P_0$ denote the hyperplane
$$P_0:=\{Z:\,Z_{n+1} = 0\}\,.$$

\begin{claim}\label{claim5.42}
If $Z\in P_0$, with
$|Z-x_Q|\leq \frac32\,\eps^{-2}\ell(Q)$,  then
\begin{equation}\label{eq5.43}
\delta(Z)=\dist(Z, E) \leq 16 \eps^2\ell(Q)\,.
\end{equation}
\end{claim}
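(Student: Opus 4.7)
The plan is to argue by contradiction. Suppose there exists $Z_0 \in P_0$ with $|Z_0 - x_Q| \leq \tfrac{3}{2}\eps^{-2}\ell(Q)$ and $\delta(Z_0) > 16\eps^2 \ell(Q)$, so that $B(Z_0, 16\eps^2\ell(Q)) \cap E = \emptyset$. Set $X_+ := Z_0 + 8\eps^2\ell(Q)\, e_{n+1}$ and $X_- := Z_0 - 8\eps^2\ell(Q)\, e_{n+1}$, which lie in the upper and lower halves of this ball, respectively. The point $X_+$ satisfies $(X_+)_{n+1} = 8\eps^2\ell(Q) > \eps^2\ell(Q)$ and, for $\eps$ small, $|X_+ - x_Q| \leq \tfrac{3}{2}\eps^{-2}\ell(Q) + 8\eps^2\ell(Q) < 2\eps^{-2}\ell(Q)$, so $X_+ \in \mathcal{O} \subset \tU_Q^i$ by Claim \ref{claim5.40}. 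Since $B(Z_0, 16\eps^2\ell(Q))$ is a connected subset of $\ree\setminus E$ meeting $\Omega$ at $X_+$, the entire ball is contained in $\Omega$.

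The heart of the argument is to show that $X_-$ also lies in $\tU_Q^i$, so that the approximate-affine estimate \eqref{eq6.28a} is available there. Every $W$ in the concentric half-radius ball $\tfrac{1}{2}B(Z_0, 16\eps^2\ell(Q))$ has $\delta(W) \geq 8\eps^2\ell(Q)$, well inside the admissible range $[\eps^3\ell(Q), \eps^{-3}\ell(Q)]$ of Definition \ref{def2.11a}. Thus $X_+$ can be joined to $X_-$ by a uniformly bounded number of admissible balls of the form $B(W_k, \delta(W_k)/2)$ (each of radius $\geq 4\eps^2\ell(Q)$, together covering the total distance $|X_+ - X_-| = 16\eps^2\ell(Q)$). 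Concatenating this short chain with the chain from $Y$ to $X_+$ produced by Claim \ref{claim5.40} (whose length is $\lesssim \log(1/\eps)$) yields an admissible chain from $Y$ to $X_-$ whose total length is still well below $\eps^{-1}$ for $\eps < \eps_0$. Hence $X_- \in \tU_Q^i$.

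Finally, I would apply \eqref{eq6.28a} with $X = W = X_-$ (note that $X_- \in B_{X_-}$ trivially since $\delta(X_-) > 0$) to obtain
$$ u(X_-) \,\leq\, |\nabla u(Y)|\, (X_-)_{n+1} + C\eps^{M-7}\ell(Q) \,=\, -8\eps^2\ell(Q)\,|\nabla u(Y)| + C\eps^{M-7}\ell(Q), $$
which is strictly negative once $M$ is taken large enough, using $|\nabla u(Y)| \approx 1$ from \eqref{eq4.17a}. This contradicts $u \geq 0$ and hence proves \eqref{eq5.43}. The main technical point to verify carefully is the admissible-chain bookkeeping: the specific threshold $16\eps^2\ell(Q)$ is chosen precisely so that the Harnack chain from $Y$ can be extended across the plane $P_0$ without violating the $\eps^{-1}$-ball budget in Definition \ref{def2.11a}.
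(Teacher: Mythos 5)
Your proof is correct, and the underlying mechanism coincides with the paper's: apply the approximate-affine estimate \eqref{eq6.28a} at a point $W$ with $W_{n+1}<0$, so that $u(W)\geq 0$ forces $|\nabla u(Y)|\,|W_{n+1}|\lesssim \eps^{M-7}\ell(Q)$, which contradicts \eqref{eq4.17a} once $M$ is chosen large. Where you diverge from the paper is in how you bring such a point $W$ into the scope of \eqref{eq6.28a}. The paper takes $X\in\mathcal{O}\cap B(Z,2\eps^2\ell(Q))$ (so $X\in\tU_Q^i$ directly by Claim \ref{claim5.40}), observes that the contradiction hypothesis gives $\delta(X)\geq 14\eps^2\ell(Q)$, and hence that $B(Z,4\eps^2\ell(Q))\subset B_X$; then it simply picks $W$ with $W_{n+1}=-\eps^2\ell(Q)$ in that ball. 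You instead descend to $X_-=Z_0-8\eps^2\ell(Q)\,e_{n+1}$, show $X_-\in\tU_Q^i$ by splicing a short admissible chain from $X_+$ to $X_-$ onto the chain from $Y$ to $X_+$ produced in Claim \ref{claim5.40}, and then apply \eqref{eq6.28a} with $X=W=X_-$. Both routes are valid and lead to the same numbers; the paper's is marginally leaner because it stays at a point in $\mathcal{O}$ and lets the fattened ball $B_X$ dip below $P_0$ for free, whereas yours requires the chain-budget verification you correctly flag (and you should also note the implicit upper bound $\delta(W)\lesssim\eps^{-2}\ell(Q)\ll\eps^{-3}\ell(Q)$ along the segment, which follows from $0\in E$ and $|x_Q|\lesssim K_0\ell(Q)$). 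One minor remark: once $X_-\in\tU_Q^i$ is in hand, the observation that $B(Z_0,16\eps^2\ell(Q))\subset\Omega$ is not needed, since $u$ is extended by zero outside $\overline{\Omega}$ and hence $u\geq 0$ on all of $\ree\setminus E$.
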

\begin{proof}[Proof of Claim \ref{claim5.42}]
Observe that $B(Z, 2 \eps^2\ell(Q))$ meets $\oo$.  Then by Claim
\ref{claim5.40}, there is a point $X\in \tU^i_Q \cap B(Z, 2 \eps^2\ell(Q))$.
Suppose now that \eqref{eq5.43} is false, which in particular implies so that $\delta(X) \geq14\eps^2\ell(Q).$
Then $B(Z, 4 \eps^2\ell(Q))\subset B_X$, so by \eqref{eq6.28a}, we have
\begin{equation}\label{eq5.44}
\left|u(W)-|\nabla u(Y)|W_{n+1}\right|
\leq
C\, \eps^{M-7}\ell(Q)\,,\quad \forall \, W\in
B(Z, 4 \eps^2\ell(Q))\,.
\end{equation}
In particular, since $Z_{n+1}=0$, we may choose $W$ such that $W_{n+1} = -\eps^2\ell(Q)$,
to obtain that
$$|\nabla u(Y)|\, \eps^2\ell(Q) 
  \leq  C \eps^{M-7}\ell(Q)\,,$$
since $u\geq 0$.  But for $\eps<1/2$, and $M$ large enough, 
this is a contradiction, by
\eqref{eq4.17a} (recall that we have fixed $Y=Y_Q$).
\end{proof}

It now follows by Definition \ref{def2.13} that $Q$ satisfies the $\eps$-local WHSA
condition, with
$$P=P(Q):= \{Z: \, Z_{n+1}= \eps^2\ell(Q)\}\,,\quad H=H(Q):= \{Z: \, Z_{n+1}>\eps^2\ell(Q)\} \,.$$
This concludes the proof of Lemma \ref{LVlemma}.

\subsection{Proof of  Corollary \ref{c1}}\label{subs4} Corollary \ref{c1} follows almost immediately from Theorem \ref{t1}. Let $B=B(x, r)$ and
$\Delta=B\cap \pom$, with $x\in \pom$ and $0<r<\diam(\pom)$.  Let  $c$ be the constant
in Lemma \ref{Bourgainhm}.  By hypothesis,  
there is a point  $X_{\Delta}\in B\cap\Omega$ 
which is a corkscrew point relative to  $ \Delta$, 
that is, there is 
a uniform constant $c_0>0$ such that $\delta(X_\Delta)\ge c_0r$.  Thus, to apply
Theorem \ref{t1}, it remains only to verify hypothesis
$(\star)$. 
For a sufficiently large constant  $C_1$, set $\Delta^{fat}=\Delta(x,C_1r)$. Cover $\Delta^{fat}$ 
by a collection of surface balls $\{\Delta_i\}_{i=1}^N$ with $\Delta_i=B_i\cap \pom$,
and $ B_i:= B(x_i,c_0r/4)$, where $x_i\in \Delta^{fat}$ and 
where $N$ is uniformly bounded, depending only on $n,c_0, C_1$ and ADR. 
By construction, $X_\Delta\in\Omega\setminus 4B_i$, so by 
hypothesis, $\hm^{X_\Delta}\in$  weak-$A_\infty(2\Delta_i)$. Hence, $\hm^{X_\Delta}\ll\sigma$ in 
$2\Delta_i$, and \eqref{eqn:main-weak-RHP} holds with $Y=X_\Delta$, 
and with $\Delta'=\Delta_i$.  Consequently, $\hm^{X_\Delta}\ll\sigma$ in $\Delta^{fat}$, 
and if we write $k^{X_\Delta}=d\hm^{X_\Delta}/d\sigma$ we obtain
\begin{multline*}
\int_{\Delta^{fat}} k^{X_\Delta}(z)^q\, d\sigma(z)
\le
\sum_{i=1}^N\int_{\Delta_i}k^{X_\Delta}(z)^q\, d\sigma(z)
\lesssim
\sum_{i=1}^N
\sigma(\Delta_i)
\left(\fint_{2\Delta_i}k^{X_\Delta}(z)\, d\sigma(z)\right)^q
\\[4pt]
\lesssim
\sum_{i=1}^N
\sigma(2\Delta_i)^{1-q}\,\omega^{X_\Delta}(2\Delta_i)
\lesssim
\sigma(\Delta^{fat})^{1-q},
\end{multline*}
where in the last estimate we have used  the ADR property, the uniform boundedness of $N$, 
and the fact that $\omega^{X_\Delta}(2\Delta_i)\leq 1$. By Theorem \ref{t1}, 
it then follows that $\pom$ is UR as desired. \qed

\section{Proof of Proposition \ref{prop2.20}}\label{s6}
We here prove Proposition \ref{prop2.20}. We first observe that if $E$ is UR then it satisfies the so-called ``bilateral weak geometric lemma (BWGL)'' (see \cite[Theorem I.2.4, p.~32]{DS1}). In turn, in \cite[Section II.2.1, pp. 97]{DS1}, one can find a dyadic formulation of the BWGL as follows.  Given $\eps$ small enough and $k>1$ large to be chosen,  $\dd(E)$ can be split in two collections, one of  ``bad cubes'' and another of ``good cubes'', so that the ``bad cubes'' satisfy a packing condition and each ``good cube'' $Q$ verifies the following: there is a hyperplane $P=P(Q)$ such that $\dist(Z, E)\le \eps\,\ell(Q)$ for every $Z\in P\cap B(x_Q, k\,\ell(Q))$, and $\dist(Z, P)\le  \eps\,\ell(Q)$ for every $Z\in B(x_Q, k\,\ell(Q))\cap E$. In turn, this implies that $ B(x_Q, k\,\ell(Q))\cap E$ is sandwiched between to planes parallel to $P$ at distance $\eps\,\ell(Q)$. Hence, at that scale, we have a half-space (indeed we have two) free of $E$ and clearly the $2\,\eps$-local WHSA holds provided $K$ is taken of the order of $\eps^{-2}$ or larger. Further details are left to the interested reader. Thus we obtain the easy implication  UR $\implies$ WHSA.

The main part of the proof is to establish the opposite implication. To this end,
we assume that $E$ satisfies the WHSA property and 
we shall show that $E$ is UR.  Given a positive $\eps<\eps_0\ll K_0^{-6}$, we
let $\B_0$ denote the collection of bad cubes for which $\eps$-local WHSA fails.
By Definition \ref{def2.14}, $\B_0$ satisfies the Carleson packing condition
\eqref{eq2.pack2}.  We now introduce a variant of the packing measure for
$\B_0$.
We recall that $B^*_Q= B(x_Q, K_0^2\ell(Q)),$ and given $Q\in\dd(E)$, we set
\begin{equation}\label{eq5.8aa}
\dd_\eps(Q):=\left\{Q'\in\dd(E):\, \eps^{3/2}\ell(Q)\leq \ell(Q')\leq \ell(Q),\,Q'\,{\rm meets}\,\, B_Q^*
\right\}\,.
\end{equation}
Set
\begin{equation}\label{eq4.0}
\alpha_Q:=\left\{
\begin{array}{ll}
\sigma(Q)\,,&{\rm if}\,\, \B_0\cap \dd_\eps(Q)\neq \emptyset,
\\[6pt]
0\,,&{\rm otherwise},
\end{array}
\right.
\end{equation}
and define
\begin{equation}\label{eq4.1}
\mut(\dd'):= \sum_{Q\in\dd'}\alpha_Q\,,\qquad \dd'\subset\dd(E)\,.
\end{equation}
Then $\mut$ is a discrete Carleson measure, with
\begin{equation}\label{eq6.2}
\mut(\dd_{Q_0})\, =\sum_{Q\subset Q_0}\alpha_Q  \leq  C_\eps\,\sigma(Q_0)\,,\qquad Q_0\in \dd(E)\,.
\end{equation}
Indeed, note that
for any $Q'$, the cardinality of $\{Q:\, Q'\in \dd_\eps(Q)\}$, is uniformly bounded, depending on $n$,
$\eps$ and ADR, and that $\sigma(Q)\leq C_\eps\, \sigma(Q')$, if $Q'\in\dd_\eps(Q)$.   Then
given any
$Q_0\in\dd(E)$,
\begin{multline*}
\mut(\dd_{Q_0})\, =\sum_{Q\subset Q_0:\,\B_0\cap\dd_\eps(Q)\neq\emptyset}\sigma(Q)\,
\leq
 \sum_{Q'\in \B_0}\,\sum_{Q\subset Q_0:\, Q'\in \dd_\eps(Q)}\sigma(Q) \\
\leq
 C_\eps\!\!\sum_{Q'\in \B_0:\, Q' \subset 2B_{Q_0}^*}\sigma(Q')  \leq \C_\eps\, \sigma(Q_0)\,,
\end{multline*}
by \eqref{eq2.pack2} and ADR.

To prove Proposition \ref{prop2.20}, we are required to show that
the collection $\B$ of bad cubes for which the $\sqrt{\eps}$-local BAUP condition fails, satisfies a packing
condition.  That is, we shall establish the discrete Carleson measure estimate
\begin{equation}\label{eq6.3}
\mutt(\dd_{Q_0})\, =\sum_{Q\subset Q_0:\, Q\in\B}\sigma(Q)  \leq  C_\eps\,\sigma(Q_0)\,,\qquad Q_0\in \dd(E)\,.
\end{equation}
To this end,  by \eqref{eq6.2}, it suffices to show that if
$Q\in \B$, then $\alpha_Q\neq 0$ (and thus $\alpha_Q=\sigma(Q)$, by definition).
In fact, we shall prove the contrapositive statement.
\begin{claim}\label{claim6.15} Suppose then that $\alpha_Q = 0$.
Then $\sqrt{\eps}$-local BAUP condition holds for $Q$.
\end{claim}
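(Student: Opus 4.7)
The plan is to build the $\sqrt{\eps}$-local BAUP family $\mathcal{P}$ at $Q$ directly from the WHSA hyperplanes $P(Q')$ associated to subcubes $Q'\in\dd_\eps(Q)$ at the bottom of the allowed range of scales: take $\ell_0$ to be the dyadic length closest to $\eps^{3/2}\ell(Q)$ from above, and set
\[
\mathcal{P}:=\{P(Q'):\ Q'\in\dd(E),\ \ell(Q')=\ell_0,\ Q'\cap B_Q^*\neq\emptyset\}.
\]
Since the hypothesis $\alpha_Q=0$ is precisely $\B_0\cap\dd_\eps(Q)=\emptyset$, every such $Q'$ enjoys the $\eps$-local WHSA with parameter $K_0$, so $P(Q')$ is well defined. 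Only parts (1) and (2) of Definition \ref{def2.13} will be used; the half-space condition (3) will play no role here.

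To verify the first BAUP requirement I would fix $x\in 10Q$; since $10Q\subset B_Q^*$ (valid as soon as $K_0^2\gg C_*$), the dyadic cube $Q'\ni x$ of length $\ell_0$ satisfies $Q'\cap B_Q^*\neq\emptyset$, hence $P(Q')\in\mathcal{P}$. Part (2) of WHSA places $Q'$ within $K_0^{3/2}\ell_0$ of $P(Q')$, so
\[
\dist(x,P(Q'))\lesssim K_0^{3/2}\ell_0\lesssim K_0^{3/2}\eps^{3/2}\ell(Q)\leq \sqrt{\eps}\,\ell(Q),
\]
where the last inequality uses $\eps<\eps_0\ll K_0^{-6}$ to guarantee $K_0^{3/2}\eps\leq 1$.

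To verify the second BAUP requirement I would take an arbitrary $Z\in P(Q')\cap B(x_Q,10\diam Q)$ for some $P(Q')\in\mathcal{P}$. The key observation is that the ``good'' WHSA ball $B_{Q'}^{**}(\eps)=B(x_{Q'},\eps^{-2}\ell_0)$ has radius at least $\eps^{-1/2}\ell(Q)$, which vastly exceeds $10\diam Q$. Using $Q'\cap B_Q^*\neq\emptyset$ to obtain $|x_{Q'}-x_Q|\lesssim K_0^2\ell(Q)$, together with the triangle inequality, I get $|Z-x_{Q'}|\lesssim K_0^2\ell(Q)\leq \eps^{-2}\ell(Q')$, once again invoking $\eps\ll K_0^{-6}$. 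Hence $Z\in B_{Q'}^{**}(\eps)$, and part (1) of WHSA delivers
\[
\dist(Z,E)\leq \eps\ell(Q')\lesssim \eps^{5/2}\ell(Q)\ll \sqrt{\eps}\,\ell(Q).
\]

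There is no real analytic obstacle: the argument is essentially a bookkeeping check that the smallness relations between $\eps$, $\sqrt{\eps}$, $K_0$, and the WHSA scale $\eps^{-2}$ fit together with the correct margins. The only thing to monitor carefully is that $\eps_0\ll K_0^{-6}$ comfortably absorbs both the $K_0^{3/2}$ factor needed for the $\sqrt{\eps}$-tolerance of the BAUP planes and the $K_0^2$ factor needed to place $Z$ inside $B_{Q'}^{**}(\eps)$; both are supplied by that single hypothesis.
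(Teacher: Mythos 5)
Your proof is correct, and it takes a genuinely different (and simpler) route than the paper. The paper's argument normalizes coordinates via the hyperplane $P(Q)$, then splits into two cases and, in the nontrivial case, selects a \emph{single} additional cube $Q'$ at the intermediate scale $\eps^{3/4}\ell(Q)$ and proves that the two planes $P(Q)$ and $P(Q')$ together achieve the $\sqrt{\eps}$-BAUP; the main technical burden is the angle estimate $\theta\lesssim K_0^{3/2}\eps$ for the unit normal $\nu'$ (Claim \ref{claim:nu'-down}) and the delicate geometric Claim \ref{claim5.18}, both of which rely essentially on the half-space condition (3) of Definition \ref{def2.13}. You, by contrast, take \emph{all} hyperplanes $P(Q')$ for cubes $Q'$ of a single dyadic length $\ell_0\approx\eps^{3/2}\ell(Q)$ meeting $B_Q^*$ (exactly the bottom scale of $\dd_\eps(Q)$), and verify both halves of the $\sqrt{\eps}$-BAUP by bare bookkeeping: part (a) follows from WHSA (2) plus $\diam Q'$, losing a factor $K_0^{3/2}$ that is absorbed because $\eps\ll K_0^{-6}$, and part (b) follows from WHSA (1) because $B(x_Q,10\diam Q)$ is deep inside every $B_{Q'}^{**}(\eps)$ by the same smallness of $\eps$ relative to $K_0$. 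You never invoke the half-space condition, and you correctly observe that $10Q\subset B_Q^*$ (requiring only that $K_0^2$ dominate $10C_*$, which is implied by $K_0$ being chosen large). The one thing your route forgoes is the quantitatively sharper conclusion noted parenthetically at the end of the paper's proof — BAUP realized by a union of at most two planes — but that refinement is not needed to invoke Theorem \ref{t2.7}, since Definitions \ref{def2.4}--\ref{def2.5} place no bound on the cardinality of $\mathcal{P}$. So your argument proves the claim and, for this purpose, is a clean simplification; the paper's more involved approach buys the extra information about the number of approximating planes.
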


\begin{proof}[Proof of Claim \ref{claim6.15}]
We first note that since $\alpha_Q=0$, then
by definition of $\alpha_Q$,
\begin{equation}\label{eq6.14}
\B_0\cap \dd_\eps(Q) =\emptyset\,.
\end{equation}
Thus, the $\eps$-local WHSA condition (Definition \ref{def2.13})
holds for every $Q'\in \dd_\eps(Q)$ (in particular,
for $Q$ itself).  By rotation and translation, we may suppose that the hyperplane $P= P(Q)$ in Definition
\ref{def2.13} is
$$P = \left\{Z\in \ree:\, Z_{n+1} = 0\right\}\,,$$
and that the half-space $H=H(Q)$ is
the upper half-space $\reu = \{Z:\, Z_{n+1}>0\}$.
We recall that by Definition \ref{def2.13}, $P$ and $H$ satisfy
\begin{equation}\label{eq6.16}
\dist(Z,E)\leq\eps\ell(Q)\,, \qquad \forall \, Z\in P\cap B_Q^{**}(\eps)\,.
\end{equation}
\begin{equation}\label{eq6.16a}
\dist(P,Q)\leq K_0^{3/2}\ell(Q)\,,
\end{equation}
and
\begin{equation}\label{eq6.17a}
H\cap B_Q^{**}(\eps)\cap E=\emptyset\,.
\end{equation}
The proof will now follow by a construction similar to the construction in \cite{LV-2007}. In  \cite{LV-2007} the authors used the construction  to establish the Weak Exterior Convexity condition.
By \eqref{eq6.17a}, there are two cases.

\smallskip

\noindent {\bf Case 1}:  $10Q \subset \{ Z:\, -\sqrt{\eps}\ell(Q) \leq Z_{n+1} \leq 0\}$.
In this case, the $\sqrt{\eps}$-local BAUP condition holds trivially for $Q$, with $\P =\{P\}$.

\smallskip

\noindent{\bf Case 2}.  There is a point $x\in 10Q$ such that $x_{n+1}<-\sqrt{\eps}\ell(Q)$.
In this case, we choose $Q'\ni x$, with $\eps^{3/4}\ell(Q)\leq \ell(Q') < 2\eps^{3/4}\ell(Q).$
Thus,
\begin{equation}\label{eq6.19aa}
Q'\subset \big\{Z:\, Z_{n+1}\leq -\frac12\sqrt{\eps}\ell(Q)\big\}\,.
\end{equation}
Moreover, $Q'\in\dd_\eps(Q)$, so by \eqref{eq6.14}, $Q'\notin \B_0$, i.e.,
$Q'$ satisfies the $\eps$-local WHSA.  Let $P'=P(Q')$, and $H'=H(Q')$ denote the
hyperplane and half-space corresponding to $Q'$ in Definition \ref{def2.13}, so that
\begin{equation}\label{eq6.18}
\dist(Z,E)\leq\eps\ell(Q')\leq 2 \eps^{7/4}\ell(Q)\,, \quad \forall \, Z\in P'\cap B_{Q'}^{**}(\eps)\,,
\end{equation}
\begin{equation}\label{eq6.18a}
\dist(P',Q')\leq K_0^{3/2}\ell(Q')\approx K_0^{3/2} \eps^{3/4}\ell(Q) \ll\eps^{1/2}\ell(Q)
\end{equation}
(where the last inequality holds since $\eps \ll K_0^{-6}\,$), and
\begin{equation}\label{eq6.19}
H'\cap B_{Q'}^{**}(\eps)\cap E=\emptyset\,,
\end{equation}
where we recall that $B_{Q'}^{**}(\eps) := B\left(x_{Q'},\eps^{-2}\ell(Q')\right)$
(see \eqref{eq2.bstarstar}).
We note that
\begin{equation}\label{eq6.22a}
B_Q^*\subset \tB_Q(\eps):= B\left(x_Q, \eps^{-1}\ell(Q)\right) \subset B_{Q'}^{**}(\eps)\cap B_Q^{**}(\eps)\,,
\end{equation}
by construction, since $\eps\ll K_0^{-6}$.
Let $\nu'$ denote the unit normal vector to $P'$, pointing into $H'$. Note that
by  \eqref{eq6.17a}, \eqref{eq6.18}, and the definition of $H$,
\begin{equation}\label{eqn:above}
P'\cap \tB_Q(\eps)\cap\{Z:Z_{n+1} >2\eps^{7/4}\,\ell(Q)\}=\emptyset\,.
\end{equation}
Moreover, $\nu'$ points ``downward", i.e.,
$\nu'\cdot e_{n+1} <0$, otherwise $H'\cap \tB_Q(\eps)$ would meet $E$, by \eqref{eq6.16},
\eqref{eq6.19aa}, and \eqref{eq6.18a}.  More precisely, we have the following.

\begin{claim}\label{claim:nu'-down}
The angle $\theta$
between $\nu'$ and $-e_{n+1}$ satisfies $0\le \theta \approx \sin\theta \lesssim \eps$.
\end{claim}

Indeed, since $Q'$ meets $10Q$,
 \eqref{eq6.16a} and \eqref{eq6.18a} imply that
 $\dist(P,P')\lesssim K_0^{3/2} \ell(Q)$, and that the latter estimate is attained near $Q$.
By \eqref{eqn:above} and a trigonometric argument,
one then obtains Claim \ref{claim:nu'-down}
(more precisely, one obtains
$\theta \lesssim K_0^{3/2} \eps$, but in this section,
we continue to use the notational convention that
implicit constants may depend upon $K_0$, but $K_0$ is fixed, and  $\eps \ll K_0^{-6}\,$).
The interested reader could probably supply the remaining details of the argument that we have just sketched,
but for the sake of completeness, we shall give the full proof at the end of this section.

We therefore take Claim  \ref{claim:nu'-down} for granted,
and proceed with the argument.  We note
first that every point in $(P\cup P')\cap B_Q^*$ is at a distance at most $\eps\ell(Q)$ from $E$,
by \eqref{eq6.16}, \eqref{eq6.18} and \eqref{eq6.22a}.
To complete the proof of Claim \ref{claim6.15}, it therefore remains
only to verify the following.  As with the previous claim, we shall provide a condensed
proof immediately, and present a more detailed argument
at the end of the section.
\begin{claim}\label{claim5.18}
Every point in $10Q$ lies within $\sqrt{\eps}\ell(Q)$
of a point in $P\cup P'$.
\end{claim}
Suppose not.   We could then repeat the previous argument,
to construct a cube $Q''$,
a hyperplane $P''$, a unit vector $\nu''$ forming a small angle with $-e_{n+1}$,
and a half-space $H''$ with boundary $P''$,
with the same properties as  $Q',P', \nu'$ and $H'$.  In particular, we have the respective
analogues of \eqref{eq6.18a} and \eqref{eq6.19},
namely
\begin{equation}\label{eq6.23}
\dist(P'',Q'')\leq K_0^{3/2}\ell(Q')\approx K_0^{3/2} \eps^{3/4}\ell(Q) \ll\eps^{1/2}\ell(Q)\,,
\end{equation}
and
\begin{equation}\label{eq6.25a}
H''\cap B_{Q''}^{**}(\eps)\cap E=\emptyset\,,
\end{equation}
Also, we have the analogue of \eqref{eq6.19aa}, with $Q'',P'$ in place of $Q',P$, thus
\begin{equation}\label{eq6.24}
\dist(Q'',P') \geq \frac12 \sqrt{\eps}\ell(Q)\,,\quad {\rm and}\quad Q''\cap H'=\emptyset\,,
\end{equation}
In addition, as in \eqref{eq6.22a}, we also have $B_Q^*\subset B_{Q''}^{**}(\eps)$.
On the other hand, the angle between $\nu'$ and $\nu''$
is very small.   Thus,
combining \eqref{eq6.18}, \eqref{eq6.23} and \eqref{eq6.24},
we see that  $H''\cap B_Q^*$ captures points in $E$, which contradicts
\eqref{eq6.25a}.

Claim \ref{claim6.15} therefore holds (in fact, with a union of at most 2 planes),
and thus we obtain the conclusion of Proposition \ref{prop2.20}.
 \end{proof}

We now provide detailed proofs of Claims \ref{claim:nu'-down} and \ref{claim5.18}.

\begin{proof}[Proof  of Claim \ref{claim:nu'-down}]
By \eqref{eq6.18a} we can pick $x'\in Q'$, $y'\in P'$ such that $|y'-x'|\ll \eps^{1/2}\,\ell(Q)$ and therefore $y'\in 11\,Q$. Also, from \eqref{eq6.16a} and \eqref{eq6.17a} we can find $\bar{x}\in Q$ such that $-K_0^{3/2}\,\ell(Q)<\bar{x}_{n+1}\le 0$. This and \eqref{eq6.19aa} yield
\begin{equation}\label{loc-y'}
-2\,K_0^{3/2}\,\ell(Q)<y'_{n+1}<-\frac14\,\sqrt{\eps}\,\ell(Q).
\end{equation}
 Let $\pi$ denote the orthogonal projection onto $P$. Let $Z\in P$ (i.e., $Z_{n+1}=0$) be such that $|Z-\pi(y')|\le K_0^{3/2}\,\ell(Q)$. Then, $Z\in B(x_Q,4\,K_0^{3/2}\,\ell(Q))\subset B_Q^*$. Hence $Z\in P\cap B_Q^{**}(\eps)$ and by \eqref{eq6.16}, $\dist(Z,E)\le\eps\,\ell(Q)$. Then there exists $x_Z\in E$ with $|Z-x_Z|\le \eps\,\ell(Q)$ which in turn implies that $|(x_Z)_{n+1}|\le \eps\,\ell(Q)$. Note that $x_Z\in B(x_Q,5\,K_0^{3/2}\,\ell(Q))\subset B_Q^*$ and by \eqref{eq6.22a}, $x_Z\in E \cap B_Q^{**}(\eps)\cap B_{Q'}^{**}(\eps)$. This,  \eqref{eq6.17a} and \eqref{eq6.19} imply that $x_Z\not\in H\cup H'$. Hence, $(x_Z)_{n+1}\le 0$ and  $(x_Z-y')\cdot\nu'\le 0$, since $y'\in P'$ and $\nu'$ denote the unit normal vector to $P'$ pointing into $H'$. Using \eqref{loc-y'} we observe that
\begin{equation}\label{x-y'}
\frac18\,\sqrt{\eps}\,\ell(Q)
<
-\eps\,\ell(Q)
+
\frac14\,\sqrt{\eps}\,\ell(Q)
<
(x_Z-y')_{n+1}
<2\,K_0^{3/2}\,\ell(Q)
\end{equation}
and that
\begin{align}\label{inner-prod}
(x_Z-y')_{n+1}\,\nu'_{n+1}
&\le
-\pi(x_Z-y')\cdot \pi(\nu')\\
&\le
|x_Z-z|-\pi(Z-y')\cdot \pi(\nu')
\le
\eps\,\ell(Q)-\pi(Z-y')\cdot \pi(\nu').\nonumber
\end{align}
We shall prove that $\nu'_{n+1}<-\frac18<0$ by considering two cases:

\noindent {\bf Case 1:} $|\pi(\nu')|\ge \frac12$. We pick
$$
Z_1=\pi(y')+K_0^{3/2}\,\ell(Q)\,\frac{\pi(\nu')}{|\pi(\nu')|}.
$$
By construction $Z_1\in P$ and  $|Z_1-\pi(y')|\le K_0^{3/2}\,\ell(Q)$. Hence we can use \eqref{inner-prod} with $Z_1$
\begin{multline*}
(x_{Z_1}-y')_{n+1}\,\nu'_{n+1}
\le
\eps\,\ell(Q)-\pi(Z_1-y')\cdot \pi(\nu')
\\
=
\eps\,\ell(Q)-K_0^{3/2}\,\ell(Q)\,|\pi(\nu')|
\le
-\frac14\,K_0^{3/2}\,\ell(Q).
\end{multline*}
This together with \eqref{x-y'} give that $\nu'_{n+1}<-1/8<0$.

\smallskip

\noindent {\bf Case 2:} $|\pi(\nu')|< \frac12$. This case is much simpler. Note first that $|\nu'_{n+1}|^2=1-|\pi(\nu')|^2>3/4$ and thus either $\nu'_{n+1}<-\sqrt{3}/2$ or $\nu'_{n+1}>\sqrt{3}/2$. We see that the second scenario leads to a contradiction. Assume then that $\nu'_{n+1}>\sqrt{3}/2$. We take $Z_2=\pi(y')\in P$ which clearly satisfies  and  $|Z_2-\pi(y')|\le K_0^{3/2}\,\ell(Q)$. Again \eqref{inner-prod} and \eqref{x-y'} are applicable with $Z_2$
$$
\frac18\,\sqrt{\eps}\,\ell(Q)\,\frac{\sqrt{3}}{2}
<
(x_{Z_2}-y')_{n+1}\,\nu_{n+1}'
\le
\eps\,\ell(Q)
\ll
\sqrt{\eps}\,\ell(Q),
$$
and we get a contradiction. Hence necessarily $\nu'_{n+1}\le -\sqrt{3}/{2}<-1/8<0$.

\smallskip

Having proved that  $\nu'_{n+1}<-1/8<0$ we estimate $\theta$, the angle between $\nu'$ and $-e_{n+1}$. Note first $\cos\theta=-\nu'_{n+1}>1/8$. If $\cos\theta=1$ (which occurs if $\nu'=-e_{n+1}$) then $\theta=\sin\theta=0$ and the proof is complete. Assume then that $\cos\theta\neq 1$ in which case $1/8<-\nu'_{n+1}<1$ and hence $|\pi(\nu')|\neq 0$. Pick
$$
Z_3
=
y'+\frac{\ell(Q)}{2\,\eps}\,
\hat{\nu}',
\qquad\qquad
\hat{\nu}'=
\frac{e_{n+1}-\nu'_{n+1}\,\nu'}{|\pi(\nu')|}.
$$
 Then $\hat{\nu}'\cdot\nu'=0$ and hence $Z_3\in P'$ as $y'\in P'$. Also $|\hat{\nu}'|=1$ and therefore $|Z_3-y'|=\ell(Q)/(2\,\eps)$. This in turn gives that $Z_3\in \tB_Q(\eps)$. We have obtained that $Z_3\in P'\cap\tB_Q(\eps)$, and hence $(Z_3)_{n+1}\le 2\eps^{7/4}\,\ell(Q)$ by \eqref{eqn:above}. This and \eqref{x-y'} applied to $Z_3$ easily give
\begin{multline*}
4\,K_0^{3/2}
\,\ell(Q)
\ge
2\eps^{7/4}\,\ell(Q)
\ge
(Z_3)_{n+1}
=
y'_{n+1}
+
\frac{\ell(Q)}{2\,\eps}\,\frac{1-(\nu'_{n+1})^2}{|\pi(\nu')|}\\
=
y'_{n+1}
+
\frac{\ell(Q)}{2\,\eps}\,|\pi(\nu')|
\ge
-2\,K_0^{3/2}\,\ell(Q)+
\frac{\ell(Q)}{2\,\eps}\,|\pi(\nu')|.
\end{multline*}
This readily yields $|\sin \theta|=|\pi(\nu')|\le 8\,K_0^{3/2}\,\eps$ and the proof is complete.
\end{proof}

\begin{proof}[Proof  of Claim \ref{claim5.18}]
We want to prove that every point in $10Q$
lies within $\sqrt{\eps}\ell(Q)$
of a point in $P\cup P'$. We will argue by contradiction and hence we  assume that there exists $x'\in 10Q$ with $\dist(x',P\cup P')> \sqrt{\eps}  \,\ell(Q)$. In particular,
$x'_{n+1}<-\sqrt{\eps}\,\ell(Q)$
and as observed above, we may repeat the previous argument,
to construct a cube $Q''$, a hyperplane $P''$, a unit vector $\nu''$ forming a small angle with $-e_{n+1}$,
and a half-space $H''$ with boundary $P''$,
with the same properties as  $Q',P', \nu'$ and $H'$, namely \eqref{eq6.23}, \eqref{eq6.24} and
\eqref{eq6.25a}.
Also,  
$$
\sqrt{\eps}\,\ell(Q)
\le
\dist(x',P')
\le
\diam(Q'')+\dist(Q'', P')
\le
\frac12\, \sqrt{\eps} \,\ell(Q)+\dist(Q'', P')\,,
$$
and, in addition, as in \eqref{eq6.22a}, we have $B_Q^*\subset B_{Q''}^{**}(\eps)$.

By \eqref{eq6.23} there is $y''\in Q''$ and $z'' \in P''$ such that $|y''-z''|\ll \eps^{1/2}\,\ell(Q)$. By \eqref{eq6.25a} $y''\not\in H'$. Write $\pi'$ to denote the orthogonal projection onto $P'$ and note that \eqref{eq6.24} give $\dist(y'',P')=|y''-\pi'(y'')|\ge \frac12 \sqrt{\eps}\ell(Q)$. Note also that
\begin{multline*}
|y''-\pi'(y'')|
=
\dist (y'', P')\\
\le
|y''-x'|+|x'-x|+\diam(Q')+\dist(Q',P')
\le 11\diam(Q)
\end{multline*}
and that
$$
|\pi'(y'')-x_Q|
\le
|\pi'(y'')-y''|+|y''-x'|+|x'-x_Q|
<22\diam(Q)
<K_0^2\,\ell(Q).
$$
Hence $\pi'(y'')\in B_Q^*\subset\tB_Q(\eps)$ and since $\pi'(y'')\in P'$ we have that \eqref{eq6.18} gives that there is $\tilde{y}\in E$ with $|\pi'(y'')-\tilde{y}|\le 2\,\eps^{7/4}\,\ell(Q)$. Then $\tilde{y}\in 23 Q\subset B_Q^*\cap E$ and $|\tilde{y}-z''|<12\,\diam(Q)$. To complete our proof we just need to show that $\tilde{y}\in H''$ which contradicts \eqref{eq6.25a}.

We now prove that $\tilde{y}\in H''$. Write $\nu''$ to denote the unit normal vector to $P''$, pointing into $H''$ and let us momentarily assume that
\begin{equation}\label{nu'-nu''}
|\nu'-\nu''|\le 16\,\sqrt{2}\,K_0^{2/3}\,\eps.
\end{equation}
We then obtain, recalling that $y''\not\in H'$, that
\begin{align*}
\frac12\,\sqrt{\eps}\,\ell(Q)
&\le
|y''-\pi'(y'')|
=
(\pi'(y'')-y'')\cdot\nu'\\
&\le
|\pi'(y'')-\tilde{y}|
+
|\tilde{y}-z''|\,|\nu'-\nu''|+(\tilde{y}-z'')\cdot\nu''
+
|z''-y''|\\
&<
\frac14\,\sqrt{\eps}\,\ell(Q)
+
(\tilde{y}-z'')\cdot\nu''.
\end{align*}
This immediately gives that $(\tilde{y}-z'')\cdot\nu''>\frac14\,\sqrt{\eps}\,\ell(Q)>0$ and hence $\tilde{y}\in H''$ as desired. Hence to complete the proof we have to prove \eqref{nu'-nu''}. To start the proof, we first note that if $|\alpha|<\pi/4$, then
$$
1-\cos\alpha
=
1-\sqrt{1-\sin^2 \alpha}
\le
\sin^2\alpha.
$$
In particular, we can apply this to $\theta$ (resp. $\theta'$), which is the angle between $\nu'$ (resp. $\nu''$) and $-e_{n+1}$, and as we shows that $|\sin \theta|$, $|\sin\theta'|\le 8\,K_0^{3/2}\,\eps$, we see that
$$
\sqrt{1-\cos\theta}
+
\sqrt{1-\cos\theta'}
\le
16\,K_0^{3/2}\,\eps
$$
Using the trivial formula
$$
|a-b|^2=2(1-a\dot b),
\qquad
\forall\, a,b\in\ree,\quad |a|=|b|=1.
$$
we conclude that
\begin{align*}
|\nu'-\nu''|
&\le
|\nu'-(-e_{n+1})|+|(-e_{n+1})-\nu''|\\
&=
\sqrt{2\,(1+\nu'\,e_{n+1})}
+
\sqrt{2\,(1+\nu''\,e_{n+1})}\\
&=
\sqrt{2\,(1-\cos\theta)}
+
\sqrt{2\,(1-\cos\theta')}
\le
16\,\sqrt{2}\,K_0^{3/2}\,\eps.
\end{align*}
This proves \eqref{nu'-nu''} and hence the proof of Claim \ref{claim5.18} is complete.
\end{proof}

\parskip=0.1cm

\end{document}